\documentclass[12pt,twoside]{amsart}
\usepackage{geometry}
\usepackage{amssymb,amsmath,amsthm, amscd, enumerate, mathrsfs}
\usepackage{graphicx, hhline}
\usepackage[all]{xy}
\usepackage{color}

\usepackage{iftex}

\ifPDFTeX
  \usepackage{hyperref}
\else
  \usepackage[dvipdfmx]{hyperref}
\fi

\title{Notes on rational chain connectedness}
\author{Osamu Fujino}
\date{2026/7/15, version 0.16}
\subjclass[2020]{Primary 14E30; Secondary 14J17, 32S05}
\keywords{complex analytic spaces, rational chain connectedness, rational connectedness, 
kawamata log terminal singularities, minimal model program} 
\address{Department of 
Mathematics, Graduate School of Science, 
Kyoto University, Kyoto 606-8502, Japan}
\email{fujino@math.kyoto-u.ac.jp}
\DeclareMathOperator{\Nklt}{Nklt}
\DeclareMathOperator{\Supp}{Supp}
\DeclareMathOperator{\Exc}{Exc}

\newtheorem{thm}{Theorem}[section]
\newtheorem{lem}[thm]{Lemma}
\newtheorem{cor}[thm]{Corollary}
\newtheorem{prop}[thm]{Proposition}

\theoremstyle{definition}
\newtheorem{defn}[thm]{Definition}
\newtheorem{rem}[thm]{Remark}
\newtheorem{ex}[thm]{Example}
\newtheorem*{ack}{Acknowledgments}  
\newtheorem{step}{Step}

\makeatletter
  
  \@addtoreset{equation}{section}
\makeatother
\begin{document}

\begin{abstract}
We extend Hacon--M\textsuperscript{c}Kernan's 
rational chain connectedness theorem to the 
complex analytic setting. 
As a consequence, we prove that the fibers of any resolution of singularities 
of complex analytic kawamata log terminal singularities 
are rationally chain connected. 
In contrast to the original approach, we avoid the use of extension theorems 
and instead rely on the minimal model program.
\end{abstract}

\maketitle 

\tableofcontents 

\section{Introduction}\label{p-sec1}

In this paper, we generalize the main results of \cite{hacon-mckernan} to
projective morphisms between complex analytic spaces.
The overall strategy of the proofs follows the same lines as in the
algebraic case. A few remarks are in order. First, \cite{hacon-mckernan} was written
before \cite{bchm}, and the technically most demanding part of the
proof relies on the extension theorem established in
\cite{hacon-mckernan-b}. This extension theorem is so intricate that
even experts may find it difficult to remember all of its statements. In this paper, we avoid using the extension theorem and instead rely on
the minimal model program developed in \cite{fujino-minimal}.
Of course, the minimal model program itself ultimately depends, at
least in part, on extension theorems; thus, we are not entirely free
from them. Nevertheless, our approach, which only uses standard
arguments from the minimal model program, is arguably more
accessible to the reader. While we work in the setting of complex analytic spaces, in the
algebraic case, one can similarly obtain a proof of
\cite{hacon-mckernan} that superficially avoids the use of extension
theorems by applying \cite{bchm}. We emphasize once again that the existence of flips established
in \cite{hacon-mckernan-flips} still relies heavily on the
extension theorem. 
We hope that this paper makes Hacon--M\textsuperscript{c}Kernan's results on rational chain connectedness more accessible to readers. 

In Theorem \ref{p-thm5.1}, we extend \cite[Theorem 5.1]{hacon-mckernan} 
to the complex analytic setting. 
For convenience, we refer to \cite[Theorem 5.1]{hacon-mckernan} as 
{\em{Hacon--M\textsuperscript{c}Kernan's rational chain connectedness theorem}}; 
it constitutes the principal result of \cite{hacon-mckernan}. 
Since the precise formulation of Theorem \ref{p-thm5.1} is rather 
technical and lengthy, we refrain from reproducing it here. 
As an immediate consequence, we obtain the following theorem.

\begin{thm}[{\cite[Theorem~1.2]{hacon-mckernan}}]\label{p-thm1.1}
Let $X$ be a normal complex variety and let $\Delta$ be an effective
$\mathbb R$-divisor on $X$ such that $K_X+\Delta$ is $\mathbb R$-Cartier.
Let $f \colon X \to S$ be a projective morphism between complex analytic spaces.
Assume that $-K_X$ is $f$-big and that $-(K_X+\Delta)$
is $f$-semiample. 
Let $g \colon Y \to X$ be any bimeromorphic morphism, and let
$\pi \colon Y \to S$ be the induced morphism.
Then the connected components of every fiber of $\pi$ are rationally
chain connected modulo the inverse image of the non-klt locus of $(X,\Delta)$.
\end{thm}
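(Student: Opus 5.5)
The plan is to deduce Theorem \ref{p-thm1.1} from the analytic version of Hacon--M\textsuperscript{c}Kernan's rational chain connectedness theorem, Theorem \ref{p-thm5.1}, following the reduction in \cite{hacon-mckernan}. The statement is local over $S$. So I fix a point $s\in S$ and a relatively compact Stein open neighborhood $U$ of $s$, and replace $S$ by $U$. Projectivity of $f$ then gives $f$-ample divisors, and Cartier divisors can be used freely over $U$. The conclusion only concerns the fiber $\pi^{-1}(s)$, so this shrinking costs nothing.

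\textbf{Step 1: replace the boundary by an ample perturbation.} Since $-(K_X+\Delta)$ is $f$-semiample, after shrinking $S$ I can choose a general effective $\mathbb R$-divisor $H\sim_{\mathbb R,f}-(K_X+\Delta)$ such that $(X,\Delta+H)$ has the same non-klt locus as $(X,\Delta)$ (Bertini for a semiample linear system) and $K_X+\Delta+H\sim_{\mathbb R,f}0$. Since $-K_X$ is $f$-big, I can write $-K_X\sim_{\mathbb R,f}A+E$ with $A$ $f$-ample and $E\ge 0$. Put
\[
\Theta=(1-\varepsilon)(\Delta+H)+\varepsilon E+\varepsilon A'
\]
for $0<\varepsilon\ll1$ and a general $A'\sim_{\mathbb R,f}A$. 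Then $K_X+\Theta\sim_{\mathbb R,f}0$. Also $\Nklt(X,\Theta)\subseteq\Nklt(X,\Delta)\cup\Supp E$, and I want to check that adding $\varepsilon E$ does not create new non-klt centers away from $\Nklt(X,\Delta)$ once $\varepsilon$ is small. This needs care: a small multiple of $E$ is harmless only where $(X,\Delta+H)$ is klt. If that is not automatic, I will instead keep $E$ inside the data that Theorem \ref{p-thm5.1} allows. The point of this step is to reach a pair in which $-K_X$ is $f$-big and the boundary is numerically trivial relative to $S$. Presumably Theorem \ref{p-thm5.1} is stated for exactly this configuration, namely a bimeromorphic model together with a log pair with $K+\Theta\equiv_f 0$.

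\textbf{Step 2: pass to $Y$.} Replace $g\colon Y\to X$ by a resolution $Y'\to Y$ that is projective over $X$ and log resolves $(X,\Theta)$. Images of rational chains are rational chains, and the image of a connected component of a fiber of $Y'\to S$ is a connected component of a fiber of $\pi$ (since $Y'\to Y$ is surjective with connected fibers over a normal $Y$; if $Y$ is not normal, I first pass to its normalization). So it suffices to treat $Y'$. Write $K_{Y'}+\Gamma=h^*(K_X+\Theta)$ for the composite $h\colon Y'\to X$. Here $\Gamma$ may have negative coefficients, but its components with coefficient $\ge1$ map into $\Nklt(X,\Theta)$.

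\textbf{Step 3: apply Theorem \ref{p-thm5.1}.} Feed $Y'\to S$ with the data $(\Gamma,\ -K_X \text{ big})$ into Theorem \ref{p-thm5.1}. Its conclusion is that connected components of fibers of $Y'\to S$ are rationally chain connected modulo $h^{-1}\Nklt(X,\Theta)\subseteq\pi'^{-1}$-preimage of $\Nklt(X,\Delta)$. Pushing forward to $Y$ then gives the statement.

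The hard part is Step 1. There I must match the hypotheses of Theorem \ref{p-thm5.1} exactly, which I expect to involve some MMP-type condition such as $-(K_X+\Delta)$ being semiample and $-K_X$ big. The delicate point is making sure that the modified boundary does not enlarge the non-klt locus. If Theorem \ref{p-thm5.1} already takes semiample $-(K_X+\Delta)$ as input, Step 1 collapses to checking the locality reduction.
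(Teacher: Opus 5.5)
Your route is the paper's. Your $(1-\varepsilon)(\Delta+H)+\varepsilon E$ is its $\Delta_2=(1-\varepsilon)\Delta_1+\varepsilon B$, your $\varepsilon A'$ plays the role of its $D_2$, and passing to a higher model and pushing forward along a surjection is Lemma~\ref{p-lem2.12}. The point you single out as delicate is in fact automatic, and you should settle it rather than hedge. Put $\Theta_0:=(1-\varepsilon)(\Delta+H)+\varepsilon E$. Both $K_X+\Delta+H$ and $K_X+E\sim_{\mathbb R,f}-A$ are $\mathbb R$-Cartier, so discrepancies are affine in $\varepsilon$: $a(\nu,X,\Theta_0)=(1-\varepsilon)a(\nu,X,\Delta+H)+\varepsilon a(\nu,X,E)$. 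After shrinking $S$, a log resolution of $(X,\Delta+H+E)$ has only finitely many relevant prime divisors. Hence for $0<\varepsilon\ll 1$, every $\nu$ with $a(\nu,X,\Theta_0)\le -1$ already satisfies $a(\nu,X,\Delta+H)\le -1$. This gives $\Nklt(X,\Theta_0)\subseteq\Nklt(X,\Delta+H)=\Nklt(X,\Delta)$, and adding a general $\varepsilon A'$ changes nothing. There is no need to ``keep $E$ inside the data''.

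What is genuinely missing is matching the hypotheses of Theorem~\ref{p-thm5.1}. That theorem requires $f_*\mathcal O_X\simeq\mathcal O_S$, an effective $\mathbb Q$-divisor boundary, and $K_X+\Delta\sim_{\mathbb Q,f}0$, whereas your $\Theta$ only satisfies $K_X+\Theta\sim_{\mathbb R,f}0$. First Stein-factorize $X\to S'\to S$. Since $X$ is normal, the fibers of $Y\to S'$ are then exactly the connected components of the fibers of $\pi$. Then order the construction as the paper does. While $-(K_X+\Theta_0)\sim_{\mathbb R,f}\varepsilon A$ is still $f$-ample, perturb $\Theta_0$ to an effective $\mathbb Q$-divisor $\Theta_0'$ such that $K_X+\Theta_0'$ is $\mathbb Q$-Cartier, $-(K_X+\Theta_0')$ is still $f$-ample, and the non-klt locus is not enlarged. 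Only then add a general $D\sim_{\mathbb Q,f}-(K_X+\Theta_0')$; this is the paper's sequence $\Delta_2\to\Delta_3\to\Delta_4$. The resulting boundary is $f$-big because it contains $D$.

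Step~3 also feeds the wrong object into the theorem. It must be applied to the pair $(X,\Theta)$ over $S'$, not to $(Y',\Gamma)$, where $\Gamma$ need not even be effective. Item~(1) lets the resolution it constructs factor through your $Y$. Item~(6) then gives rational chain connectedness of its fiber modulo the preimage of $\Nklt(X,\Theta)\subseteq\Nklt(X,\Delta)$. Finally, Lemma~\ref{p-lem2.12} transfers this to $\pi$.
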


Theorem \ref{p-thm1.1} admits several useful corollaries.

\begin{cor}[{\cite[Corollary~1.5]{hacon-mckernan}}]\label{p-cor1.2}
Let $(X,\Delta)$ be a divisorial log terminal pair.
Then we have:
\begin{itemize}
\item[(i)]
The fibers of any bimeromorphic morphism
$g \colon Y \to X$ are rationally chain connected.
\item[(ii)]
Assume further that $X$ is projective.
Then $X$ is rationally chain connected if and only if it is
rationally connected.
\end{itemize}
\end{cor}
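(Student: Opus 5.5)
For (i), I will reduce to Theorem \ref{p-thm1.1} with $S=X$ and $f=\mathrm{id}_X$. The question is local on $X$, so I fix a point $x\in X$ and may shrink $X$ to a small relatively compact Stein open neighborhood of $x$. Since $f$ is the identity, $f$-bigness of $-K_X$ and $f$-semiampleness of $-(K_X+\Delta')$ are automatic for any $\mathbb R$-Cartier divisor. The only difficulty is that Theorem \ref{p-thm1.1} only gives rational chain connectedness \emph{modulo} the inverse image of the non-klt locus. So I need to replace $\Delta$ by some $\Delta'$ with $(X,\Delta')$ klt near $x$, and then the statement becomes unconditional.

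The key step is the standard fact that a dlt pair is klt after a small perturbation of the boundary. I will argue as follows. By definition of dlt, there is a log resolution $h\colon W\to X$ that is an isomorphism over the generic point of every log canonical center, and the exceptional divisors all have discrepancy $>-1$. In the analytic setting this holds after shrinking $X$ around $x$, so that only finitely many components are involved. Write $\Delta=\sum d_i\Delta_i$ and set $\Delta'=(1-\varepsilon)\Delta$ for small $0<\varepsilon$. Then $K_X+\Delta'$ must be $\mathbb R$-Cartier. To get this I will use that $X$ is $\mathbb Q$-factorial-like near dlt centers, or, more safely, that a dlt pair has klt underlying $(X,\Delta-\varepsilon\lfloor\Delta\rfloor)$-type perturbation with some $\mathbb R$-Cartier correction.

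Concretely, I expect the cleanest route to be this. Choose an $h$-exceptional effective divisor $F$ with $-F$ $h$-ample, which is possible after shrinking since $h$ is projective. Push down a general member of $|h^*H-\delta F|$-type linear combinations to produce $\Delta'=(1-\varepsilon)\Delta+\varepsilon G$, where $G\ge 0$ is chosen so that $K_X+\Delta'\sim_{\mathbb R}(K_X+\Delta)$ modulo a Cartier divisor. On $X$ Stein, $\Delta$ itself is $\mathbb R$-Cartier by the dlt condition only at the klt locus, so a general $G\sim_{\mathbb R}\Delta$ in a sufficiently ample class can be taken. Then discrepancies of $(X,\Delta')$ along all $h$-divisors are $>-1$: on exceptional ones by the dlt inequality plus continuity, and on strict transforms since coefficients are $\le 1-\varepsilon+\varepsilon\cdot(\text{small})<1$. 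This gives a klt pair $(X,\Delta')$ near $x$. Then Theorem \ref{p-thm1.1} with the non-klt locus empty shows that every connected component of $g^{-1}(x)$ is rationally chain connected. Since $X$ is normal, Zariski's main theorem makes $g^{-1}(x)$ connected, and this yields (i). I expect the main obstacle to be verifying that $K_X+\Delta'$ is $\mathbb R$-Cartier in the analytic setting. If needed, I will fall back on the fact that dlt singularities are klt in the sense of the underlying variety admitting a klt boundary after shrinking, which is the analytic analogue of \cite[Proposition 2.43]{hacon-mckernan}-type statements.

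For (ii), the direction ``rationally connected $\Rightarrow$ rationally chain connected'' is trivial. Conversely, assume $X$ is projective and rationally chain connected. I take a resolution $g\colon Y\to X$. By (i), the fibers of $g$ are rationally chain connected. Given general $y_1,y_2\in Y$, I connect $g(y_1),g(y_2)$ by a chain of rational curves in $X$. I lift each curve to a rational curve in $Y$ by taking a component of its strict or total transform dominating it, and I join the lifts inside fibers of $g$ using (i). So $Y$ is rationally chain connected. Since $Y$ is smooth and projective, rational chain connectedness implies rational connectedness by Koll\'ar--Miyaoka--Mori. Hence $Y$ is rationally connected, and so is its image $X$.
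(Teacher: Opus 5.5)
Your route is the paper's. For (i) you shrink around a point, replace the dlt boundary by a klt one, and apply Theorem~\ref{p-thm1.1} to $f=\mathrm{id}_X$. For (ii) you pass to a resolution, show it is rationally chain connected, and use Theorem~\ref{p-thm2.11}.

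The paper handles the dlt-to-klt step with the phrase ``by perturbing the coefficients''. Your paragraph on it wanders: the divisor $F$ with $-F$ $h$-ample is never used, and $\Delta$ is $\mathbb R$-Cartier on the snc locus $U$ of $(X,\Delta)$, not ``at the klt locus''. Still, the construction you end with is the standard one and works once stated precisely:
\begin{itemize}
\item on a small Stein neighbourhood take $G=\sum d_iG_i$ with $G_i$ general in $|\Delta_i|$, so $G-\Delta$ is $\mathbb R$-Cartier;
\item take the log resolution to be an isomorphism over $U$ and to make the pulled-back systems free, so every exceptional divisor has center in $X\setminus U$ and hence discrepancy $>-1$ by the dlt condition;
\item then $(X,(1-\varepsilon)\Delta+\varepsilon G)$ is klt for $0<\varepsilon\ll 1$.
\end{itemize}
Your appeal to Zariski's main theorem for connectedness of $g^{-1}(x)$ is a correct addition.

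The step that fails as written is the lifting in (ii). Suppose a rational curve $C$ in your chain lies in $g(\Exc(g))$. Then there is no strict transform. A component of $g^{-1}(C)$ dominating $C$ is typically of dimension $\ge 2$, and a curve in it dominating $C$ is some multisection that need not be rational. Producing a rational curve (or chain) in $g^{-1}(C)$ over $C$ is a Graber--Harris--Starr type statement for families over $\mathbb P^1$ whose fibres are only rationally chain connected, possibly reducible and singular. You give no argument for this, and the paper itself only asserts ``hence $Y$ is rationally chain connected''.

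One way to close the gap without lifting:
\begin{itemize}
\item By (i), which applies to every bimeromorphic $g$, you may assume the MRC fibration $\phi\colon Y\to Z$ is a morphism.
\item If $\dim Z>0$, then $Z$ is not uniruled (Theorem~\ref{p-thm2.16}). So for very general $x_1$, no rational curve passes through $z_1:=\phi(g^{-1}(x_1))$.
\item Walk along a chain from $x_1$ to a very general $x_2$. Suppose $\phi(g^{-1}(c_0))=\{z_1\}$ for a point $c_0$ of a chain curve $C$ with normalization $\mathbb P^1\to C$. Let $I\subset\mathbb P^1\times Z$ be the image of $Y\times_X\mathbb P^1$. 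Its fibres are connected and rationally chain connected, and its fibre over $c_0$ is a point. By upper semicontinuity, $I$ is a single point over a dense open subset of $\mathbb P^1$.
\item So the horizontal part of $I$ is a rational curve through $(c_0,z_1)$, and its projection to $Z$ must be $\{z_1\}$. Each remaining fibre projects to a rationally chain connected subset of $Z$ containing $z_1$, hence to $\{z_1\}$. Thus $\phi(g^{-1}(C))=\{z_1\}$.
\item Continuing along the chain gives $\phi(g^{-1}(x_2))=z_1$. Hence $Z$ is a point and $Y$ is rationally connected.
\end{itemize}
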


\begin{cor}[{\cite[Corollary 1.6]{hacon-mckernan}}]\label{p-cor1.3}
Let $f \colon X \dashrightarrow Y$ be a meromorphic map
between normal complex varieties, and assume that $(X,\Delta)$
is a divisorial log terminal pair for some effective
$\mathbb R$-divisor $\Delta$.
Let $\Gamma \subset X \times Y$ be the graph of $f$,
and denote by $p \colon \Gamma \to X$ and $q \colon \Gamma \to Y$
the natural projections.
Then for every point $x \in X$ at which $f$ is not defined, 
\[
q\bigl(p^{-1}(x)\bigr) \subset Y
\]
is rationally chain connected. 
\end{cor}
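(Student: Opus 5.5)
The plan is to deduce the statement from Corollary \ref{p-cor1.2}(i), which we may take as established via Theorem \ref{p-thm1.1}. The question is local around $x$, so we may shrink $X$ to a relatively compact Stein open neighborhood of $x$; the pair $(X,\Delta)$ stays divisorial log terminal. The graph $\Gamma$ is the closure of the graph of $f$ over the locus where $f$ is holomorphic, so $\Gamma$ is irreducible and $p\colon \Gamma\to X$ is a proper bimeromorphic morphism. The first difficulty is that $\Gamma$ need not be normal, while the corollary asks for rational chain connectedness of $q(p^{-1}(x))$, not of $p^{-1}(x)$ itself.

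First I take the normalization $\nu\colon \Gamma^\nu\to\Gamma$, which is finite and surjective. Then $p\circ\nu\colon \Gamma^\nu\to X$ is a bimeromorphic morphism from a normal variety, and Corollary \ref{p-cor1.2}(i) shows that the fiber $(p\circ\nu)^{-1}(x)$ is rationally chain connected. Its image under $\nu$ is all of $p^{-1}(x)$, because $\nu$ is surjective. Hence
\[
q\bigl(p^{-1}(x)\bigr)=(q\circ\nu)\bigl((p\circ\nu)^{-1}(x)\bigr).
\]
If Corollary \ref{p-cor1.2}(i) is only available for projective bimeromorphic morphisms, I would replace $\Gamma^\nu$ by a resolution that is projective over $X$, or by Hironaka's Chow lemma. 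This dominates $\Gamma^\nu$ over the compact set $p^{-1}(x)$, and the same argument applies after restricting to a neighborhood of $x$.

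Finally I check that the image of a rationally chain connected compact analytic space under a holomorphic map is rationally chain connected. Given two points of the image, lift them, join the lifts by a chain of rational curves, and push the chain forward. Each component maps either to a point, which is dropped, or onto a rational curve, since the image of $\mathbb P^1$ is rational. The chain stays connected, so the image is rationally chain connected. Applying this to $q\circ\nu$ gives the corollary.

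The main obstacle is the reduction step. One has to confirm that the notion of rational chain connectedness in the paper is stable under images, which holds whether it is phrased with chains of curves or with families. One also has to confirm that Corollary \ref{p-cor1.2}(i) really covers arbitrary proper bimeromorphic morphisms from normal spaces, in particular non-projective ones such as $\Gamma^\nu\to X$. If only the projective case is available, the Chow lemma domination described above closes the gap.
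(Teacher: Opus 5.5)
Your proposal is correct and follows the paper's route. The paper applies Corollary~\ref{p-cor1.2}(i) directly to $p\colon\Gamma\to X$ and then pushes the rationally chain connected fiber $p^{-1}(x)$ forward along $q$ (citing Lemma~\ref{p-lem2.12}). Your normalization and Chow-lemma precautions are harmless but unnecessary: Theorem~\ref{p-thm1.1} is stated for an arbitrary bimeromorphic $g$, and its proof already passes to a higher model via Lemma~\ref{p-lem2.12} and Theorem~\ref{p-thm5.1}(1).
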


\begin{cor}\label{p-cor1.4}
Let $f \colon X \dashrightarrow Y$ be a meromorphic map
between complex analytic spaces, and assume that $(X,\Delta)$
is divisorial log terminal.
If $Y$ contains no rational curves, then $f$ is a morphism.
\end{cor}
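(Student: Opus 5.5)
We deduce Corollary \ref{p-cor1.4} from Corollary \ref{p-cor1.3}, much as in the algebraic case. The question is local on $X$, and a divisorial log terminal pair is in particular normal, so we may assume $X$ is a normal complex variety. Let $\Gamma \subset X\times Y$ be the closure of the graph of $f$, with projections $p\colon \Gamma\to X$ and $q\colon\Gamma\to Y$. Since $f$ is meromorphic, $p$ is a proper bimeromorphic morphism. The plan is to show that $p$ is an isomorphism, or at least that $q\circ p^{-1}$ extends holomorphically across every point of $X$.

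First, fix a point $x\in X$ at which $f$ is not defined. By Corollary \ref{p-cor1.3}, the compact analytic set $q(p^{-1}(x))\subset Y$ is rationally chain connected. We then claim it is positive dimensional. If instead $q(p^{-1}(x))$ were a single point $y$, then $p^{-1}(x)=\{x\}\times\{y\}$ would be a single point. Since $p$ is proper and bimeromorphic onto the normal space $X$, Zariski's main theorem (analytic version) shows that $p$ is an isomorphism over a neighbourhood of $x$. Concretely, $p$ has finite fibers near $x$, hence is finite and bimeromorphic there, hence an isomorphism by normality. Then $f=q\circ p^{-1}$ is holomorphic near $x$, which contradicts the choice of $x$.

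Consequently $q(p^{-1}(x))$ is a positive dimensional, rationally chain connected compact analytic subset of $Y$. By definition it therefore contains a rational curve, that is, the image of a nonconstant morphism $\mathbb P^1\to Y$. This contradicts the hypothesis that $Y$ contains no rational curves. Hence $f$ is defined at every point, and $f$ is a morphism.

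The only delicate point is the use of Zariski's main theorem in the analytic category, which passes from ``$p^{-1}(x)$ is a point'' to ``$f$ is holomorphic at $x$''. This is standard for proper bimeromorphic maps onto normal complex spaces: upper semicontinuity of fiber dimension makes $p$ finite near $x$, and a finite bimeromorphic morphism onto a normal space is an isomorphism. One should also observe that the case where $Y$ is not a variety causes no trouble. The statement is local, and $f$ is defined on a dense open subset of the irreducible space $X$, so its image lies in a single irreducible component, to which Corollary \ref{p-cor1.3} applies with $Y$ replaced by that component (suitably normalized if Corollary \ref{p-cor1.3} requires normality). The absence of rational curves passes to the normalization: a rational curve on the normalization maps to a nonconstant image of $\mathbb P^1$ in $Y$, because the normalization map is finite.
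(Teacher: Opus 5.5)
Your argument is correct and is essentially the paper's proof. The paper applies Corollary~\ref{p-cor1.2}~(i) directly to $p\colon \Gamma\to X$ and uses that $q$ maps $p^{-1}(x)\subset\{x\}\times Y$ isomorphically onto its image, so no fiber of $p$ can be positive dimensional and $p$ is an isomorphism by the same finite-and-bimeromorphic-onto-normal reasoning you spell out; you instead go through Corollary~\ref{p-cor1.3}, which is just Corollary~\ref{p-cor1.2}~(i) plus Lemma~\ref{p-lem2.12}. Going directly through Corollary~\ref{p-cor1.2}~(i) also makes your closing normalization bookkeeping for $Y$ unnecessary, and that is worth doing: lifting $f$ to the normalization of an irreducible component of $Y$ is not automatic when the image of $f$ lies in the non-normal locus of that component.
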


Corollaries~\ref{p-cor1.2}, \ref{p-cor1.3}, and \ref{p-cor1.4} 
are stated for divisorial log terminal pairs. 
However, we note the following simple observation.

\begin{rem}\label{p-rem1.5} 
Let $(X, \Delta)$ be a kawamata log terminal pair.
If $(X, \Delta)$ is in the algebraic setting, then $(X, \Delta)$ is
divisorial log terminal.
If $(X, \Delta)$ is in the complex analytic setting, then
$(U, \Delta|_U)$ is divisorial log terminal for any relatively compact
open subset $U$ of $X$.
For the details of the definition of divisorial log terminal pairs in the
complex analytic setting, see \cite[Section 3]{fujino-minimal}.
\end{rem}

In \cite{hacon-mckernan}, the following theorem is derived as an 
application of Hacon--M\textsuperscript{c}Kernan's rational chain 
connectedness theorem (see \cite[Theorem 5.1]{hacon-mckernan}). 
It is by now standard that Theorem \ref{p-thm1.6} admits a more 
direct proof.

\begin{thm}\label{p-thm1.6}
Let $X$ be a normal projective variety and let $\Delta$ be an effective 
$\mathbb R$-divisor on $X$ such that $K_X+\Delta$ is $\mathbb R$-Cartier. Assume that $-(K_X+\Delta)$ is ample. 
Then $X$ is rationally chain connected modulo $\Nklt(X, \Delta)$. 
Moreover, in the case where $(X, \Delta)$ is kawamata log terminal, 
$X$ is rationally connected. 
\end{thm}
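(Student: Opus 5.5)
The plan is to use the maximal rationally chain connected fibration (taken modulo $V:=\Nklt(X,\Delta)$ in the first assertion), and to show that its base is a point by combining the canonical bundle formula with the non-uniruledness criterion of Boucksom--Demailly--P\u{a}un--Peternell. I treat the kawamata log terminal case first and then adapt it to the general case.

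\textbf{The klt case.} Assume $(X,\Delta)$ is kawamata log terminal.
\begin{enumerate}[(1)]
\item Since $-(K_X+\Delta)$ is ample, perturb $\Delta$ so that it becomes a $\mathbb Q$-divisor. Then choose a general ample $\mathbb Q$-divisor $A\sim_{\mathbb Q}-(K_X+\Delta)$ with $(X,\Delta+A)$ klt and $K_X+\Delta+A\sim_{\mathbb Q}0$.
\item Let $X\dashrightarrow Z$ be the MRC fibration and take a resolution $\varphi\colon W\to Z$ of it, with $p\colon W\to X$ bimeromorphic and $Z$ smooth projective.
\item Suppose for contradiction that $\dim Z>0$. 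By Graber--Harris--Starr, $Z$ is not uniruled, so $K_Z$ is pseudoeffective by BDPP.
\item Write $K_W+\Delta_W+A_W=p^*(K_X+\Delta+A)\sim_{\mathbb Q}0$. The horizontal part of this pair is klt because the pair upstairs is klt. The canonical bundle formula (Ambro, Fujino--Mori) then gives
\[
0\sim_{\mathbb Q}K_W+\Delta_W+A_W\sim_{\mathbb Q}\varphi^*(K_Z+B_Z+M_Z),
\]
with $M_Z$ nef (b-nef) and $B_Z$ having coefficients $<1$. The divisor $B_Z$ need not be effective, because $\Delta_W$ has negative exceptional part.
\item To control the negative part, use that $A$ is ample and not merely effective: split $A\sim_{\mathbb Q}A_1+A_2$ with both pieces ample.
\item Since $p^*A_2$ is big on a general fibre's complement, weak positivity (Viehweg, or Campana's form of the argument) shows that $K_{W/Z}+\Delta_W^{\ge0}+p^*A_1$ has pseudoeffective direct-image behaviour. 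Hence $\varphi_*$-positivity forces $-K_Z$ to be pseudoeffective and nonzero modulo numerical equivalence.
\item Together with step (3) this is a contradiction. Therefore $Z$ is a point and $X$ is rationally connected, since for smooth models rational chain connectedness and rational connectedness agree.
\end{enumerate}
The step I expect to be most delicate is the sign bookkeeping in (4)--(6). The cleanest route I would try first is Zhang's argument: pull everything back to the graph, and use that $K_Z$ pseudoeffective plus nefness of the moduli part forces $\varphi^*K_Z\cdot$(movable curves)$\ge0$. Then intersect $0\sim_{\mathbb Q}K_W+\Delta_W+A_W$ with the pullback of a general complete-intersection curve on $Z$, lifted into the locus where $p$ is an isomorphism. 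The ample $A$ then contributes strictly positively, which gives the contradiction.

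\textbf{The modulo $\Nklt$ case.} For general $(X,\Delta)$ I run the same argument with the rationally chain connected fibration modulo $V$ in Kollár's sense.
\begin{itemize}
\item If $V$ does not dominate $Z$, the horizontal part of $(W,\Delta_W+A_W)$ is klt over the generic point of $Z$, and the argument above goes through verbatim. This yields $\dim Z=0$, so $X$ is rationally chain connected modulo $V$.
\item If some component of $V$ dominates $Z$, then every general fibre meets $V$.
\end{itemize}
The dominating case is the main obstacle. To handle it, I would first pass to a dlt blow-up, which in the projective algebraic setting is available by \cite{bchm}. I would then use the Shokurov--Koll\'ar connectedness lemma, which applies since $-(K_X+\Delta)$ is ample, to see that $V$ is connected and meets every fibre.

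Next I would argue by induction on dimension, applying the theorem to the log Fano-type pairs obtained by adjunction on the lc centres: they satisfy $-(K_{V_i}+\Delta_{V_i})$ ample, so each is chain connected modulo its own non-klt locus. Gluing these chains shows that any two points of $V$ are chain connected modulo smaller strata. This reduces everything to connecting a general point of $X$ to $V$.

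That last connection comes from bend-and-break: through a general point, with $-(K_X+\Delta)$ ample, Miyaoka--Mori type rational curves either stay in a fibre or reach $V$. In the latter case the fibration modulo $V$ collapses to a point, which finishes the proof.
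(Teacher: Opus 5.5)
Your skeleton (MRC fibration, Graber--Harris--Starr, BDPP, then a positivity statement contradicting pseudo-effectivity of $K_Z$) is the one underlying the paper's proof, which applies Lemma~\ref{p-lem4.6}, i.e.\ Lemma~\ref{p-lem4.2} together with Proposition~\ref{p-prop4.1}. However, the decisive step (4)--(6) is not established, and the routes you sketch fail.

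Write $\Delta_W=\Delta_W^{\ge0}-N$ with $N\ge0$ $p$-exceptional. The canonical bundle formula gives $0\sim_{\mathbb Q}K_Z+B_Z+M_Z$, where $B_Z$ may have negative coefficients along divisors $P\subset Z$ with $\varphi^{-1}(P)\subset\Exc(p)$. So pseudo-effectivity of $K_Z$ and nefness of $M_Z$ give no contradiction.

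Your curve argument does not repair this. A general complete-intersection curve $C\subset Z$ meets every such $P$, so it cannot in general be lifted into the isomorphism locus of $p$. Moreover, once $A_W$ is part of the boundary of the lc-trivial fibration, it is absorbed into $B_Z+M_Z$. Hence $0=(K_Z+B_Z+M_Z)\cdot C$ contains no separate positive term from $A$, while $B_Z\cdot C$ may be negative. If you instead split off $K_{W/Z}$, you would need $(K_{W/Z}+\Delta_W)\cdot\tilde C\ge0$. There is no reason for this, since $K_{W/Z}+\Delta_W$ is anti-big on the general fibre.

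Step (6) has the same defect. You have $K_{W/Z}+\Delta_W^{\ge0}+p^*A_1\sim_{\mathbb Q}-\varphi^*K_Z+N-p^*A_2$. Weak positivity needs nonnegative Kodaira dimension of $(N-p^*A_2)|_{W_z}$ on the general fibre, which is not known. Even bigness of $-\varphi^*K_Z+N$ says nothing about $-K_Z$, because $N$ is only $p$-exceptional, not $\varphi$-exceptional, and may contain whole fibres $\varphi^*P$.

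The missing idea is to compare Kodaira dimensions on $W$ rather than pushing positivity down to $-K_Z$; on $W$, $N$ is harmless precisely because it is $p$-exceptional. Take the effective boundary $\Gamma:=\Delta_W^{\ge0}+A_W+\varepsilon\Exc(p)$. In the non-klt case, replace it by $\Gamma^b$; the discarded part is vertical when the non-klt locus does not dominate $Z$. Then $K_W+\Gamma=p^*(K_X+\Delta+A)+N+\varepsilon\Exc(p)$, so $\kappa(W,K_W+\Gamma)=\kappa(X,K_X+\Delta+A)=0$, and $K_W+\Gamma$ has $\kappa\ge0$ on the general fibre. Since $A_W=p^*A$ is big, Kodaira's lemma lets you move $A_W$ in its $\mathbb Q$-linear equivalence class so that $\varphi^*G\le\Gamma$ for an ample $G$ on $Z$, while keeping $(W,\Gamma)$ klt over the generic point of $Z$. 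Theorem~\ref{p-thm3.1} and Corollary~\ref{p-cor3.2} (weak positivity, with $K_Z$ pseudo-effective) then give $\kappa(W,K_W+\Gamma)\ge\dim Z$, forcing $\dim Z=0$.

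Finally, the case you call the main obstacle is immediate. If $\Nklt(X,\Delta)$ dominates the MRC base, a very general point lies on a rationally connected fibre meeting it. Since limits of rational chains are rational chains, every point is then chained to it; this is Lemma~\ref{p-lem4.2}(ii). The dlt blow-up, connectedness lemma, induction on lc centres and bend-and-break are therefore unnecessary, and your final bend-and-break step is not justified as stated.
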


Theorem \ref{p-thm1.7} is an application of Theorem \ref{p-thm1.6} and 
the theory of quasi-log structures. 

\begin{thm}[{Rational chain connectedness, see 
\cite[Theorem 1.14]{fujino-hyp} and \cite[Theorem 9.8]{fujino-quasi-log}}]\label{p-thm1.7} 
Let $\pi\colon X\to S$ be a projective morphism of 
complex analytic spaces with $\pi_*\mathcal O_X\simeq \mathcal O_S$ and let $[X, \omega]$ be a quasi-log complex analytic space. 
Assume that $-\omega$ is $\pi$-ample. Then $\pi^{-1}(P)$ is rationally chain connected modulo $\pi^{-1}(P)\cap X_{-\infty}$ for every point $P\in S$. In particular, if $\pi^{-1}(P)\cap X_{-\infty}=\emptyset$ further holds, that is, $[X, \omega]$ is quasi-log canonical in a neighborhood of $\pi^{-1}(P)$, then $\pi^{-1}(P)$ is rationally chain connected.
\end{thm}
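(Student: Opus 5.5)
We argue by induction on $\dim X$, using the adjunction and vanishing theorems for quasi-log complex analytic spaces together with Theorem \ref{p-thm1.6}. Fix $P\in S$. Everything is local over $S$, so we may shrink $S$ to a relatively compact Stein neighborhood of $P$. We may also replace $X$ by the union of $X_{-\infty}$ and the qlc strata meeting $\pi^{-1}(P)$. Since $-\omega$ is $\pi$-ample, the vanishing theorem for quasi-log structures gives $R^1\pi_*\mathcal I_{X_{-\infty}}=0$. Combined with $\pi_*\mathcal O_X\simeq\mathcal O_S$, this shows that $\pi^{-1}(P)\cap X_{-\infty}$ is either empty or meets every connected piece of $\pi^{-1}(P)$ appropriately. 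Hence the connectedness statements needed for chaining hold: the union of $X_{-\infty}$ with any union of qlc centers is connected over $P$.

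The key step is to produce enough rational curves. Let $W$ be a qlc stratum of $[X,\omega]$ meeting $\pi^{-1}(P)$, not contained in $X_{-\infty}$, and minimal with this property after we add $X_{-\infty}$. By adjunction, $W\cup X_{-\infty}$ (or $W$ itself) carries a quasi-log structure $[W,\omega|_W]$ with $-\omega|_W$ still ample over $S$. We then split into two cases.

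\emph{Case 1: $\dim W<\dim X$.} Apply induction to the Stein factorization of $W\to\pi(W)$. This shows that the fiber of $W$ over $P$ is rationally chain connected modulo $X_{-\infty}$.

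\emph{Case 2: $W=X$.} Here the quasi-log structure is induced near the generic point of the fiber by a pair. To reach a genuine pair, take the basic slc-trivial/quasi-log resolution $(Z,\Delta_Z)\to X$ and use the standard trick: the normalization $\nu\colon X^\nu\to X$ carries an $\mathbb R$-divisor $\Delta$ with $K_{X^\nu}+\Delta\sim_{\mathbb R}\nu^*\omega$, and $\Nklt(X^\nu,\Delta)$ maps into $X_{-\infty}$ union the lower qlc centers. Then we need the relative version of Theorem \ref{p-thm1.6}: if $-(K+\Delta)$ is $\pi$-ample, then fibers are rationally chain connected modulo the non-klt locus. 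To obtain it, we cut down. The fiber $\pi^{-1}(P)$ is projective, and by Kollár–Miyaoka–Mori bend-and-break (via the relative cone theorem for quasi-log spaces), every curve in the fiber not lying in $\Nklt$ is connected by rational curves of $-\omega$-degree bounded by $2\dim X$ to the non-klt locus or to another point. A cleaner route is to use that the fiber over a general point of $\pi(X)$ through $P$ is a projective pair with anti-ample log canonical divisor. Theorem \ref{p-thm1.6} then makes general fibers rationally chain connected modulo $\Nklt$. We then specialize using the properness of the relative Chow/Douady space of chains of rational curves: limits of rational chains are rational chains, and limits of points in $\Nklt$ stay in the closed set $\Nklt$.

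Finally, we glue. The fiber $\pi^{-1}(P)$ is a union of the fibers of qlc strata. Each is rationally chain connected modulo $X_{-\infty}$ by the above, and the strata intersect (the union of qlc centers is closed under intersection up to unions of qlc centers, and the connectedness from the vanishing theorem holds). So chains can be concatenated, which proves that $\pi^{-1}(P)$ is rationally chain connected modulo $\pi^{-1}(P)\cap X_{-\infty}$. The last sentence of the statement is the special case $X_{-\infty}\cap\pi^{-1}(P)=\emptyset$.

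The main obstacle is the specialization step in Case 2, in the complex analytic setting. We must ensure that "rationally chain connected modulo a closed subset" is closed under degeneration in a projective family over a non-algebraic base. I would first try to handle this with the relative Douady space of $X/S$ restricted over a small Stein neighborhood, using its properness on components of bounded degree. The degree bound comes from bend-and-break applied to the $\pi$-ample $-\omega$.
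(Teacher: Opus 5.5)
\textbf{There is a genuine gap: the relative version of Theorem~\ref{p-thm1.6} in Case~2.} Your reductions are the right tools: connectedness from the vanishing theorem, adjunction to qlc strata, and passing to the normalization of a stratum to get a pair with $-(K+\Delta)$ ample over $S$. But the proof breaks at the one step that carries all the content. Your final gluing also needs this statement for every non-minimal stratum, since a point of $\pi^{-1}(P)$ lying on no proper qlc center is otherwise never connected to anything.

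Specialization from general fibers cannot supply it. A single point $x\in\pi^{-1}(P)$ is a limit of points of general fibers. So if the non-klt locus meets the general fiber, you can (granting boundedness) connect $x$ to it by limits of chains. If it does not, you must connect \emph{pairs} of points of $\pi^{-1}(P)$; this happens in the klt case, or whenever $X_{-\infty}$ lies over a proper closed subset of $S$ through $P$. A pair is a limit of pairs joined in general fibers only if it lies in the closure of $\bigcup_t X_t\times X_t$ over general $t$. This fails once the fiber dimension jumps, and it fails completely in the bimeromorphic case. For a divisorial or flipping contraction of a klt pair with $\omega=K_X+\Delta$, the general fibers are points and that closure is the diagonal, so your limits produce no curve at all. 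Yet $\pi^{-1}(P)$ is positive-dimensional and the theorem asserts it is rationally chain connected. This is exactly the Hacon--M\textsuperscript{c}Kernan situation, so it is the heart of the matter, not a technicality about Douady spaces or degree bounds. Bend-and-break does not rescue it either. At best it gives rational curves through points of the fiber, and being covered by rational curves is far weaker than rational chain connectedness modulo a subset (think of $\mathbb P^1\times E$ with $E$ elliptic).

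\textbf{The missing idea is to move the problem inside the fiber}, so that only the absolute Theorem~\ref{p-thm1.6} is ever needed. This is how the paper deduces Theorem~\ref{p-thm1.7} from Theorem~\ref{p-thm1.6} via quasi-log theory. It is the same device as $\Gamma_t=\Gamma+t\pi^*G$ in the proof of Theorem~\ref{p-thm5.1}.

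Take an effective divisor $G$ on $S$ through $P$, built from general hypersurfaces through $P$, and consider $[X,\omega+c\,\pi^*G]$. This is again a quasi-log structure, and $-(\omega+c\,\pi^*G)$ is still $\pi$-ample because $\pi^*G$ is trivial on fibers. At the threshold value of $c$, new qlc centers appear, all contained in $\pi^{-1}(P)$. A minimal one, $C$, is projective, and adjunction gives it a quasi-log structure with anti-ample $(\omega+c\,\pi^*G)|_C\equiv\omega|_C$. Its normalization then carries a projective pair $(C^\nu,\Delta_C)$ with $-(K_{C^\nu}+\Delta_C)$ ample and $\nu(\Nklt(C^\nu,\Delta_C))$ contained in lower strata and $X_{-\infty}$. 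Because of the moduli part, in general one only gets $K+\Delta\sim_{\mathbb R}\nu^*\omega+\varepsilon H$ with $H$ ample. So your exact $K_{X^\nu}+\Delta\sim_{\mathbb R}\nu^*\omega$ is too strong, though harmless here since $-\omega$ is ample. Theorem~\ref{p-thm1.6} then applies to $C^\nu$ directly. Raising $c$ step by step, as in (5)--(6) of Theorem~\ref{p-thm5.1}, together with induction on dimension and connectedness, exhausts $\pi^{-1}(P)$.

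Alternatively, you could close the gap in Case~2 by applying Theorem~\ref{p-thm1.1} with $g=\mathrm{id}$ to the normal pair; note that $-K=-(K+\Delta)+\Delta$ is big over $S$. But that makes Theorem~\ref{p-thm1.7} depend on all of Section~\ref{p-sec5}, which the paper's route deliberately avoids.
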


The following corollary is a special case of Theorem \ref{p-thm1.7}. 
Consequently, Corollary \ref{p-cor1.8} follows from 
Theorem \ref{p-thm1.6} and is independent of 
Theorem \ref{p-thm1.1} and Theorem \ref{p-thm5.1}.

\begin{cor}[{\cite[Corollary~1.3]{hacon-mckernan}}]\label{p-cor1.8}
Let $X$ be a normal complex variety and let $\Delta$ be an effective
$\mathbb R$-divisor on $X$ such that $K_X+\Delta$ is $\mathbb R$-Cartier.
Let $f \colon X \to S$ be a projective morphism between normal complex varieties.
Assume that:
\begin{itemize}
\item[(1)]
$(X,\Delta)$ is kawamata log terminal, $-(K_X+\Delta)$ is $f$-nef,
and $-K_X$ is $f$-big; or
\item[(2)]
$(X,\Delta)$ is log canonical and $-(K_X+\Delta)$ is $f$-ample.
\end{itemize}
Then every connected component of every fiber of $f$ is
rationally chain connected.
\end{cor}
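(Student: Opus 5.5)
The plan is to derive Corollary \ref{p-cor1.8} from Theorem \ref{p-thm1.7}. In each case we will produce a quasi-log complex analytic space $[X',\omega]$ over $S$ with $-\omega$ ample over the base, and with $X'_{-\infty}=\emptyset$ near the fiber under consideration. Rational chain connectedness is local on $S$, so we fix a point $P\in S$ and shrink $S$ to a relatively compact Stein open neighborhood of $P$ whenever convenient.

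We first replace $f$ by its Stein factorization $X\to T\to S$, where $T\to S$ is finite. The fibers of $f$ over $P$ are then the disjoint union of the fibers of $X\to T$ over the finitely many points of $T$ lying over $P$. These fibers are exactly the connected components of $f^{-1}(P)$. Hence we may assume $f_*\mathcal O_X\simeq\mathcal O_S$. All the hypotheses are preserved, because $f$-nefness, $f$-bigness and $f$-ampleness are unchanged when we pass to $X\to T$.

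Case (2) is then immediate. A log canonical pair $(X,\Delta)$ carries the natural quasi-log structure $[X,K_X+\Delta]$ with $X_{-\infty}=\Nklt$-type locus of non-lc centers, which is empty here; see \cite{fujino-minimal} and the theory of quasi-log complex analytic spaces. Since $-(K_X+\Delta)$ is $f$-ample, Theorem \ref{p-thm1.7} applies directly, and every fiber $f^{-1}(P)$ is rationally chain connected.

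Case (1) is the main step, and the difficulty is that $-(K_X+\Delta)$ is only nef, not ample. We will perturb $\Delta$ to make it ample while keeping the pair klt. Since $-K_X$ is $f$-big, after shrinking $S$ we can write $-K_X\sim_{\mathbb R} A+E$ over $S$, with $A$ $f$-ample and $E\ge 0$. For $0<\varepsilon\ll1$ we set $\Delta_\varepsilon=(1-\varepsilon)\Delta+\varepsilon E$. Then
\[
-(K_X+\Delta_\varepsilon)\sim_{\mathbb R}(1-\varepsilon)\bigl(-(K_X+\Delta)\bigr)+\varepsilon A
\]
over $S$. This is the sum of an $f$-nef divisor and an $f$-ample divisor, so it is $f$-ample. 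Klt-ness is an open condition: over the relatively compact set we get from shrinking $S$, only finitely many divisors are involved, so $(X,\Delta_\varepsilon)$ is still klt for $\varepsilon$ small. We need a little care in the analytic setting to make sure that finitely many coefficients and discrepancies are controlled on a relatively compact neighborhood of $f^{-1}(P)$. Once we have this, we are in case (2) with $(X,\Delta_\varepsilon)$ klt, hence log canonical, and Theorem \ref{p-thm1.7} gives the conclusion.

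We expect the main obstacle to be this analytic bookkeeping: making the decomposition $-K_X\sim_{\mathbb R}A+E$ and the klt-preserving perturbation work on a neighborhood of a single fiber, together with checking that the standard quasi-log structure on a log canonical pair satisfies the hypotheses of Theorem \ref{p-thm1.7} in the complex analytic category.
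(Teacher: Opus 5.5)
Your proposal is correct and follows the paper's proof essentially step for step. The paper also reduces to $f_*\mathcal O_X\simeq\mathcal O_S$ by Stein factorization, treats case (2) as a special case of Theorem \ref{p-thm1.7} via the natural quasi-log canonical structure $[X,K_X+\Delta]$, and reduces case (1) to case (2) by the perturbation $(1-\varepsilon)\Delta+\varepsilon B$ from the proof of Lemma \ref{p-lem2.7}; your analytic bookkeeping is handled there simply by shrinking $S$ around the fixed point.
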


\begin{ack}
This work was partially supported by JSPS KAKENHI Grant Numbers JP21H04994 and JP23K20787. The author would like to express his sincere gratitude to Yoshinori Gongyo, Chuanjing Zhang, Shiyu Zhang, and Xi Zhang for their valuable comments and suggestions. The author would also like to thank Stefano Filipazzi and Nikolaos Tsakanikas for pointing out an error in Lemma~\ref{p-lem4.2} and for their helpful comments. Finally, the author would like to thank the anonymous referee for many valuable comments and suggestions, which greatly improved the presentation of this paper.
\end{ack}

We follow \cite{fujino-fundamental} and \cite{fujino-foundations} for the basic definitions 
and notation of the minimal model program. 
For the complex analytic setting, 
we refer to \cite{fujino-minimal} and \cite{fujino-cone}. 
For the basic properties of uniruled, rationally connected, and 
rationally chain connected varieties, we follow 
\cite{debarre} and \cite{kollar}. 
Throughout this paper, all algebraic varieties are defined over $\mathbb{C}$. 

\section{Preliminaries}\label{p-sec2}

We follow \cite[Chapter 2]{fujino-foundations}, \cite[Sections 2 and 3]{fujino-minimal}, and 
\cite[Chapter 2]{fujino-cone} for the standard definitions and notation 
in the minimal model program.

We begin by recalling several auxiliary definitions and basic properties 
that will be used throughout the paper.

\begin{defn}[Boundary part]\label{p-def2.1}
Let 
$
D=\sum_i d_i D_i
$ 
be an effective $\mathbb{R}$-divisor, where $d_i \in \mathbb{R}$,
each $D_i$ is a prime divisor, and $D_i \neq D_j$ for $i \neq j$.
Define
\[
D^b := \sum_i \min\{d_i,1\} D_i,
\qquad
D^{nb} := D - D^{b}.
\]
Then $D^{nb} \ge 0$, and we call $D^b$ the \emph{boundary part} of $D$.
\end{defn}

\begin{defn}[Non-klt locus]\label{p-def2.2}
Let $X$ be a normal complex variety, and let $\Delta$ be an 
effective $\mathbb{R}$-divisor on $X$ such that $K_X+\Delta$ is $\mathbb{R}$-Cartier. 
The \emph{non-klt locus} of $(X, \Delta)$, denoted by 
$\Nklt(X, \Delta)$, is the smallest closed analytic subset 
$Z \subset X$ such that $(X, \Delta)$ is kawamata log terminal on 
$X \setminus Z$. 

In this paper, we regard $\Nklt(X, \Delta)$ simply as a subset of $X$ 
and do not consider its complex analytic structure.
\end{defn}

\begin{defn}\label{p-def2.3}
Let $f\colon X \dashrightarrow Z$ be a dominant rational map
between projective varieties, and let $V$ be a Zariski closed subset of $X$.
We say that $V$ \emph{dominates} $Z$ if there exists an elimination of
indeterminacy $g\colon Y \to X$ of $f$, that is, a birational morphism
from a projective variety $Y$ such that the induced map
\[
h := f \circ g \colon Y \to Z
\]
is a morphism and satisfies $h(g^{-1}(V)) = Z$.
This notion is independent of the choice of the elimination of 
indeterminacy $g$.
\end{defn}

\begin{ex}\label{p-ex2.4} 
Let $\pi\colon \mathbb P^n\dashrightarrow \mathbb P^{n-1}$ be the linear projection from a point $P\in \mathbb P^n$.
The indeterminacy of $\pi$ is resolved by the blow-up of $\mathbb P^n$ at $P$, and the exceptional divisor is mapped isomorphically onto $\mathbb P^{n-1}$.
Hence $P$ dominates $\mathbb P^{n-1}$ in the sense of Definition~\ref{p-def2.3}.
\end{ex}

In the complex analytic setting, line bundles are not always represented by
Cartier divisors. Therefore, we define nefness for objects consisting of
$\mathbb{R}$-line bundles and $\mathbb{R}$-Cartier divisors. 

\begin{defn}[Nefness]\label{p-def2.5}
Let \( f \colon X \to Y \) be a projective morphism 
of complex analytic spaces, and let \( W \subset Y \) be a subset. 
Let \( \mathcal{L} \) be an \( \mathbb{R} \)-line bundle on \( X \), 
or the sum of an \( \mathbb{R} \)-line bundle and an 
\( \mathbb{R} \)-Cartier divisor. 
We say that \( \mathcal{L} \) is \emph{\( f \)-nef over \( W \)} 
if
\(
\mathcal{L} \cdot C \ge 0
\)
for every curve \( C \subset X \) such that \( f(C)\) is a point of $W$.
If \( \mathcal{L} \) is \( f \)-nef over \( Y \), 
we simply say that it is \( f \)-nef.
\end{defn}

In this paper, we adopt the following definition of bigness.
This is a complex analytic analogue of Hacon--M\textsuperscript{c}Kernan's definition of bigness.

\begin{defn}[Bigness]\label{p-def2.6}
Let $f\colon X\to S$ be a projective morphism of complex analytic spaces
such that $X$ is a normal complex variety.
Let $D$ be an $\mathbb R$-divisor on $X$.
If $S$ is Stein, then $D$ is said to be {\em big over $S$} or {\em $f$-big} if
\[
D\sim_{\mathbb Q,f} A+B,
\]
where $A$ is an $f$-ample $\mathbb Q$-divisor and $B$ is an effective
$\mathbb R$-divisor.
For a general base $S$, if $D|_{\pi^{-1}(U)}$ is big over $U$ for every
Stein open subset $U$ of $S$, then $D$ is said to be {\em big over $S$}
or {\em $f$-big}.
We note that $D$ is not assumed to be $\mathbb R$-Cartier.
\end{defn}

For the sake of completeness, we explicitly state the basepoint-free theorem in the complex analytic setting that will be used in this paper.

\begin{lem}[{Basepoint-free theorem, see \cite[Lemma 7.1]{hacon-mckernan}, \cite[Theorem 8.1]{fujino-minimal}, and \cite[Theorem 5.3.1]{fujino-cone}}]\label{p-lem2.7}
Let $(X, \Delta)$ be a kawamata log terminal pair, and let 
$f\colon X \to S$ be a projective morphism of complex analytic spaces. 
Suppose that $-(K_X + \Delta)$ is $f$-nef over a point $s \in S$, 
and that $-K_X$ is $f$-big. 
Then $-(K_X + \Delta)$ is $f$-semiample over some open neighborhood of $s$ in $S$.
\end{lem}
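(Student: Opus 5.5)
We reduce to the classical Kawamata--Shokurov basepoint-free theorem in its complex analytic form, \cite[Theorem 8.1]{fujino-minimal} (equivalently \cite[Theorem 5.3.1]{fujino-cone}). That theorem needs three inputs: a kawamata log terminal pair, a projective morphism, and a divisor $L$ with $L$ $f$-nef and $aL-(K_X+\Delta')$ $f$-nef and $f$-big over a neighborhood of $s$. The first step is to localize. We replace $S$ by a small relatively compact Stein open neighborhood $U$ of $s$ and $X$ by $f^{-1}(U)$. We also shrink $U$ so that, by Definition~\ref{p-def2.6}, we can write
\[
-K_X \sim_{\mathbb Q, f} A + B
\]
with $A$ an $f$-ample $\mathbb Q$-divisor and $B\ge 0$.

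The second step is to make $-(K_X+\Delta)$ nef over a whole neighborhood and not just over $s$. Nefness over the single point $s$ does not automatically spread to nearby fibers, because the relative cone can jump. I would invoke the analytic cone theorem for the klt pair $(X,\Delta)$ over a Stein neighborhood of the compact set $f^{-1}(s)$. By that theorem, the $(K_X+\Delta)$-negative part of $\overline{NE}(X/U; W)$ is spanned by finitely many extremal rays locally. Each such ray is either contracted into the fiber over $s$, or it is not realized over $s$. Discarding the second kind by shrinking $U$, we obtain that $-(K_X+\Delta)$ is $f$-nef over $U$. This is the step I expect to be the main technical obstacle. It needs the finiteness and locality statements of the cone theorem in \cite{fujino-cone}. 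It also needs the fact that nefness over $s$ implies nefness over a neighborhood for $\mathbb R$-divisors on klt pairs, which I would justify through the openness statements there.

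The third step is to perturb the boundary. Put $\Delta_\varepsilon := (1-\varepsilon)\Delta + \varepsilon B$ for small $\varepsilon>0$. Since $B$ is effective and $(X,\Delta)$ is klt, $(X,\Delta_\varepsilon)$ remains klt over the relatively compact set for $0<\varepsilon\ll 1$. Moreover
\[
-(K_X+\Delta) - (K_X+\Delta_\varepsilon)\cdot 0 \quad\text{and}\quad -(K_X+\Delta_\varepsilon) \sim_{\mathbb R,f} -(1-\varepsilon)(K_X+\Delta) + \varepsilon A .
\]
Since $-(K_X+\Delta)$ is nef and $A$ is ample, $-(K_X+\Delta_\varepsilon)$ is $f$-ample.

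The final step is to apply the basepoint-free theorem. Take $L = -(K_X+\Delta)$. Then $2L - (K_X+\Delta_\varepsilon)\cdot 0$; more precisely, $aL-(K_X+\Delta_\varepsilon) = aL + L_\varepsilon$ with $L_\varepsilon$ ample, so it is nef and big for every $a>0$. The $\mathbb R$-divisor version of the basepoint-free theorem for klt pairs then gives that $L$ is $f$-semiample over $U$. If only the $\mathbb Q$-Cartier version is available, I would first write $L$ as a positive combination of $\mathbb Q$-Cartier divisors $L_i$ that are nef over $U$. Each $L_i$ lies in the same minimal face as $L$ of the (locally rational polyhedral) $(K_X+\Delta_\varepsilon)$-negative part of the cone. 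This rationality is again supplied by the cone theorem. Applying the theorem to each $L_i$ then yields semiampleness of $L$.
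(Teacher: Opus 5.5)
Your Steps 1, 3 and 4 are the paper's argument. You write $-K_X\sim_{\mathbb Q,f}A+B$, pass to $\Delta_\varepsilon=(1-\varepsilon)\Delta+\varepsilon B$, note that $-(K_X+\Delta_\varepsilon)\sim_{\mathbb R,f}-(1-\varepsilon)(K_X+\Delta)+\varepsilon A$, and invoke the analytic basepoint-free theorem for $\mathbb R$-divisors.

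The problem is Step 2. Your Steps 3 and 4, as phrased over $U$, depend on it, and its justification fails. Nefness of $-(K_X+\Delta)$ means $(K_X+\Delta)\cdot C\le 0$ for curves $C$ in fibers. So the $(K_X+\Delta)$-negative extremal rays supplied by the cone theorem for $(X,\Delta)$ are exactly the harmless ones. A curve near $s$ that destroys nefness is $(K_X+\Delta)$-positive and lies in $\overline{NE}_{K_X+\Delta\ge 0}$, about which that cone theorem says nothing. Discarding rays ``not realized over $s$'' therefore proves nothing.

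Nor can you appeal to a general principle that nefness over $s$ of an $\mathbb R$-divisor on a klt pair spreads to a neighborhood. As you note yourself, the relative cone can jump, and no such openness statement is available in general. In the present Fano-type situation the propagation is true, but it is a consequence of the lemma you are proving. Proving it directly would require the cone and contraction theorems for $K_X+\Delta_\varepsilon$, which is anti-ample near $s$, so that the whole relative cone is $(K_X+\Delta_\varepsilon)$-negative. That amounts to reproving the basepoint-free theorem.

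The remedy is to delete Step 2. The basepoint-free theorem in \cite[Theorem 8.1]{fujino-minimal} and \cite[Theorem 5.3.1]{fujino-cone} is formulated relative to a compact subset $W\subset S$. If $D$ is nef over $W$ and $aD-(K_X+\Delta_\varepsilon)$ is ample (or nef and big) over $W$, then $D$ is semiample over some open neighborhood of $W$. Take $W=\{s\}$ and $D=-(K_X+\Delta)$. All you need is that $-(K_X+\Delta_\varepsilon)$ is ample over $s$. This is immediate, since it is $\mathbb R$-linearly equivalent over $S$ to the sum of a divisor nef over $s$ and an $f$-ample one. Relative ampleness, unlike nefness, then automatically holds over a neighborhood of $s$.

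This is precisely the paper's proof: the passage from $s$ to a neighborhood is part of the conclusion of the cited theorem, not a hypothesis you must verify. For the same reason, the reduction to $\mathbb Q$-divisors sketched at the end of Step 4 is unnecessary, since the cited results already give the $\mathbb R$-divisor version.
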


\begin{proof}[Proof of Lemma \ref{p-lem2.7}]
We may shrink $S$ around $s$ without further mention. 
Since $-K_X$ is $f$-big, we can write 
\[
-K_X \sim_{\mathbb{Q}, f} A + B,
\] 
where $A$ is an $f$-ample $\mathbb{Q}$-Cartier $\mathbb{Q}$-divisor 
and $B$ is an effective $\mathbb{Q}$-divisor. We set  
\[
\Theta := (1-\varepsilon)\Delta + \varepsilon B.
\] 
For sufficiently small $\varepsilon > 0$, the pair $(X, \Theta)$ remains kawamata log terminal.  
Moreover, we have 
\[
-(K_X + \Theta)  \sim_{\mathbb{Q}, f} -(1-\varepsilon)(K_X + \Delta) + \varepsilon A,
\] 
which is $f$-ample. 
Hence, by the basepoint-free theorem for 
$\mathbb R$-divisors in the complex analytic setting (see \cite[Theorem 8.1]{fujino-minimal}, and \cite[Theorem 5.3.1]{fujino-cone}), 
$-(K_X + \Delta)$ is $f$-semiample over some open neighborhood of $s \in S$, 
as desired.
\end{proof}

We next recall the notions of uniruledness, rational connectedness, 
and rational chain connectedness, which are central to the 
statements and arguments of this paper.

\begin{defn}[{Uniruledness, see \cite[Chapter~IV, 1.1 Definition]{kollar}}]\label{p-def2.8}
Let $X$ be an algebraic variety.
We say that $X$ is {\em uniruled} if there exist an algebraic 
variety $Y$ of dimension $\dim X - 1$ and a dominant rational map
\[
\mathbb{P}^1 \times Y \dashrightarrow X.
\]
\end{defn}

\begin{defn}[{Rational connectedness, see \cite[Chapter~IV, 3.6 
Proposition]{kollar}}]\label{p-def2.9}
Let $X$ be a compact complex variety. 
We say that $X$ is {\em rationally connected} if, for two general 
points $x_1, x_2 \in X$, there exists an irreducible rational curve
$C \subset X$ containing both $x_1$ and $x_2$.
\end{defn}

\begin{defn}[Rational chain connectedness]\label{p-def2.10}
Let $X$ be a compact complex analytic space.
We say that $X$ is {\em rationally chain connected} if, for any two points
$x_1, x_2 \in X$, there exists a connected curve $C \subset X$ such that
$x_1, x_2 \in C$ and every irreducible component of $C$ is rational.
\end{defn}

The following theorem is due to Koll\'ar, Miyaoka, and Mori.

\begin{thm}[{see \cite[Chapter~IV, 3.10 Theorem]{kollar}}]\label{p-thm2.11}
Let $X$ be a smooth projective variety.
Then $X$ is rationally chain connected if and only if $X$ is
rationally connected.
\end{thm}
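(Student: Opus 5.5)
Since this is a classical theorem of Koll\'ar, Miyaoka and Mori, I would follow the deformation-theoretic argument of \cite[Chapter~IV]{kollar} and treat the two implications separately. Throughout, $X$ is a smooth projective variety of dimension $n\ge 1$; the case $n=0$ is trivial.

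\emph{Rationally connected implies rationally chain connected.} Consider the countably many components of the Chow (or Hom) scheme parametrizing rational curves of fixed degree, together with the corresponding two-pointed families. Each gives a constructible subset of $X\times X$ consisting of pairs joined by such a curve. By hypothesis their union contains a dense open set minus a countable union of proper closed subsets. A Baire category argument then shows that a single family $V$ of rational curves of bounded degree has this image dense in $X\times X$. Now take arbitrary $x_1,x_2\in X$ and a sequence of general pairs in the image converging to $(x_1,x_2)$. By properness of the Chow variety, the corresponding curves of bounded degree have a limit cycle through $x_1$ and $x_2$. That cycle is connected, and each of its components is rational, because limits of rational curves are unions of rational curves. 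So $X$ is rationally chain connected.

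\emph{Rationally chain connected implies rationally connected.} The same countability argument gives a family of chains of length at most $m$, built from finitely many families $V_1,\dots,V_m$ of rational curves, such that a general pair $(x_1,x_2)\in X\times X$ is joined by a chain of these. I would arrange, by induction along the chain, that every link is a \emph{free} rational curve. The first link passes through the general point $x_1$, so it is a general member of a family whose curves sweep out the locus $\overline{\bigcup V_1}$. In characteristic $0$, generic smoothness shows that a rational curve through a general point of the locus swept by its family is free on that locus. If the loci were proper, the points reachable from $x_1$ would lie in a proper subvariety, contradicting that $x_2$ is general. One then re-chooses the families so that each link is free on $X$ itself; this is the rationally-chain-connected quotient argument of \cite[IV.4]{kollar}. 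Once all links are free, the comb and smoothing results \cite[II.7.6, II.7.9]{kollar} deform the chain, keeping $x_1$ fixed, into a single free rational curve through $x_1$ and a point near $x_2$. Hence a general point $x_1$ has an open subset of $X$ joined to it by single irreducible rational curves. From this, \cite[IV.3.7]{kollar} shows that $X$ contains a very free rational curve, and very free curves connect any two general points. So $X$ is rationally connected.

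The main obstacle is the freeness step. Going from ``general pairs are chain-connected'' to ``general pairs are connected by chains of free curves whose nodes are at general points'' needs careful bookkeeping of the families and of genericity at each node. I would handle it by induction on the number of links, using the maximal rationally-chain-connected fibration of \cite[IV.4.16]{kollar} to see that the orbit of a general point is all of $X$.
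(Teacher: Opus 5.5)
The paper gives no proof of this statement; it is quoted from \cite[Chapter~IV, 3.10 Theorem]{kollar}, so the question is whether your outline stands as a proof. Your direction ``rationally connected $\Rightarrow$ rationally chain connected'' (countability, properness of the Chow variety, and the fact that limits of rational curves are chains of rational curves) is fine.

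The converse has a genuine gap, exactly at the step you flag yourself. Your justification that the links can be taken free is ``if the loci were proper, the points reachable from $x_1$ would lie in a proper subvariety, contradicting that $x_2$ is general.'' This is false, because a chain can enter a proper subvariety through one link and leave it through another. For instance, let $X$ be the blow-up of $\mathbb P^2$ at a point $p$, with exceptional curve $E$. Two general points $x_1,x_2$ are joined by $L_1\cup E\cup L_2$, where $L_i$ is the strict transform of the line through $p$ and $x_i$. This family of chains dominates $X\times X$. Yet the middle family sweeps out only $E$, and $E$ is not free, since $T_X|_E\cong\mathcal O(2)\oplus\mathcal O(-1)$. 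Genericity of the endpoints makes only the first and last links free, because they pass through very general points; it says nothing about the interior links. ``Free on the swept locus'' does not help either: smoothing a chain requires $H^1$-vanishing for $f^*T_X$ on the chain, i.e.\ freeness with respect to $T_X$.

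Your proposed repair, re-choosing the families via the rationally chain connected quotient of \cite[Chapter~IV, Section~4]{kollar}, is circular. The quotient with respect to all rational curves is a point by hypothesis. The statement that the quotient with respect to chains of \emph{free} curves is a point is precisely what has to be proved, and it is equivalent to the conclusion once smoothing is available.

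What is missing is a statement of the following kind. On a smooth projective $X$ in characteristic zero, let $x$ be very general and let $F$ be the closure of the set of points joined to $x$ by chains of free rational curves. Then every rational curve meeting $F$ is contained in $F$, so no non-free curve can carry a chain out of $F$. This is the substantive content of the Koll\'ar--Miyaoka--Mori theorem, and your outline does not supply it. In the example above one must replace the given chain by a different one (there, a single line), and showing that such a replacement always exists is the theorem itself. Granting that statement, the rest of your argument goes through: smoothing a chain of free curves with $x_1$ fixed, then passing to very free curves.
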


We record an elementary lemma.

\begin{lem}\label{p-lem2.12}
Let $g\colon Z \to Y$ and $f\colon Y \to X$ be 
proper surjective morphisms of complex analytic spaces. 
Let $P$ be a point of $X$. 
If $(f\circ g)^{-1}(P)$ is rationally chain connected, then 
$f^{-1}(P)$ is also rationally chain connected. 
\end{lem}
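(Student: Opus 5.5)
The plan is to push rational chains on $(f\circ g)^{-1}(P)$ forward along $g$, using that images of rational curves are rational curves or points. Set $F:=(f\circ g)^{-1}(P)=g^{-1}(f^{-1}(P))$. Since $g$ is surjective, $g(F)=f^{-1}(P)$. Both $F$ and $f^{-1}(P)$ are compact analytic spaces, because $f$ and $f\circ g$ are proper.

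Take two points $y_1,y_2\in f^{-1}(P)$. By surjectivity of $g$ we can choose $z_1,z_2\in F$ with $g(z_i)=y_i$. Since $F$ is rationally chain connected (Definition \ref{p-def2.10}), there is a connected compact curve $C\subset F$ with $z_1,z_2\in C$ and every irreducible component $C_j$ of $C$ rational.

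Now consider $g(C)\subset f^{-1}(P)$. Because $g$ is proper, Remmert's proper mapping theorem makes $g(C)$ a compact analytic subset. It is connected, being a continuous image of a connected set, and it contains $y_1,y_2$. Each $g(C_j)$ is irreducible of dimension at most one, so it is either a point or an irreducible curve. In the curve case, $g(C_j)$ is dominated by $C_j$, hence by $\mathbb P^1$ via the normalization $\mathbb P^1\to C_j$. Its normalization is then a compact Riemann surface dominated by $\mathbb P^1$, which has genus $0$ by Riemann--Hurwitz or Lüroth, so $g(C_j)$ is rational.

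If $y_1=y_2$ there is nothing to prove. Otherwise $g(C)$ is not a single point, so it is a connected curve whose irreducible components are exactly the $g(C_j)$ that are not points. I have to check that discarding the point-images keeps the union connected: each such point lies on $g(C)$, and $g(C)$ is the union of the curve images together with these finitely many points. Since $g(C)$ is connected and of pure dimension one, no isolated point can survive, so every point-image already lies on some curve image. Hence $g(C)$ is a connected curve through $y_1,y_2$ with rational components, which proves the claim.

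The only step that needs care is this bookkeeping of components that collapse to points, together with the rationality of images of rational curves. Both are standard, so I expect no real obstacle.
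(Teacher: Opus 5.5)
Your proposal is correct and takes the same approach as the paper. The paper's proof is the one-line observation that $(f\circ g)^{-1}(P)\to f^{-1}(P)$ is surjective, and you have written out the details it leaves implicit: lifting the two points, applying Remmert's theorem, checking that images of rational curves are rational, and discarding the components that are contracted to points. One step is loosely phrased: ``pure dimension one'' is what you are trying to establish, so you cannot invoke it. The intended argument still works: a point-image lying on no curve image would be a clopen singleton in the connected set $g(C)$, which is impossible because $g(C)$ contains the distinct points $y_1$ and $y_2$.
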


\begin{proof}[Proof of Lemma~\ref{p-lem2.12}]
This follows since the natural morphism 
\[
(f\circ g)^{-1}(P) \to f^{-1}(P)
\]
is surjective.
\end{proof}

\begin{defn}[{\cite[Definition~1.1]{hacon-mckernan}}]\label{p-def2.13}
Let $X$ be a compact complex analytic space, and let
$V \subset X$ be a closed analytic subset. 
We say that $X$ is {\em rationally chain connected modulo $V$} if
\begin{itemize}
\item[(1)]
$V = \emptyset$ and $X$ is rationally chain connected, or
\item[(2)]
$V \neq \emptyset$ and for every $x \in X$, there exist a connected
pointed curve $C$ with marked points $0, \infty \in C$, such that
every irreducible component of $C$ is rational, and a morphism
$h_x \colon C \to X$ satisfying $h_x(0) = x$ and $h_x(\infty) \in V$.
\end{itemize}
\end{defn}

We prove a useful lemma for later use.

\begin{lem}\label{p-lem2.14}
Let $X$ be a smooth projective variety and let $V \subset X$ be a Zariski closed subset.
Let $f \colon Y \to X$ be a composite of blow-ups along smooth centers.
Then $X$ is rationally chain connected modulo $V$ if and only if $Y$ is rationally chain connected modulo $f^{-1}(V)$.
\end{lem}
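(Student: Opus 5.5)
By induction on the number of blow-ups, we may assume that $f\colon Y\to X$ is a single blow-up along a smooth center $Z\subset X$. The key facts are that $f$ is surjective, and that every fiber of $f$ is either a point or a projective space $\mathbb P^{r}$ with $r=\operatorname{codim}Z-1$. In particular, every fiber is rationally chain connected, and any two points of a fiber can be joined by a single line in that fiber. We treat the cases $V=\emptyset$ and $V\neq\emptyset$ separately, matching the two clauses of Definition~\ref{p-def2.13}. Note that $f^{-1}(V)=\emptyset$ if and only if $V=\emptyset$, so the cases match on both sides.

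\emph{Case $V=\emptyset$.} If $Y$ is rationally chain connected, then so is $X$, by Lemma~\ref{p-lem2.12} applied with $g=f$ and the structure map to a point (or directly, since images of rational chains are rational chains or points). For the converse, we use Theorem~\ref{p-thm2.11}: $X$ rationally chain connected implies $X$ rationally connected. Rational connectedness is a birational invariant of smooth projective varieties, so $Y$ is rationally connected, and in particular rationally chain connected. Alternatively, we can argue by hand. Given $y_1,y_2\in Y$, we join $f(y_1)$ and $f(y_2)$ by a chain of rational curves $C_i$ in $X$, and we may assume no $C_i$ is contained in $Z$. The strict transforms $\widetilde C_i$ are rational curves. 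Consecutive strict transforms may fail to meet, but their meeting points $f^{-1}(C_i\cap C_{i+1})$ lie in a common fiber of $f$, which is $\mathbb P^r$, so the gaps can be bridged by lines in that fiber. However, arranging that no $C_i$ lies in $Z$ needs a deformation argument, so the route through Theorem~\ref{p-thm2.11} is the one we will follow.

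\emph{Case $V\neq\emptyset$.} First suppose $Y$ is rationally chain connected modulo $f^{-1}(V)$. Given $x\in X$, pick $y\in f^{-1}(x)$ and a chain $h_y\colon C\to Y$ with $h_y(0)=y$ and $h_y(\infty)\in f^{-1}(V)$. Then $f\circ h_y$ is a morphism from a chain of rational curves to $X$ with $0\mapsto x$ and $\infty\mapsto V$, which is exactly what Definition~\ref{p-def2.13} requires; contracted components cause no problem because $h$ is only required to be a morphism. Conversely, suppose $X$ is rationally chain connected modulo $V$, and let $y\in Y$. Take $h\colon C\to X$ with $h(0)=f(y)$ and $h(\infty)\in V$. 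We lift $h$ component by component: the restriction of $f^{-1}$ to each irreducible rational component is a rational map from a smooth curve (after normalization) to $Y$, and it extends to a morphism because $Y$ is projective. This produces lifts $\widetilde h_i$ of the components. Consecutive lifts, as well as the starting point $y$ and the first lift, have their relevant points in a common fiber of $f$ over a single point of $X$. Each such fiber is a point or $\mathbb P^r$, so we insert lines in the fiber to connect them, and similarly at the node points. The resulting tree is again a connected chain of rational curves with $0\mapsto y$, and its last point maps to $V$, hence lies in $f^{-1}(V)$.

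The main technical point is this lifting step. Components of $C$ mapping into $Z$, or contracted components, do not lift canonically, but we can replace them by arbitrary lifts (of normalizations) or discard contracted components, and fill all discrepancies with lines in the projective-space fibers. Being explicit that the glued object is a connected pointed curve with rational components, so that it satisfies Definition~\ref{p-def2.13}, finishes the argument. Induction on the number of blow-ups then gives the lemma.
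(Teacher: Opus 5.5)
Your proposal is correct and is essentially the paper's argument. After reducing to a single blow-up, the \emph{if} direction is composition with $f$, and the \emph{only if} direction lifts the chain component by component, using strict transforms and the projective-space-bundle structure of $\Exc(f)\to Z$, and bridges the gaps by lines in the fibers of $f$. A component lying in $Z$, with normalization $\nu\colon\mathbb{P}^1\to Z$, lifts through a section of $\mathbb{P}(\nu^*N_{Z/X})\to\mathbb{P}^1$, which exists because vector bundles on $\mathbb{P}^1$ split; this justifies your arbitrary lifts.

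Your separate treatment of $V=\emptyset$ via Theorem~\ref{p-thm2.11} and birational invariance of rational connectedness is valid. However, no deformation argument is needed there either, since the same lifting applied to a chain joining $f(y_1)$ and $f(y_2)$ works verbatim.
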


\begin{proof}
The ``if'' direction is obvious from the definition.
We prove the ``only if'' direction.
By induction on the number of blow-ups, we may assume that
$f \colon Y \to X$ is the blow-up of $X$ along a smooth center
$Z \subset X$.
Then the exceptional locus $\Exc(f)$ is a projective space bundle over
$Z$.
Let $y \in Y$ and put $x=f(y)$.
Since $X$ is rationally chain connected modulo $V$, there exist a connected pointed curve $C$ with marked points $0,\infty\in C$, whose irreducible components are rational, and a morphism
$h\colon C\to X$ such that $h(0)=x$ and $h(\infty)\in V$. 
By replacing $C$ with its image $h(C)$, we may assume that
$h\colon C\to X$ is a closed embedding.
Since $\Exc(f)\to Z$ is a projective space bundle,
we can find a connected pointed curve $\widetilde C$
with marked points $0,\infty\in \widetilde C$,
whose irreducible components are rational,
and a closed embedding
$\widetilde h\colon \widetilde C\to Y$
such that
$f\circ\widetilde h=h\circ(f|_{\widetilde C})$,
$\widetilde h(0)=y$, and
$\widetilde h(\infty)\in f^{-1}(V)$.
\[
\xymatrix{
\widetilde C\ar[d]_{f|_{\widetilde C}}\ar@{^{(}->}[r]^-{\widetilde h}
&Y\ar[d]^-f\\
C\ar@{^{(}->}[r]_-h
&X
}
\]
Thus $Y$ is rationally chain connected modulo $f^{-1}(V)$.
\end{proof}

We recall two standard results that will be used later. 
Since they are well known, we state them without proof.

\begin{thm}[{\cite[Corollary~0.3]{bdpp}}]\label{p-thm2.15}
Let $Z$ be a smooth projective variety.
Then $Z$ is uniruled if and only if its canonical divisor $K_Z$ is not
pseudo-effective.
\end{thm}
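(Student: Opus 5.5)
The statement in question is Theorem~\ref{p-thm2.15}: a smooth projective variety $Z$ is uniruled if and only if $K_Z$ is not pseudo-effective. The paper cites it from \cite{bdpp} without proof. I would prove it by treating the two implications separately. The easy one is that uniruledness forces $K_Z$ to fail to be pseudo-effective. The deep one is the converse.

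\textbf{The easy direction.} Suppose $Z$ is uniruled. By the bend-and-break theory in \cite[Chapter~IV]{kollar}, there is a family of rational curves $f\colon \mathbb P^1\to Z$ through a general point of $Z$. A general member $f$ of this family is free, so $f^*T_Z$ is nef on $\mathbb P^1$. Moreover, $f^*T_Z$ contains $T_{\mathbb P^1}\simeq\mathcal O(2)$ as a subsheaf. It follows that $K_Z\cdot f_*[\mathbb P^1]=-\deg f^*T_Z\le -2<0$. Since these curves cover $Z$, the class $C=f_*[\mathbb P^1]$ is movable. A pseudo-effective divisor pairs non-negatively with any curve that moves in a family covering $Z$. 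Therefore $K_Z$ is not pseudo-effective.

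\textbf{The hard direction.} The key input is the duality theorem of \cite{bdpp}: the cone of pseudo-effective divisors is dual to the closed cone of movable curves, where movable classes are limits of $\mu_*(A_1\cdots A_{n-1})$ with $\mu\colon Z'\to Z$ a modification and $A_i$ ample on $Z'$. Suppose $K_Z$ is not pseudo-effective. By duality there is a movable class $\alpha$ with $K_Z\cdot\alpha<0$. Because $K_Z\cdot\alpha<0$ is an open condition and movable classes are limits of the above form, I may take $\alpha=\mu_*(A_1\cdots A_{n-1})$ itself. Pulling back gives $K_{Z'}=\mu^*K_Z+E$ with $E\ge 0$ exceptional, and $E\cdot A_1\cdots A_{n-1}\ge0$. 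The projection formula then does not immediately give negativity on $Z'$, so instead I work with the complete intersection curve $C\subset Z$ cut out by the images of general members of $|mA_i|$. This curve passes through a general point of $Z$ and satisfies $K_Z\cdot C<0$, since it avoids the codimension-two indeterminacy of $\mu^{-1}$.

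I then apply Miyaoka--Mori bend-and-break \cite[Chapter~IV]{kollar}. For a curve through a general point $x$ with $K_Z\cdot C<0$, there is a rational curve through $x$ of degree bounded in terms of $-K_Z\cdot C$. Since $x$ is general, $Z$ is uniruled. The one technical point is that this argument uses the char $p$ Frobenius trick. Hence it requires $C$ to be a smooth curve moving in a family covering $Z$. This holds for general complete intersections of very ample divisors on $Z'$, after pushing forward. Such a curve lies in the locus where $\mu$ is an isomorphism, so no extra hypotheses are needed.

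\textbf{The main obstacle} is the duality theorem of \cite{bdpp} itself. Its proof uses Demailly's transcendental holomorphic Morse inequalities to show that any class on the boundary of the pseudo-effective cone is orthogonal to some movable class. I would not reprove this. I would cite it, exactly as the paper does, and only carry out the reduction above to Miyaoka--Mori.
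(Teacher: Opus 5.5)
\textbf{Comparison with the paper.} The paper gives no proof of this statement; it quotes it from \cite{bdpp}. Your outline is the argument by which \cite{bdpp} deduce their Corollary~0.3 from their duality theorem: duality between the pseudo-effective cone and the cone of movable curves, followed by bend-and-break along the covering family $\mu(C')$. Your easy direction is fine. However, one step of your hard direction rests on a false claim.

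\textbf{The false step.} You assert that the general complete intersection curve $C'=D_1\cap\cdots\cap D_{n-1}$, with $D_i\in|mA_i|$, lies in the locus where $\mu$ is an isomorphism, so that $C=\mu(C')$ avoids the indeterminacy of $\mu^{-1}$. This fails as soon as $\mu$ is not an isomorphism. In that case $\Exc(\mu)$ contains a prime divisor $D$ (by purity, since $Z$ is smooth). The set $D\cap D_1\cap\cdots\cap D_{n-1}$ is then nonempty, because the $D_i|_D$ are $n-1$ ample divisors on the $(n-1)$-dimensional projective variety $D$. This is just the strict inequality $E\cdot A_1\cdots A_{n-1}>0$ for $E\neq 0$, which you yourself wrote down in weak form.

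\textbf{The fix.} Nothing actually depends on the false claim. You never need negativity of $K_{Z'}$ on $C'$, so the detour after ``the projection formula does not immediately give negativity on $Z'$'' is unnecessary. The projection formula gives directly
\[
K_Z\cdot \mu_*[C'] = \mu^*K_Z\cdot C' = m^{n-1}\,(K_Z\cdot\alpha) < 0 .
\]
Mori's bend-and-break \cite[Chapter~II]{kollar} then applies to the morphism $\mu|_{C'}\colon C'\to Z$ from a smooth projective curve. It only requires $Z$ to be smooth along $\mu(C')$, which is automatic here. It yields, for an ample $H$ on $Z$, a rational curve through every point of $\mu(C')$ whose $H$-degree is at most $2n\,(H\cdot\mu_*[C'])/(-K_Z\cdot\mu_*[C'])$. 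This bound is constant in the family.

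The fact that the curves $\mu(C')$ cover $Z$ is used only at the last stage, to get rational curves through a general point and hence uniruledness \cite[Chapter~IV]{kollar}. It plays no role in the characteristic $p$ argument itself, contrary to what you suggest. With the projection formula replacing your avoidance claim, the proof is complete modulo the duality theorem of \cite{bdpp}.
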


\begin{thm}[{\cite[Corollary~1.4]{ghs}}]\label{p-thm2.16}
Let $X$ be a smooth projective variety, and let
$\phi\colon X \dashrightarrow Z$
be its maximal rationally connected fibration {\em{(}}{\em{MRC fibration}}, for short{\em{)}}.
Then $Z$ is not uniruled.
\end{thm}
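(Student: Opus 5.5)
The plan is to argue by contradiction, using the Graber--Harris--Starr theorem that a family of rationally connected varieties over a curve has a section. Throughout, ``very general'' means outside a countable union of proper closed subsets.

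First fix a good model of the fibration. Replace $X$ by a resolution of the indeterminacy of $\phi$, followed by a resolution of singularities, and replace $Z$ by a smooth projective model. Rational connectedness of general fibers is a birational notion, and uniruledness of $Z$ is a birational invariant. So we may assume that $\phi\colon X\to Z$ is a surjective morphism between smooth projective varieties with connected fibers. Two defining properties of the MRC fibration are then used:
\begin{itemize}
\item[(a)] a general fiber $X_z$ is rationally connected;
\item[(b)] for a very general $z\in Z$, every rational curve in $X$ meeting $X_z$ is contained in $X_z$.
\end{itemize}

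Suppose, for a contradiction, that $Z$ is uniruled. Then through a very general point $z\in Z$ there is a rational curve $C\subset Z$, and by moving it in its covering family we may assume that a general point of $C$ is a general point of $Z$. Let $\nu\colon \mathbb P^1\to C$ be the normalization. Let $X_C$ be the main component of $X\times_Z\mathbb P^1$, and take a resolution $\widetilde X_C\to X_C$. This gives a proper morphism $\widetilde X_C\to \mathbb P^1$ whose general fiber is birational to a general fiber of $\phi$, hence is rationally connected by (a). By the main theorem of Graber--Harris--Starr, this family has a section $\sigma\colon\mathbb P^1\to\widetilde X_C$.

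Now produce the forbidden rational curves. Let $x\in X_z$ be a very general point and let $w\in C$ be a general point with $w\neq z$. Join $x$ to $\sigma(z)$ by a rational curve inside the rationally connected fiber over $z$. Then follow the rational curve $\sigma(\mathbb P^1)$ to $\sigma(w)$. The image in $X$ of this chain is a connected chain of rational curves starting at $x\in X_z$ and containing a rational curve, namely the image of $\sigma(\mathbb P^1)$, that meets $X_z$ but is not contained in it. This contradicts (b), so $Z$ is not uniruled. (Equivalently, the total space $\widetilde X_C$ is rationally connected, so rational equivalence classes would be strictly larger than the MRC fibers.)

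The main obstacle is the existence of the section, that is, the Graber--Harris--Starr theorem itself; this is deep and I would simply invoke it. The remaining care is in the genericity bookkeeping: one must ensure that the curve $C$ can be chosen through a very general point $z$, so that (b) applies there, while its general point is general enough for the fibers over $C$ to be rationally connected. Since $Z$ is covered by rational curves, this can be arranged by choosing $C$ in a dominating family of rational curves on $Z$.
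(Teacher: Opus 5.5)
The paper gives no proof of this statement and simply quotes Corollary~1.4 of Graber--Harris--Starr. Your argument is correct and is the standard deduction of that corollary from the Graber--Harris--Starr section theorem: take a section over a rational curve $C$ through a very general point $z\in Z$, and observe that its image in $X$ is a rational curve meeting $X_z$ without lying in it. Two small points. First, property (b) on your modified model does not follow from birational invariance alone; it follows from the almost-holomorphicity of the MRC fibration, since all modifications can be taken to be isomorphisms over a dense open subset of $Z$, so no exceptional rational curve meets a very general fiber. Second, the chain through $x$ is unnecessary, because the image of $\sigma(\mathbb P^1)$ alone already contradicts (b).
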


We refer the reader to 
\cite[Chapter IV.5, Maximal Rationally Connected Fibrations]{kollar} 
for a comprehensive treatment of MRC fibrations.

\section{Inequality for the Kodaira Dimension}\label{p-sec3}

In this section, we discuss an inequality for the Kodaira dimension that is less widely known and may at first appear technical. For this reason, we present it here in detail. For background on the Kodaira dimension and its basic properties, we refer the reader to \cite{iitaka-conjecture}.

\begin{thm}[{\cite[Lemma~2.9 and Corollary~2.11]{hacon-mckernan-b}}]\label{p-thm3.1}
Let $(X, \Delta)$ be a projective log canonical pair such that 
$\Delta$ is a $\mathbb Q$-divisor. 
Let $\pi\colon X \to Y$ be a surjective morphism onto a smooth projective
variety $Y$ with connected fibers. 
Assume that 
\[
\kappa\bigl(X_y, (K_X + \Delta)|_{X_y}\bigr) \ge 0,
\]
where $X_y$ is a sufficiently general fiber of $\pi$.
Let $H$ be any ample Cartier divisor on $Y$ and let $\varepsilon > 0$
be any rational number.
Then
\begin{equation}\label{p-eq3.1}
\kappa\bigl(X, K_{X/Y} + \Delta + \varepsilon \pi^*H\bigr)
\ge \dim Y.
\end{equation}
In particular, if $K_Y$ is pseudo-effective, then
\begin{equation}\label{p-eq3.2}
\kappa\bigl(X, K_X + \Delta + \varepsilon \pi^*H\bigr)
\ge \dim Y.
\end{equation}
\end{thm}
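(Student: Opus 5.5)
We would derive \eqref{p-eq3.1} from the weak positivity of direct images of relative pluri-log-canonical sheaves, in the spirit of Viehweg and Campana, via a standard covering and fiber-product trick.

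First we reduce to a log smooth situation. Take a log resolution $\mu\colon X'\to X$ of $(X,\Delta)$ and write
\[
K_{X'}+\Delta'=\mu^*(K_X+\Delta)+E,
\]
where $\Delta'$ is the strict transform of $\Delta$ plus the reduced exceptional divisor, and $E\ge 0$ is $\mu$-exceptional. Since $(X,\Delta)$ is log canonical, $\Delta'$ is a boundary with coefficients in $[0,1]$. Moreover, Kodaira dimensions of $K_{X/Y}+\Delta+\varepsilon\pi^*H$ and of the fibers do not change when we pass to $X'$ and add exceptional effective divisors. So we may assume that $X$ is smooth and that $\Delta$ is a simple normal crossing $\mathbb Q$-boundary. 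Fix $m$ sufficiently divisible, so that $m\Delta$ is integral and the general fiber satisfies $h^0(X_y,m(K_X+\Delta)|_{X_y})\ne 0$.

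The key input is that
\[
\mathcal F_m:=\pi_*\mathcal O_X\bigl(m(K_{X/Y}+\Delta)\bigr)
\]
is a nonzero torsion-free sheaf that is weakly positive over a nonempty open set of $Y$. This is the log version of Viehweg's weak positivity theorem, proved by Campana and by Fujino. The sheaf is nonzero by the fiber hypothesis. We prove weak positivity by the usual route: take a cyclic cover associated to a section of $m(K_{X/Y}+\Delta)$, resolve it, and apply Kawamata's semipositivity (Fujita–Kawamata) for variations of Hodge structure to the relative canonical sheaf of the resolved cover.

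By weak positivity, for every $a>0$ there is $b>0$ such that
\[
\widehat{S}^{ab}(\mathcal F_m)\otimes\mathcal O_Y(bH)
\]
is generically globally generated. Choose $a$ with $b/(a m)<\varepsilon$ and take a nonzero section. Using the natural multiplication map $S^{ab}\mathcal F_m\to\pi_*\mathcal O_X(abm(K_{X/Y}+\Delta))$, we obtain many sections of $abm(K_{X/Y}+\Delta)+b\pi^*H$. These sections restrict nontrivially to general fibers. Adding $\pi^*$ of a large multiple of the ample divisor $(ab m\varepsilon - b)H$ then shows that the sections of $N(K_{X/Y}+\Delta+\varepsilon\pi^*H)$ separate points along $Y$ generically. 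This gives Iitaka dimension at least $\dim Y$, which is \eqref{p-eq3.1}.

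For \eqref{p-eq3.2}, assume $K_Y$ is pseudo-effective. Write $K_Y+\tfrac{\varepsilon}{2}H$ as big, hence $\mathbb Q$-linearly equivalent to an effective divisor. Apply \eqref{p-eq3.1} with $\varepsilon/2$:
\[
K_X+\Delta+\varepsilon\pi^*H=(K_{X/Y}+\Delta+\tfrac{\varepsilon}{2}\pi^*H)+\pi^*(K_Y+\tfrac{\varepsilon}{2}H).
\]
Adding an effective divisor does not decrease $\kappa$, so \eqref{p-eq3.2} follows.

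The main obstacle is the weak positivity step in the log canonical, non-klt case with boundary coefficients equal to $1$. The cyclic cover argument must handle the reduced part of $\Delta$, which requires the log (mixed Hodge) version of semipositivity. Our first attempt will be to perturb: replace $\Delta$ by $(1-\delta)\Delta$ plus a small multiple of a general member of $|m(K_X+\Delta)|$ restricted over the generic point. If the fiber $\kappa$ hypothesis survives this perturbation, we land in the klt case, where the classical Viehweg argument applies.
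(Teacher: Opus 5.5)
\textbf{Gap: passing from weak positivity to sections on $X$.} This step breaks when $\pi$ is not equidimensional. Weak positivity gives generic global generation of $\widehat{S}^{ab}(\mathcal F_m)\otimes\mathcal O_Y(bH)$, where $\widehat{S}$ is the \emph{reflexive hull} of the symmetric power. The multiplication map you use, however, is defined on $S^{ab}\mathcal F_m$. On the reflexive hull it only lands in $\bigl(\pi_*\mathcal O_X(abm(K_{X/Y}+\Delta))\bigr)^{\vee\vee}$.

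A global section of that sheaf, twisted by $bH$, is a section of $\mathcal O_X(abm(K_{X/Y}+\Delta)+b\pi^*H)$ only over $\pi^{-1}(Y_0)$. Here $Y_0\subset Y$ is open with $\operatorname{codim}(Y\setminus Y_0)\ge 2$. If $\pi$ is not equidimensional, $X\setminus\pi^{-1}(Y_0)$ may contain prime divisors. Your sections can then have poles along them, with orders growing with $a$ and $b$.

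This is not a technicality. The implication ``$\pi_*\mathcal O_X(L)$ weakly positive and $L$ effective on the general fiber $\Rightarrow$ $\kappa(X,L+\varepsilon\pi^*H)\ge\dim Y$'' is false in general. Take $Y=\mathbb P^2$, $H$ a line, $X$ the blow-up of $Y\times\mathbb P^1$ at a point over $p\in Y$ with exceptional divisor $D$, and $L=-D$. Then $\pi_*\mathcal O_X(kL)=\mathfrak m_p^k$. So $\pi_*\mathcal O_X(L)$ is weakly positive and $L$ is trivial on general fibers. Yet $H^0(X,k(L+\varepsilon\pi^*H))=H^0(\mathbb P^2,\mathcal O(k\varepsilon)\otimes\mathfrak m_p^k)=0$ for $0<\varepsilon<1$. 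Your preliminary passage to a log resolution makes matters worse, since a log resolution is rarely equidimensional over $Y$.

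\textbf{How the paper handles this.} It first proves \eqref{p-eq3.1} when $\pi$ is equidimensional. In that case $\pi^{-1}(Y\setminus Y_0)$ has codimension at least two in the normal variety $X$, so sections extend. It then reduces to that case via Abramovich--Karu. One gets birational maps $h\colon X'\to X$ and $g\colon Y'\to Y$ with the following properties:
\begin{itemize}
\item $\pi'\colon X'\to Y'$ is equidimensional;
\item $X'$ is toroidal with quotient singularities, and $\Exc(h)\cup\Supp h^{-1}_*\Delta$ lies in the toroidal boundary, so $(X',\Delta')$ is log canonical;
\item $K_{X'/Y'}+\Delta'=h^*(K_{X/Y}+\Delta)+E-\pi'^*F$ with $E\ge 0$ $h$-exceptional and $F\ge 0$ $g$-exceptional.
\end{itemize}
Together with $\kappa(Y',g^*H-\varepsilon' H')\ge 0$, this transfers the bound from $X'$ to $X$.

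Alternatively, Theorem~\ref{p-thm3.3} gives generic global generation of $\mathcal O_Y(mH)\otimes\pi_*\mathcal O_X(a(K_{X/Y}+\Delta))$ itself, with $m$ independent of $a$. No reflexive hull appears, and your last step then works verbatim.

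\textbf{Smaller remarks.}
\begin{itemize}
\item Your fallback perturbation to the klt case fails. For $X=Y\times\mathbb P^1$ and $\Delta=Y\times\{0\}+Y\times\{\infty\}$, $K_X+(1-\delta)\Delta$ has negative degree on fibers. On the generic fiber the only member of $|m(K_X+\Delta)|$ is $0$, so adding it changes nothing. The perturbation is unnecessary anyway: Campana's weak positivity holds directly for log smooth log canonical pairs and can simply be cited.
\item Since $b$ depends on $a$, the condition you need is $1/(am)<\varepsilon$, not $b/(am)<\varepsilon$.
\item Your deduction of \eqref{p-eq3.2} from \eqref{p-eq3.1} is correct and coincides with the paper's.
\end{itemize}
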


For the sake of completeness, we give a proof of
Theorem~\ref{p-thm3.1}.

\begin{proof}[Proof of Theorem~\ref{p-thm3.1}]
We divide the proof into three steps.
In Step~\ref{p-step3.1.1}, we prove~\eqref{p-eq3.1} under the additional
assumption that $\pi$ is equidimensional.
In Step~\ref{p-step3.1.2}, we remove this assumption and establish
\eqref{p-eq3.1} in full generality.
Finally, in Step~\ref{p-step3.1.3}, we deduce~\eqref{p-eq3.2}.

\setcounter{step}{0}

\begin{step}\label{p-step3.1.1}
In this step, we prove~\eqref{p-eq3.1} under the extra assumption that
$\pi$ is equidimensional.

Let $a$ be a positive integer such that $a(K_X+\Delta)$ is Cartier and
\[
\pi_*\mathcal{O}_X\bigl(a(K_{X/Y}+\Delta)\bigr)\neq 0.
\]
Then, by \cite[Proposition~9.1]{fujino-weak}, we have
\[
\kappa\bigl(X, a(K_{X/Y}+\Delta)+\pi^*H\bigr)\ge \dim Y.
\] 
It follows from the equality
\[
K_{X/Y}+\Delta+\varepsilon \pi^*H
=
\frac{1}{a}\left(a(K_{X/Y}+\Delta)+\pi^*H\right)
+
\left(\varepsilon-\frac{1}{a}\right)\pi^*H
\]
and the fact that $\pi^*H$ is $\mathbb{Q}$-linearly equivalent to an effective $\mathbb{Q}$-divisor that
\[
\kappa\bigl(X, K_{X/Y}+\Delta+\varepsilon \pi^*H\bigr)\ge \dim Y
\]
for any rational number $\varepsilon>0$.
\end{step}

\begin{step}\label{p-step3.1.2}
In this step, we prove~\eqref{p-eq3.1} in full generality.

By \cite[Theorem 2.1]{ak} (see also
\cite[Theorem 1.1]{adk}), we can construct a commutative diagram
\[
\xymatrix{
X \ar[d]_-{\pi} & X' \ar[l]_-{h} \ar[d]^-{\pi'} \\
Y & Y' \ar[l]^-{g}
}
\]
with the following properties:
\begin{itemize}
\item[(1)]
$\pi'\colon X' \to Y'$ is an equidimensional surjective morphism from a
normal projective variety $X'$ to a smooth projective variety $Y'$,
\item[(2)]
$h$ and $g$ are birational morphisms,
\item[(3)]
$X'$ has only quotient singularities, and there exists a nonempty
Zariski open subset $U_{X'} \subset X'$ such that
$(U_{X'} \subset X')$ is toroidal,
\item[(4)]
$\Exc(h) \cup \Supp h^{-1}_*\Delta \subset X' \setminus U_{X'}$.
\end{itemize}

We write
\[
K_{X'}+\Delta' = h^*(K_X+\Delta)+E,
\]
where $\Delta'$ and $E$ are effective $\mathbb{Q}$-divisors that have no common irreducible components.
Since $(X,\Delta)$ is log canonical, the coefficients of $\Delta'$ are at most one and $E$ is $h$-exceptional.
By property~(3), we have $\Supp \Delta' \subset X' \setminus U_{X'}$, and hence $(X',\Delta')$ is log canonical.
We also write
\[
K_{Y'} = g^*K_Y + F,
\]
where $F$ is an effective $g$-exceptional divisor.
Then
\begin{equation}\label{p-eq3.3}
K_{X'/Y'}+\Delta'
= h^*(K_{X/Y}+\Delta)+E-\pi'^*F.
\end{equation}

Let $H'$ be an ample Cartier divisor on $Y'$, and let $\varepsilon'>0$
be a rational number such that
\begin{equation}\label{p-eq3.4}
\kappa\bigl(Y', g^*H-\varepsilon' H'\bigr)\ge 0.
\end{equation}
Then we obtain
\[
\begin{split}
\kappa\bigl(X, K_{X/Y}+\Delta+\varepsilon \pi^*H\bigr)
&\ge \kappa\bigl(X', K_{X'/Y'}+\Delta'+\varepsilon \pi'^*g^*H\bigr) \\
&\ge \kappa\bigl(X', K_{X'/Y'}+\Delta'
      +\varepsilon\varepsilon'\pi'^*H'\bigr) \\
&\ge \dim Y.
\end{split}
\] 
The first and second inequalities follow from \eqref{p-eq3.3} and \eqref{p-eq3.4}, respectively. The last inequality follows from Step~\ref{p-step3.1.1}, since $\pi'$ is equidimensional.
\end{step}

\begin{step}\label{p-step3.1.3}
In this step, we prove~\eqref{p-eq3.2}.

Assume that $K_Y$ is pseudo-effective.
Let $\varepsilon'$ be a rational number such that
$0<\varepsilon'<\varepsilon$. 
Since $K_Y$ is pseudo-effective,
$K_Y+(\varepsilon-\varepsilon')H$ is $\mathbb{Q}$-linearly equivalent to an effective
$\mathbb{Q}$-divisor.
Since
\[
K_X+\Delta+\varepsilon \pi^*H
= K_{X/Y}+\Delta+\varepsilon' \pi^*H
  + \pi^*\bigl(K_Y+(\varepsilon-\varepsilon')H\bigr),
\]
we have
\[
\kappa\bigl(X, K_X+\Delta+\varepsilon \pi^*H\bigr)
\ge
\kappa\bigl(X, K_{X/Y}+\Delta+\varepsilon' \pi^*H\bigr)
\ge \dim Y,
\]
where the last inequality follows from~\eqref{p-eq3.1}.
This proves~\eqref{p-eq3.2}.
\end{step}

This completes the proof of Theorem~\ref{p-thm3.1}.
\end{proof}

\begin{cor}\label{p-cor3.2}
In Theorem~\ref{p-thm3.1}, the assumption that $(X,\Delta)$ is log
canonical can be replaced by the following slightly weaker condition,
which is required in the proof of Proposition~\ref{p-prop4.1}.
Namely, $\Delta$ is an effective $\mathbb{Q}$-divisor on $X$ such that 
$K_X+\Delta$ is $\mathbb Q$-Cartier and
$(X,\Delta)$ is log canonical over the generic point of $Y$.
\end{cor}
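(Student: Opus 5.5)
The plan is to rerun the proof of Theorem~\ref{p-thm3.1}, noting that log canonicity of $(X,\Delta)$ is used in exactly one place. In Step~\ref{p-step3.1.2} it guarantees that the pair $(X',\Delta')$ on the equidimensional toroidal model is log canonical. Once that is available, Step~\ref{p-step3.1.1}, which relies on \cite[Proposition~9.1]{fujino-weak}, applies to $\pi'\colon X'\to Y'$. Step~\ref{p-step3.1.3} is purely formal. I would therefore modify only Step~\ref{p-step3.1.2}: replace $\Delta'$ by a suitable boundary divisor that is log canonical, and check that this replacement only decreases the Kodaira dimension we want to bound from below.

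Concretely, take the diagram of \cite[Theorem 2.1]{ak} as before. Write
\[
K_{X'}+\Delta'=h^*(K_X+\Delta)+E,
\]
where $\Delta'\ge 0$ and $E\ge 0$ have no common components and $E$ is $h$-exceptional. Since $(X,\Delta)$ is only assumed log canonical over the generic point of $Y$, the coefficients of $\Delta'$ may exceed one, but only along components not dominating $Y'$. Use Definition~\ref{p-def2.1} to split
\[
\Delta'=\Delta'^{b}+\Delta'^{nb}.
\]
Here $\Delta'^{nb}\ge 0$ is supported on divisors that do not dominate $Y'$. The support condition (4) still gives $\Supp\Delta'\subset X'\setminus U_{X'}$, so $(X',\Delta'^{b})$ is log canonical, because $X'$ is toroidal with quotient singularities and $\Delta'^{b}$ is a reduced-or-less boundary supported in the toroidal boundary. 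From \eqref{p-eq3.3} we obtain
\[
K_{X'/Y'}+\Delta'^{b}+\Delta'^{nb}=h^*(K_{X/Y}+\Delta)+E-\pi'^*F.
\]
Since $\Delta'^{nb}\ge 0$, this gives
\[
\kappa\bigl(X, K_{X/Y}+\Delta+\varepsilon\pi^*H\bigr)\ge\kappa\bigl(X', K_{X'/Y'}+\Delta'^{b}+\varepsilon\varepsilon'\pi'^*H'\bigr),
\]
because adding effective divisors only increases $\kappa$, and $E$ is $h$-exceptional and effective. Then apply Step~\ref{p-step3.1.1} to $(X',\Delta'^{b})\to Y'$.

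For Step~\ref{p-step3.1.1}, the fiber hypothesis is also needed. Over a general point $y'\in Y'$, $\Delta'^{nb}$ vanishes on $X'_{y'}$. Moreover, $(K_{X'}+\Delta'^{b})|_{X'_{y'}}$ equals the pullback of $(K_X+\Delta)|_{X_y}$ plus an effective exceptional divisor. Hence $\kappa\ge 0$ still holds on the general fiber, so the nonvanishing of $\pi'_*\mathcal O_{X'}(a(K_{X'/Y'}+\Delta'^{b}))$ follows.

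The step I expect to be the main obstacle is the bookkeeping that makes $\Delta'^{nb}$ vertical. Log canonicity over the generic point of $Y$ controls discrepancies of divisors dominating $Y$. I must check that any component of $\Delta'$ with coefficient $>1$ has center not dominating $Y$, and therefore does not dominate $Y'$, since $g$ is birational. Step~\ref{p-step3.1.3} then goes through verbatim.
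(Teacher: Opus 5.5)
Your proposal is correct. It rests on the same idea as the paper's proof: replace $\Delta$ by a smaller boundary with coefficients truncated at one, observe that the discarded part is vertical, and use that removing an effective divisor can only lower $\kappa$. Because the discarded part is vertical, the general-fiber hypothesis is unaffected.

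The difference is where you perform the truncation. The paper does it at the start. It passes to a log resolution of $(X,\Delta)$ and chooses a log canonical $\Delta'\le\Delta$ that agrees with $\Delta$ over the generic point of $Y$. It then applies Theorem~\ref{p-thm3.1} as a black box and concludes by monotonicity.

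You instead open up Step~\ref{p-step3.1.2} and truncate on the Abramovich--Karu model $X'$. There, property (4) and the toroidal structure already make $(X',(\Delta')^{b})$ log canonical, so no extra resolution is needed.

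The paper's route is more modular: it uses only the statement of Theorem~\ref{p-thm3.1}, so it would survive any other proof of that theorem, e.g.\ via Theorem~\ref{p-thm3.3}. Your route depends on the internal structure of that particular proof. In exchange, it explicitly verifies the nonvanishing of $\pi'_*\mathcal O_{X'}(a(K_{X'/Y'}+(\Delta')^{b}))$ needed to invoke Step~\ref{p-step3.1.1}, which the paper leaves implicit.

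The point you flag as the main obstacle is immediate. Let $P$ be a component of $\Delta'$ with coefficient $>1$. Then $P$ has discrepancy $<-1$, so $h(P)$ lies in the non-lc locus of $(X,\Delta)$. That locus maps into a proper closed subset of $Y$. If $\pi'(P)=Y'$, then $\pi(h(P))=g(\pi'(P))=Y$, which is a contradiction.
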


\begin{proof}[Proof of Corollary~\ref{p-cor3.2}]
By taking a resolution of singularities, we may assume that $X$ is
smooth and that $\Supp \Delta$ is a simple normal crossing divisor on
$X$.
We can choose an effective $\mathbb{Q}$-divisor $\Delta'$ on $X$ such
that $(X,\Delta')$ is log canonical, $\Delta \ge \Delta'$, and
$\Delta=\Delta'$ holds over the generic point of $Y$.
Applying Theorem~\ref{p-thm3.1} to the pair $(X,\Delta')$ and using the
inequality $\Delta \ge \Delta'$, we obtain the desired inequalities.
\end{proof}

We note that Theorem~\ref{p-thm3.1}, and hence Corollary~\ref{p-cor3.2},
ultimately rely on Campana's twisted weak positivity
(see \cite[Theorem~1.1]{fujino-weak}).
Alternatively, one may apply directly the theory of mixed
$\omega$-sheaves developed in \cite{fujino-omega}.
In fact, Theorem~\ref{p-thm3.1} and Corollary~\ref{p-cor3.2} follow
immediately from Theorem~\ref{p-thm3.3} below.
In any case, Theorem~\ref{p-thm3.1} and Corollary~\ref{p-cor3.2} are now
well known to experts.

\begin{thm}[{\cite[Corollary~9.5]{fujino-omega}}]\label{p-thm3.3}
Let $\pi\colon X \to Y$ be a surjective morphism
from a normal projective variety $X$ onto a smooth projective variety
$Y$.
Let $\Delta$ be an effective $\mathbb{Q}$-divisor on $X$ such that
$K_X+\Delta$ is $\mathbb{Q}$-Cartier and $(X,\Delta)$ is log canonical
over the generic point of $Y$.
Let $H$ be an ample Cartier divisor on $Y$.

Then there exists a positive integer $m$ with the following property:
for every integer $a \ge 2$ such that $a(K_X+\Delta)$ is Cartier and
\[
\pi_*\mathcal{O}_X\bigl(a(K_X+\Delta)\bigr) \neq 0,
\]
the sheaf
\[
\mathcal{O}_Y(mH)\otimes
\pi_*\mathcal{O}_X\bigl(a(K_{X/Y}+\Delta)\bigr)
\]
is generically globally generated.
In particular, for every such $a$, we have
\[
\kappa\bigl(X, a(K_{X/Y}+\Delta) + m\pi^*H\bigr) \ge 0.
\]
\end{thm}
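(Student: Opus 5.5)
Theorem~\ref{p-thm3.3} is \cite[Corollary~9.5]{fujino-omega}, so the plan is to reconstruct it from the theory of mixed $\omega$-sheaves rather than from scratch. The strategy is to reduce to a weak positivity statement for direct images of relative pluri-log-canonical sheaves, and then use the standard principle that a weakly positive sheaf becomes generically globally generated after twisting by a fixed multiple of an ample divisor.

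\textbf{Step 1 (reduction to a log resolution).} First I would take a log resolution $\mu\colon X'\to X$ and write $K_{X'}+\Delta'=\mu^*(K_X+\Delta)+E$, with $\Delta'$ and $E$ effective and without common components. Over the generic point of $Y$, the pair is log canonical, so there the coefficients of $\Delta'$ are at most one. Any larger coefficients of $\Delta'$ lie over a proper closed subset of $Y$. After dropping them, we obtain a smaller boundary, and this only shrinks the direct image sheaf. Such a shrinking does not hurt generic global generation, because the sheaves agree generically and the target sheaf contains the smaller one. Since $E$ is effective and $\mu$-exceptional, the direct images satisfy
\[
\pi_*\mathcal O_X\bigl(a(K_{X/Y}+\Delta)\bigr)=\pi'_*\mathcal O_{X'}\bigl(a(K_{X'/Y}+\Delta')\bigr).
\]
Consequently we may assume that $X$ is smooth, that $\Supp\Delta$ is simple normal crossing, and that $\Delta$ is a boundary over the generic point of $Y$.

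\textbf{Step 2 (uniform weak positivity).} The core input is that
\[
\pi_*\mathcal O_X\bigl(a(K_{X/Y}+\Delta)\bigr)\otimes\mathcal O_Y(mH)
\]
is generically globally generated with $m$ independent of $a$. I would obtain this as follows.
\begin{itemize}
\item[(a)] Use the fact (Fujino's mixed $\omega$-sheaf theory, or Campana's twisted weak positivity \cite{fujino-weak}) that $\pi_*\mathcal O_X(a(K_{X/Y}+\Delta))$ is weakly positive for lc pairs, via the semipositivity of $\pi_*\omega_{X/Y}(\lfloor\cdot\rfloor)$-type sheaves coming from variations of mixed Hodge structure.
\item[(b)] Get the \emph{uniform} twist by choosing $m=\dim Y+2$ (up to a fixed multiple of $H$), so that $K_Y+(\dim Y+1)H$ is globally generated, by Castelnuovo--Mumford regularity.
\item[(c)] Apply the standard Viehweg argument to get generic global generation of
\[
\omega_Y\otimes\mathcal O_Y\bigl((\dim Y+1)H\bigr)\otimes\pi_*\omega_{X/Y}(\text{lc boundary}).
\]
Then combine this with the iteration trick: write $a(K_{X/Y}+\Delta)=K_{X/Y}+\Delta_a+(a-1)L$ and use induction on $a$.
\item[(d)] The requirement $a\ge2$ enters because $(a-1)(K_{X/Y}+\Delta)$ is replaced by $\frac{a-1}{a}$ of a section divisor. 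The resulting boundary stays lc generically, exactly as in Viehweg's weak positivity proof.
\end{itemize}

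\textbf{Step 3 (the Kodaira dimension consequence).} This is immediate. A nonzero generically globally generated sheaf has a nonzero section, and that section pulls back to a nonzero section of $\mathcal O_X(a(K_{X/Y}+\Delta)+m\pi^*H)$. Hence
\[
\kappa\bigl(X,a(K_{X/Y}+\Delta)+m\pi^*H\bigr)\ge0.
\]

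The main obstacle is the uniformity of $m$ in Step~2(c). Naive weak positivity only gives generic generation of $S^{b}(\cdot)\otimes H^{b'}$ with constants depending on $a$. I would first try the Popa--Schnell style argument: take $a\ge2$ and $L$ with $aL=a(K_{X/Y}+\Delta)+a\pi^*(K_Y+(\dim Y+1)H)$. Then apply the relative Kawamata--Viehweg vanishing and regularity to
\[
K_X+\Delta+(a-1)L'+\pi^*(\dim Y+1)H,
\]
where $L'$ is made effective via the evaluation map and induction on $a$. This gives $m=a'(\dim Y+1)$-type bounds, which a further trick removes. This step is exactly where the mixed $\omega$-sheaf vanishing of \cite{fujino-omega} is needed to handle non-klt log canonical centers that are not horizontal.
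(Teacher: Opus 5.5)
The paper gives no proof of this statement. It is quoted from \cite[Corollary~9.5]{fujino-omega}, and the surrounding text only says that it comes from the theory of mixed $\omega$-sheaves and implies Theorem~\ref{p-thm3.1}, so your sketch has to stand on its own. Your Steps~1 and~3 are fine: dropping the vertical coefficients $>1$ on a log resolution gives a subsheaf that agrees generically, which is the right direction for generic global generation.

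The gap is in Step~2, and it sits exactly at the one nontrivial point of the statement: that $m$ does not depend on $a$. Write $\mathcal F_a:=\pi_*\mathcal O_X(a(K_{X/Y}+\Delta))$. The mechanism you actually propose is induction on $a$ in the style of Popa--Schnell: start from the minimal $r$ for which $\mathcal F_a\otimes\mathcal O_Y(rH)$ is generically generated, and feed $\frac{a-1}{a}$ of a section divisor back into a vanishing theorem. This can only give a recursion $r\le\frac{a-1}{a}r+c$, that is, $r\le ac$. You note this yourself and then appeal to an unspecified ``further trick''; that trick is the proof.

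Item~(b) is also misdirected. A vanishing-plus-regularity argument naturally yields generic global generation of $\omega_Y\otimes\mathcal O_Y((n+1)H)\otimes\mathcal F_a$, where $n=\dim Y$ and $H$ is free (otherwise replace $H$ by a free multiple). To remove $\omega_Y$ you need $\mathcal O_Y((m-n-1)H-K_Y)$ to be globally generated. Global generation of $K_Y+(n+1)H$ plays no role, and $m$ must depend on $K_Y$ and $H$, not only on $\dim Y$.

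The missing idea is to use the weak positivity from (a) directly inside the vanishing argument, in place of the inductive bound. Weak positivity of $\mathcal F_a$ (\cite[Theorem~1.1]{fujino-weak}) gives some $\beta$, depending on $a$, with $\hat S^{\beta}\mathcal F_a\otimes\mathcal O_Y(\beta H)$ generically globally generated. Via $S^{\beta}\mathcal F_a\to\mathcal F_{a\beta}$ this produces a general $D\in|a\beta(K_{X/Y}+\Delta)+\beta\pi^*H|$, and then $\Theta:=\frac{a-1}{a\beta}D\sim_{\mathbb Q}(a-1)(K_{X/Y}+\Delta)+\frac{a-1}{a}\pi^*H$. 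Now $\beta$ cancels:
\[
a(K_{X/Y}+\Delta)+\pi^*\bigl(K_Y+(n+1-i)H\bigr)\sim_{\mathbb Q}K_X+\Delta+\Theta+\pi^*\bigl((n+1-i-\tfrac{a-1}{a})H\bigr).
\]
Since $n+1-i-\frac{a-1}{a}>0$ for $1\le i\le n$ whatever $a$ is, vanishing plus Castelnuovo--Mumford regularity give generic global generation of $\omega_Y\otimes\mathcal O_Y((n+1)H)\otimes\mathcal F_a$ for all $a$ simultaneously, and $m$ is then chosen as above.

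One technical point needs care. Sections of $\hat S^{\beta}\mathcal F_a$ can acquire poles along divisors of $X$ lying over a codimension-two subset of $Y$, because $\pi_*$ need not be reflexive. Passing first to an equidimensional model, as in Step~\ref{p-step3.1.2} of the proof of Theorem~\ref{p-thm3.1}, is one way to handle this.

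Two smaller corrections:
\begin{itemize}
\item The vanishing theorem must be the log canonical one (Ambro--Fujino, or mixed $\omega$-sheaves). A multiplier ideal would kill sections along the \emph{horizontal} lc centers of $(X,\Delta)$ on every fiber; it is these that matter, not the vertical ones as you wrote.
\item Your claim in (d) that the new boundary ``stays lc generically'' is false. If the general fiber $F$ is the blow-up of an abelian surface at a point and $\Delta=0$, then $\frac{a-1}{a}D_F=(a-1)E$, which is not lc for $a\ge 3$. What is true, and suffices, is that the resulting ideal does not cut down $H^0(F,\mathcal O_F(a(K_F+\Delta_F)))$: on a log resolution every such section vanishes along the fixed part $G$, while only $\frac{a-1}{a}G$ enters the boundary.
\end{itemize}
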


\section{A Criterion due to Hacon--M\textsuperscript{c}Kernan}\label{p-sec4}

In this section, for completeness, we recall the criterion 
of Hacon and M\textsuperscript{c}Kernan for rational chain connectedness 
modulo a subset. 
Our exposition follows \cite[Section~4]{hacon-mckernan}. 
Lemma \ref{p-lem4.6} is new and will play a crucial role 
from the perspective of minimal model theory.

\begin{prop}[{\cite[Proposition~4.1]{hacon-mckernan}}]\label{p-prop4.1}
Let $X$ be a normal projective variety and let $\Delta$ be an effective
$\mathbb Q$-divisor such that $K_X+\Delta$ is $\mathbb Q$-Cartier.
Let $h \colon X \to F$ be a dominant morphism and
$t \colon F \dashrightarrow Z$ a dominant rational map between projective varieties.
Assume that:
\begin{itemize}
\item[(1)]  
The non-klt locus of $(X,\Delta)$ does not dominate $Z$ via the induced map
$t \circ h \colon X \dashrightarrow Z$.
\item[(2)]  
$K_X+\Delta$ has nonnegative Kodaira dimension on the general fiber of
$X \dashrightarrow Z$, that is, if $g \colon Y \to X$ resolves the indeterminacy
of $X \dashrightarrow Z$ and $Y$ is smooth projective, then
$g^*(K_X+\Delta)$ has Kodaira dimension at least zero on the general fiber
of the induced morphism $Y \to Z$.
\item[(3)]  
$\kappa(X,K_X+\Delta)\le 0$.
\item[(4)]  
There exists an effective big $\mathbb Q$-Cartier $\mathbb Q$-divisor $A$ on $F$ satisfying
$h^*A \le \Delta$.
\end{itemize}
Then $Z$ is either a point or uniruled.
\end{prop}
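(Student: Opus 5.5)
We argue by contradiction: assume $Z$ is neither a point nor uniruled, so $\dim Z\ge 1$. By Theorem~\ref{p-thm2.15}, after replacing $Z$ by a resolution (birational modifications of $Z$ preserve both assumptions and condition (1)), $K_Z$ is pseudo-effective. We then aim to contradict (3) by producing $\kappa(X,K_X+\Delta)\ge \dim Z\ge 1$ through Corollary~\ref{p-cor3.2}.

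\emph{Setting up the fibration.} Take a resolution $g\colon Y\to X$ with $Y$ smooth projective such that $\phi\colon Y\to Z$ is a morphism. Replacing $Z$ by a further resolution and using the Stein factorization, we may assume $\phi$ has connected fibers and $Z$ is smooth; non-uniruledness is preserved for a finite cover target since uniruledness descends, and pseudo-effectivity of $K_Z$ persists by Theorem~\ref{p-thm2.15} applied again. Write $K_Y+\Delta_Y=g^*(K_X+\Delta)+E$ with $\Delta_Y,E\ge 0$ without common components and $E$ $g$-exceptional. Since the non-klt locus does not dominate $Z$ (condition (1)), every component of $\Delta_Y$ with coefficient $\ge 1$ maps into a proper subset of $Z$; hence $(Y,\Delta_Y)$ is klt, in particular log canonical, over the generic point of $Z$. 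Condition (2), together with $E\ge0$ being exceptional, gives $\kappa(Y_z,(K_Y+\Delta_Y)|_{Y_z})\ge 0$ for a general fiber $Y_z$. Also $\kappa(Y,K_Y+\Delta_Y)=\kappa(X,K_X+\Delta)$, because $E$ is effective and $g$-exceptional.

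\emph{Absorbing the ample term using (4).} This is where the main difficulty lies. Corollary~\ref{p-cor3.2} with $K_Z$ pseudo-effective gives $\kappa(Y,K_Y+\Delta_Y+\varepsilon\phi^*H)\ge\dim Z$ for an ample $H$ on $Z$ and rational $\varepsilon>0$. We need to trade $\varepsilon\phi^*H$ for part of $\Delta_Y$. Since $A$ is big on $F$, and $F\dashrightarrow Z$ is dominant, we can write $A\sim_{\mathbb Q} \delta\, t^*H+(\text{effective})$ on a resolution of $F$ for small $\delta>0$. Pulling back, $g^*h^*A\ge \delta\phi^*H$ up to $\mathbb Q$-linear equivalence and effective divisors. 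Then set $\Delta'_Y:=\Delta_Y-\delta' g^*h^*A+\delta'(\text{effective }\sim_{\mathbb Q}\ldots)$ carefully; more concretely, choose an effective $G\sim_{\mathbb Q} g^*h^*A-\delta\phi^*H$ and put $\Delta''=\Delta_Y-\eta g^*h^*A+\eta G$ for small rational $\eta>0$. Since $h^*A\le\Delta$, this remains effective, with $K_Y+\Delta''+\eta\delta\phi^*H\sim_{\mathbb Q}K_Y+\Delta_Y$. For $\eta$ small, $(Y,\Delta'')$ is still log canonical over the generic point of $Z$ (choose $G$ general so that, over the generic point, adding $\eta G$ keeps the pair klt), and the fiber Kodaira dimension stays $\ge 0$ because $\Delta''|_{Y_z}$ differs from $\Delta_Y|_{Y_z}$ by $\eta(G-g^*h^*A)|_{Y_z}\sim_{\mathbb Q}0$ on the fiber ($\phi^*H$ is trivial there). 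The subtle points are ensuring that $G$ is general enough for the singularity condition to hold over the generic point of $Z$, and ensuring $\mathbb Q$-coefficients throughout.

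\emph{Conclusion.} Applying Corollary~\ref{p-cor3.2} to $(Y,\Delta'')$ with $\varepsilon=\eta\delta$ yields $\kappa(Y,K_Y+\Delta_Y)\ge\dim Z\ge1$, which contradicts (3). Therefore $Z$ is a point or uniruled.
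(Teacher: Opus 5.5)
Your overall strategy matches the paper's. You use Theorem~\ref{p-thm2.15} to make $K_Z$ pseudo-effective, Kodaira's lemma on a resolution of $F$ to extract a pull-back of an ample divisor on $Z$, and then play Corollary~\ref{p-cor3.2} against condition (3). Your Stein factorization step is a sensible extra precaution, since Theorem~\ref{p-thm3.1} assumes connected fibers.

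However, there is a genuine gap. The step ``Since $h^*A\le\Delta$, this remains effective'' is false in general, and it is exactly where an extra idea is needed. The inequality $h^*A\le\Delta$ only controls the non-exceptional part: $g^{-1}_*h^*A\le g^{-1}_*\Delta\le\Delta_Y$. The pull-back $g^*h^*A$ has positive coefficient along every $g$-exceptional prime divisor whose center lies in $\Supp h^*A$. By contrast, $\Delta_Y$ has coefficient $0$ along every exceptional divisor of nonnegative discrepancy, since those go into $E$.

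For a concrete failure, take the following data: $X=F$ smooth of dimension $n\ge 2$, $h=\mathrm{id}$, and $\Delta=A=cA_0$ with $A_0$ a smooth ample divisor and $0<c<1$. Let $g$ blow up a point of $A_0$, with exceptional divisor $E_1$. Then $\Delta_Y=c\widetilde{A_0}$ and $E=(n-1-c)E_1$, while $g^*h^*A=c\widetilde{A_0}+cE_1$. Hence $\Delta_Y-\eta g^*h^*A$ has coefficient $-\eta c$ along $E_1$ for every $\eta>0$. A resolution of $X\dashrightarrow Z$ will in general have to blow up centers inside $\Supp h^*A$. So your $\Delta''$ need not be effective, and Corollary~\ref{p-cor3.2} cannot be applied to it.

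The missing idea, which is how the paper proceeds, is to enlarge the boundary by a small multiple of the whole exceptional locus before absorbing $\phi^*H$.
\begin{itemize}
\item Take $g$ to be a log resolution, so that $\Exc(g)$ and $\Exc(g)\cup\Supp g^{-1}_*\Delta$ are simple normal crossing divisors. You need this anyway to read off klt-ness over the generic point of $Z$ from the coefficients of $\Delta_Y$.
\item Set $\Gamma:=\Delta_Y+\varepsilon\Exc(g)$ for a small rational $\varepsilon>0$. Then $K_Y+\Gamma=g^*(K_X+\Delta)+E+\varepsilon\Exc(g)$, where $E+\varepsilon\Exc(g)$ is effective and $g$-exceptional.
\item Consequently $\kappa(Y,K_Y+\Gamma)=\kappa(X,K_X+\Delta)$, the Kodaira dimension on the general fiber is still nonnegative, and $(Y,\Gamma)$ is still klt over the generic point of $Z$.
\item Now $\Supp g^*h^*A\subset\Supp g^{-1}_*\Delta\cup\Exc(g)=\Supp\Gamma$. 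Hence $\eta g^*h^*A\le\Gamma$ for $0<\eta\ll 1$, and your construction of $\Delta''$, with $\Gamma$ in place of $\Delta_Y$, goes through.
\end{itemize}

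Finally, the ``generality of $G$'' you flag is not the real subtlety. $G$ may well have fixed components, but because the pair is klt over the generic point of $Z$, adding $\eta G$ for $\eta$ small preserves this regardless of generality.
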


\begin{proof}[Proof of Proposition~\ref{p-prop4.1}]
It suffices to show that $Z$ is a point under the assumption that $Z$ is
not uniruled.
By resolution of singularities, we may assume without loss of
generality that $Z$ is smooth.

Take a resolution of singularities $g\colon Y \to X$ such that the
induced rational map $Y \dashrightarrow Z$ is a morphism
$\psi\colon Y \to Z$, and such that
$E' := \Exc(g)$ and
$\Exc(g) \cup \Supp g^{-1}_*\Delta$ are simple normal crossing divisors
on $Y$. 
After replacing $F$ by a resolution of singularities
$\alpha \colon F'\to F$, we may further assume that $\psi\colon Y\to Z$ factors
through $F'$ as follows:
\[
\xymatrix{
Y \ar[d]_-g\ar[r]\ar@/^12pt/[rr]^\psi& F'\ar[r]\ar[d]_\alpha& Z\ar@{=}[d]\\
X \ar[r]_-h& F\ar@{-->}[r]& Z.
}
\]

We write
\[
K_Y + \Theta = g^*(K_X + \Delta) + E,
\]
where $\Theta$ and $E$ are effective $\mathbb{Q}$-divisors with no common
irreducible components, $g_*\Theta = \Delta$, and $E$ is
$g$-exceptional.
Set
\[
\Gamma := \Theta + \varepsilon E',
\]
where $\varepsilon>0$ is a sufficiently small rational number.

By condition~(1) and the above construction, the pair $(Y,\Gamma)$ is
kawamata log terminal over the generic point of $Z$.
Moreover, by condition~(2), we have
\[
\kappa\bigl(Y_z, (K_Y+\Gamma)|_{Y_z}\bigr) \ge 0,
\]
where $Y_z$ is a sufficiently general fiber of $\psi$.

By construction, the support of $\Gamma$ contains the exceptional locus
of $g$.
Therefore, by condition~(4), $\Gamma$ contains the pull-back of a big
$\mathbb{Q}$-Cartier $\mathbb{Q}$-divisor on $F$. 
Hence, after pulling back to $F'$, $\Gamma$ contains the pull-back of a
big $\mathbb{Q}$-Cartier $\mathbb{Q}$-divisor on $F'$. 
By Kodaira's lemma applied on $F'$, possibly after replacing $\Gamma$ by a
$\mathbb{Q}$-linearly equivalent $\mathbb{Q}$-divisor, we can find an
ample $\mathbb{Q}$-divisor $G$ on $Z$ such that
\[
\psi^*G \le \Gamma.
\]
Since the replacement can be chosen sufficiently small, we may assume that
$(Y,\Gamma)$ remains kawamata log terminal over the generic point of $Z$.

Since $Z$ is not uniruled by assumption, Theorem~\ref{p-thm2.15} implies
that $K_Z$ is pseudo-effective. 
Observe that $(Y,\Gamma-\psi^*G)$ is kawamata log terminal, and hence log
canonical, over the generic point of $Z$. Moreover,
\[
\kappa\bigl(Y_z,(K_Y+(\Gamma-\psi^*G))|_{Y_z}\bigr)
=
\kappa\bigl(Y_z,(K_Y+\Gamma)|_{Y_z}\bigr)
\ge 0,
\]
where $Y_z$ is a sufficiently general fiber of $\psi$.
Therefore, by Corollary~\ref{p-cor3.2}, we obtain
\[
\kappa\bigl(Y, K_Y+\Gamma\bigr)
=
\kappa\bigl(Y, K_Y+(\Gamma-\psi^*G)+\psi^*G\bigr)
\ge \dim Z.
\]

On the other hand,
\[
\kappa\bigl(Y, K_Y+\Gamma\bigr)
= \kappa\bigl(X, K_X+\Delta\bigr) \le 0
\]
by condition~(3).
It follows that $\dim Z = 0$, that is, $Z$ is a point.
This completes the proof of Proposition~\ref{p-prop4.1}.
\end{proof}

\begin{lem}[{\cite[Lemma~4.2]{hacon-mckernan}}]\label{p-lem4.2}
Let $F$ be a projective variety.
\begin{itemize}
\item[(i)]
$F$ is rationally connected if and only if every nonconstant
dominant rational map
$t \colon F \dashrightarrow Z$ of projective varieties has uniruled target.
\end{itemize}

Assume in addition that $F$ is smooth.
\begin{itemize}
\item[(ii)]
$F$ is rationally chain connected modulo $V$ if and only if for every
nonconstant dominant rational map
$t \colon F \dashrightarrow Z$ of projective varieties, the target $Z$
is either uniruled or dominated by $V$.
\end{itemize}
\end{lem}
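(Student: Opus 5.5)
The plan is to reduce both statements to the maximal rationally connected fibration (Theorem~\ref{p-thm2.16}) and to the equivalence between rational chain connectedness and rational connectedness for smooth projective varieties (Theorem~\ref{p-thm2.11}).

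\textbf{Part (i).} For the direction ``$\Rightarrow$'', suppose $F$ is rationally connected and $t\colon F\dashrightarrow Z$ is nonconstant and dominant. Two general points of $Z$ are images of general points of $F$. Those points are joined by an irreducible rational curve avoiding the indeterminacy locus, so its image joins the two points of $Z$. Hence $Z$ is rationally connected. Being positive dimensional, $Z$ is then uniruled.

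For ``$\Leftarrow$'', pass to a resolution $\tilde F\to F$; rational connectedness of $F$ is a birational property for the definition via general points. Let $\phi\colon \tilde F\dashrightarrow Z$ be the MRC fibration of $\tilde F$. By Theorem~\ref{p-thm2.16}, $Z$ is not uniruled. Composing with $F\dashrightarrow\tilde F$ gives a dominant map from $F$, so by hypothesis $\phi$ must be constant. Therefore $Z$ is a point and $\tilde F$, hence $F$, is rationally connected.

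\textbf{Part (ii), direction ``$\Rightarrow$''.} Assume $F$ is smooth and rationally chain connected modulo $V$, and let $t\colon F\dashrightarrow Z$ be nonconstant with $Z$ not uniruled. Resolve $t$ by blow-ups along smooth centers, $g\colon Y\to F$, with $h\colon Y\to Z$. By Lemma~\ref{p-lem2.14}, $Y$ is rationally chain connected modulo $g^{-1}(V)$. Take a general $y\in Y$; a rational chain joins $y$ to a point of $g^{-1}(V)$. Since $Z$ is not uniruled, every rational curve through a general point of $Z$ is contracted. More precisely, rational curves in $Z$ lie in a proper closed subset, and a connected chain starting at a general point must have all its components contracted by $h$. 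So $h(g^{-1}(V))$ contains $h(y)$ for general $y$, and $V$ dominates $Z$.

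The delicate point is the chain: components through points of the fiber lying over the proper closed subset covered by rational curves. To handle it, I use that $\mathrm{RatCurves}$ has countably many components. The union of images of all rational curves in $Z$ is a countable union of proper closed subsets; a very general $h(y)$ avoids it. Then every component of the chain maps to a point, since a rational component meeting the fiber $h^{-1}(h(y))$ and not contracted would give a rational curve through $h(y)$. Connectedness propagates this along the chain. Working with very general points is enough, because domination is a closed condition by properness of $h$.

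\textbf{Part (ii), direction ``$\Leftarrow$''.} Take the MRC fibration $\phi\colon F\dashrightarrow Z$. By Theorem~\ref{p-thm2.16}, $Z$ is not uniruled. If $Z$ is a point, $F$ is rationally connected, hence rationally chain connected, and the conclusion holds (including when $V=\emptyset$). Otherwise the hypothesis gives that $V$ dominates $Z$. Resolve $\phi$ by $g\colon Y\to F$, $h\colon Y\to Z$, arranged so that the very general fiber of $h$ is rationally connected. Then the very general fiber meets $g^{-1}(V)$, so very general points of $Y$ are chained to $g^{-1}(V)$.

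The main obstacle is upgrading from very general points to all points. For this I use that the set of points connected to $V$ by rational chains is a countable union of closed subsets and is closed under specialization. A limit of rational chains meeting $V$ is again a rational chain meeting $V$, by properness of the Chow variety or Hilbert scheme with bounded degree, after choosing a family over a curve. Since this set contains a very general point, some component in the countable union is all of $Y$. Finally, push the chains down via $g$.
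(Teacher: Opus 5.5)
Your proposal is correct and follows the paper's route: the MRC fibration plus Theorem~\ref{p-thm2.16} for the ``if'' directions, and pushing rational chains forward after Lemma~\ref{p-lem2.14} for the ``only if'' direction of (ii). Your countability and specialization arguments make explicit what the paper compresses into ``$Z$ is covered by images of rational curves, hence uniruled'' and ``this implies that $F$ is rationally chain connected modulo $V$'' (the latter only gives general points directly). Two small fixes: discard the interim claim that all rational curves on a non-uniruled $Z$ lie in one proper closed subset, which is false in general (a K3 surface can carry infinitely many rational curves) and is in any case superseded by your countable-union version; and handle $V=\emptyset$ in the ``$\Rightarrow$'' direction of (ii) via (i) and Theorem~\ref{p-thm2.11}, as the paper does, since then there is no chain ending in $g^{-1}(V)$.
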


\begin{proof}[Proof of Lemma~\ref{p-lem4.2}]
\setcounter{step}{0}
In Step~\ref{p-step4.2.1}, we prove~(i).
In Step~\ref{p-step4.2.2}, we prove~(ii).

\begin{step}\label{p-step4.2.1}
In this step, we prove~(i).

Assume first that $F$ is rationally connected, and let
$t\colon F \dashrightarrow Z$ be a nonconstant dominant rational map.
Then $Z$ is also rationally connected.
In particular, $Z$ is uniruled.
This proves the ``only if'' direction.

Conversely, assume that every nonconstant dominant rational map
$t\colon F \dashrightarrow Z$ has uniruled target.
In particular, since the identity map $F \dashrightarrow F$ is a
dominant rational map, it follows that $F$ itself is uniruled.

Let $F' \to F$ be a resolution of singularities from a smooth projective variety $F'$.
Since $F$ is uniruled, so is $F'$.
Let $F' \dashrightarrow Z$ be the maximal rationally connected fibration of $F'$.
By Theorem~\ref{p-thm2.16}, the base $Z$ is not uniruled.
Since $F' \to F$ is birational, it induces a dominant rational map
$t\colon F \dashrightarrow Z$.
Hence $Z$ must be a point.
Therefore, $F'$ is rationally connected, and consequently so is $F$.
This completes the proof of~(i).
\end{step}

\begin{step}\label{p-step4.2.2}
In this step, we prove~(ii).
By~(i) and Theorem~\ref{p-thm2.11}, we may assume throughout this step that
$V\neq\emptyset$. If $V=F$, then there is nothing to prove. 
Thus we may also assume that $V\ne F$.

Assume first that $F$ is rationally chain connected modulo $V$, and let
$t\colon F \dashrightarrow Z$ be a nonconstant dominant rational map. 
By Lemma~\ref{p-lem2.14}, after eliminating the indeterminacy of
$t\colon F\dashrightarrow Z$ by a finite sequence of blow-ups along smooth centers (see, for example, \cite[Corollary 3.18]{kollar-resolution}),
we may assume that $t\colon F\to Z$ is a morphism.
If $V$ does not dominate $Z$, then $Z$ is covered by images of rational curves.
In particular, $Z$ is uniruled.
This proves the ``only if'' direction.

Conversely, assume that for every nonconstant dominant rational map
$t\colon F \dashrightarrow Z$, its target $Z$ is either uniruled or dominated by $V$.
As in Step~\ref{p-step4.2.1}, by considering the identity map
$F \dashrightarrow F$, we see that $F$ is uniruled.

Let $F \dashrightarrow Z$ be the maximal rationally connected fibration of $F$.
By Theorem~\ref{p-thm2.16}, the base $Z$ is not uniruled.
Therefore, by assumption, $Z$ must be dominated by $V$.
This implies that $F$ is rationally chain connected modulo $V$.
This completes the proof of assertion~(ii).
\end{step}

This completes the proof of Lemma~\ref{p-lem4.2}.
\end{proof}

\begin{ex}\label{p-ex4.3}
Let $X$ be the cone over an elliptic curve $E$.
Then $X$ is rationally chain connected, and there exists a dominant rational map
$X\dashrightarrow E$.
In particular, $X$ is rationally chain connected modulo every Zariski closed subset of $X$.
On the other hand, $X$ is not rationally connected.
\end{ex}

\begin{rem}\label{p-rem4.4}
Lemma~4.2~(2) of \cite{hacon-mckernan} is stated for a normal variety $F$.
However, the statement is not correct as stated.
Indeed, the variety $X$ in Example~\ref{p-ex4.3} gives a counterexample to
\cite[Lemma~4.2~(2)]{hacon-mckernan}.
Therefore, in Lemma~\ref{p-lem4.2}~(ii), we assume that $F$ is smooth.
\end{rem}

\begin{cor}[{\cite[Corollary~4.3]{hacon-mckernan}}]\label{p-cor4.5}
Let $X$ be a normal projective variety and let $\Delta$ be an effective
$\mathbb Q$-divisor such that $K_X+\Delta$ is $\mathbb Q$-Cartier.
Let $h \colon X \to F$ be a dominant morphism of projective varieties 
such that $F$ is smooth.
Assume that for every dominant rational map
$t \colon F \dashrightarrow Z$ of projective varieties,
one of the following holds:
\begin{itemize}
\item the non-klt locus of $(X,\Delta)$ dominates $Z$, or
\item conditions {\em (2)--(4)} of Proposition~\ref{p-prop4.1} are satisfied.
\end{itemize}
Then $F$ is rationally chain connected modulo the image
$R \subset F$ of the non-klt locus of $(X,\Delta)$.
\end{cor}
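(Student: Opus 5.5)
The plan is to combine Lemma~\ref{p-lem4.2}~(ii) with Proposition~\ref{p-prop4.1}. Since $F$ is smooth, Lemma~\ref{p-lem4.2}~(ii) reduces the claim to the following statement. For every nonconstant dominant rational map $t\colon F\dashrightarrow Z$ of projective varieties, the target $Z$ is either uniruled or dominated by $R$. Here $R=h(\Nklt(X,\Delta))$ is a Zariski closed subset of $F$, because $h$ is proper. So we fix such a $t$ and split into the two cases allowed by the hypothesis.

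\emph{Case 1: the non-klt locus of $(X,\Delta)$ dominates $Z$ via $t\circ h$.} We will show that then $R$ dominates $Z$ in the sense of Definition~\ref{p-def2.3}. Choose an elimination of indeterminacy $F'\to F$ of $t$, and a birational model $Y\to X$ resolving $X\dashrightarrow Z$ that factors through $F'$, as in the diagram in the proof of Proposition~\ref{p-prop4.1}. Let $N$ be the preimage in $Y$ of $\Nklt(X,\Delta)$; by assumption $N$ surjects onto $Z$. The image of $N$ in $F'$ lies in the preimage of $R$, so the preimage of $R$ in $F'$ also surjects onto $Z$. Hence $R$ dominates $Z$. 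The independence of the choice of elimination noted in Definition~\ref{p-def2.3} makes this comparison legitimate.

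\emph{Case 2: conditions (2)--(4) of Proposition~\ref{p-prop4.1} hold for $t$, and the non-klt locus does not dominate $Z$.} Then condition (1) of Proposition~\ref{p-prop4.1} also holds, so all of its hypotheses are satisfied. It follows that $Z$ is a point or uniruled. Since $t$ is nonconstant, $Z$ is not a point, so $Z$ is uniruled.

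In either case the criterion of Lemma~\ref{p-lem4.2}~(ii) is met, so $F$ is rationally chain connected modulo $R$. The degenerate situations need a remark. If $R=\emptyset$, then Case 1 never occurs, and Lemma~\ref{p-lem4.2}~(ii) with $V=\emptyset$ means rational connectedness, which by Theorem~\ref{p-thm2.11} is equivalent to rational chain connectedness. If $R=F$, the statement is trivial. The main point requiring care is Case 1: one has to check that domination by $\Nklt(X,\Delta)$ upstairs, through the composite rational map, transfers to domination by its image $R$ in $F$. I would handle this exactly as above, by choosing compatible resolutions of $X\dashrightarrow Z$ and $F\dashrightarrow Z$.
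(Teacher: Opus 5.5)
Your proof is correct and follows the same route as the paper: reduce to the criterion of Lemma~\ref{p-lem4.2}~(ii), and when the non-klt locus does not dominate $Z$, use condition~(1) together with the assumed conditions (2)--(4) to apply Proposition~\ref{p-prop4.1}, which forces a nonconstant $Z$ to be uniruled. Your Case~1 spells out the transfer of domination from $\Nklt(X,\Delta)$ to its image $R$ through compatible eliminations of indeterminacy; the paper uses this step implicitly when it writes ``since $R$ does not dominate $Z$, condition~(1) of Proposition~\ref{p-prop4.1} is satisfied.''
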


\begin{proof}[Proof of Corollary~\ref{p-cor4.5}]
We apply the criterion given in Lemma~\ref{p-lem4.2}.
Let $t\colon F \dashrightarrow Z$ be an arbitrary dominant rational map of projective varieties.

If the image $R$ of the non-klt locus of $(X,\Delta)$ dominates $Z$,
there is nothing to prove.
Otherwise, $R$ does not dominate $Z$, and we claim that $Z$ is either a
point or uniruled.

Indeed, since $R$ does not dominate $Z$, condition~(1) of
Proposition~\ref{p-prop4.1} is satisfied.
By assumption, conditions~(2)--(4) of Proposition~\ref{p-prop4.1} also
hold.
Therefore, Proposition~\ref{p-prop4.1} implies that $Z$ is either
uniruled or a point.

By Lemma~\ref{p-lem4.2}, this shows that $F$ is rationally chain connected
modulo $R$.
This completes the proof of Corollary~\ref{p-cor4.5}.
\end{proof}

In this paper, we apply Corollary \ref{p-cor4.5} via the following lemma.

\begin{lem}\label{p-lem4.6}
Let $W$ be a normal projective variety, and let $\Delta_W$ be an effective
$\mathbb{Q}$-divisor on $W$ such that $K_W+\Delta_W$ is $\mathbb{Q}$-Cartier.
Let $\varphi\colon V \to W$ be a birational morphism from a smooth
projective variety $V$ such that both $\Exc(\varphi)$ and
$
\Exc(\varphi)\cup \Supp \varphi^{-1}_*\Delta_W
$
are simple normal crossing divisors on $V$.
Write
\[
K_V+\Delta_V=\varphi^*(K_W+\Delta_W)+E,
\]
where $\Delta_V$ and $E$ have no common irreducible components,
$\varphi_*\Delta_V=\Delta_W$, and $E$ is $\varphi$-exceptional.

Assume the following:
\begin{itemize}
\item[(a)] $K_W+\Delta_W \sim_{\mathbb{Q}} P$ for some effective
$\mathbb{Q}$-divisor $P$, equivalently, $\kappa(W,K_W+\Delta_W)\ge 0$. 
\item[(b)] $\kappa(W, K_W+\Delta_W^b)\le 0$, 
where $\Delta^b_W$ denotes the boundary part of $\Delta_W$.
\item[(c)] There exists an effective big $\mathbb{Q}$-Cartier
$\mathbb{Q}$-divisor $Q$ on $W$ such that $Q \le \Delta^b_W$.
\end{itemize}
Then $V$ is rationally chain connected modulo
$\Nklt(V,\Delta_V)$.
\end{lem}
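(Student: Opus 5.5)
The plan is to apply Corollary~\ref{p-cor4.5} with $X:=V$, $F:=V$ and $h:=\mathrm{id}_V$; this is allowed since $V$ is smooth. For the boundary on $V$ I use a modified divisor $\Gamma$ instead of $\Delta_V$. Let $E':=\Exc(\varphi)$ with its reduced structure, and set
\[
\Gamma:=\Delta_V^b+\varepsilon E'
\]
for a sufficiently small rational number $\varepsilon>0$. Since $V$ is smooth, $K_V+\Gamma$ is $\mathbb{Q}$-Cartier. Since $\Supp\Gamma\subset \Exc(\varphi)\cup\Supp\varphi^{-1}_*\Delta_W$ is simple normal crossing, $\Nklt(V,\Gamma)$ is the union of the components of $\Gamma$ with coefficient $\ge 1$. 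For small $\varepsilon$ these are exactly the components of $\Delta_V$ with coefficient $\ge 1$. Hence
\[
\Nklt(V,\Gamma)=\Nklt(V,\Delta_V)=\Supp\Delta_V^{nb}\cup(\text{coefficient-one part of }\Delta_V).
\]
Once Corollary~\ref{p-cor4.5} gives that $V$ is rationally chain connected modulo this set, the lemma follows.

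Fix a dominant rational map $t\colon V\dashrightarrow Z$ such that $\Nklt(V,\Gamma)$ does not dominate $Z$. I then check conditions (2)--(4) of Proposition~\ref{p-prop4.1} in turn.

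\emph{Condition (3).} We have $\varphi_*(K_V+\Gamma)=K_W+\Delta_W^b$, because $\varphi_*\Delta_V=\Delta_W$ and truncating coefficients commutes with $\varphi_*$ on non-exceptional components. Pushforward sends effective divisors $\mathbb{Q}$-linearly equivalent to $m(K_V+\Gamma)$ to effective divisors equivalent to $m(K_W+\Delta_W^b)$, injectively on sections. Therefore $\kappa(V,K_V+\Gamma)\le\kappa(W,K_W+\Delta_W^b)\le 0$ by (b).

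\emph{Condition (2).} Write
\[
K_V+\Gamma=(K_V+\Delta_V)+\varepsilon E'-\Delta_V^{nb},
\qquad
K_V+\Delta_V\sim_{\mathbb{Q}}\varphi^*P+E\ge 0
\]
by (a). Since $\Supp\Delta_V^{nb}\subset\Nklt(V,\Gamma)$ does not dominate $Z$, on a resolution of $t$ the divisor $\Delta_V^{nb}$ does not meet a general fiber. So the restriction of $K_V+\Gamma$ to a general fiber is $\mathbb{Q}$-linearly equivalent to an effective divisor, and its Kodaira dimension there is $\ge 0$.

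\emph{Condition (4).} Here I take $A:=\delta Q$ on $F=V$ pulled back, i.e.\ $\varphi^*(\delta Q)$, for a small rational $\delta>0$. This divisor is big, and its strict transform part satisfies
\[
\delta\,\varphi^{-1}_*Q\le\varphi^{-1}_*\Delta_W^b\le\Delta_V^b
\]
by (c) for $\delta\le 1$. Its $\varphi$-exceptional part has bounded coefficients times $\delta$. Choosing $\delta\ll\varepsilon$, this exceptional part is at most $\varepsilon E'$, so $\varphi^*(\delta Q)\le\Gamma$.

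Corollary~\ref{p-cor4.5} then yields that $V$ is rationally chain connected modulo $\Nklt(V,\Gamma)=\Nklt(V,\Delta_V)$. If this set is empty, the conclusion is plain rational chain connectedness, as in Definition~\ref{p-def2.13}(1).

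The main obstacle I expect is condition (4). Specifically, I need that $\varphi^*Q$ is dominated by a boundary which is still klt wherever $\Delta_V$ is, and which does not enlarge the non-klt locus. This is why both the small exceptional perturbation $\varepsilon E'$ and the rescaling $\delta Q$ are needed, with $\delta\ll\varepsilon$. A secondary subtlety is that the order of choices must be compatible: $\varepsilon$ is fixed first from the coefficients of $\Delta_V$, and $\delta$ is chosen afterwards. With that order, nothing depends on the map $t$.
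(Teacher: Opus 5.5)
Your proposal is correct and is essentially the paper's proof. The paper also applies Corollary~\ref{p-cor4.5} to $V$ with the identity map and verifies (2)--(4) by the same arguments; it uses the boundary $(\Delta_V+\varepsilon\Exc(\varphi))^b$ where you use $\Delta_V^b+\varepsilon E'$, which is harmless because both have non-klt locus $\Nklt(V,\Delta_V)$ and $K_V$ plus either one pushes forward to $K_W+\Delta_W^b$. Your pushforward argument for condition~(3) is, if anything, slightly cleaner than the paper's, since it never has to pull back $K_W+\Delta_W^b$, which need not be $\mathbb{Q}$-Cartier.
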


\begin{proof}[Proof of Lemma \ref{p-lem4.6}]We first recall the notation introduced in
Definition~\ref{p-def2.1}. For any effective
$\mathbb R$-divisor $D$, we write
$D^b$ for the boundary part of $D$ and
$D^{nb}:=D-D^b$. 

Set
\[
E^\dagger := \Exc(\varphi)
\quad \text{and} \quad
\Theta := \Delta_V + \varepsilon E^\dagger,
\]
where $\varepsilon > 0$ is a sufficiently small rational number.
Then
\[
\Nklt(V,\Theta^b)
= \Nklt(V,\Theta)
= \Nklt(V,\Delta_V).
\]

We apply Corollary \ref{p-cor4.5} to the pair $(V,\Theta^b)$
with respect to the identity morphism $V \to V$.
Let $t\colon V \dashrightarrow Z$ be a dominant rational map of projective varieties.
If $\Nklt(V,\Theta^b)$ dominates $Z$, there is nothing to prove.
Hence we may assume that $\Nklt(V,\Theta^b)$ does not dominate $Z$. 
It suffices to verify conditions (2)--(4) in
Proposition \ref{p-prop4.1}. 
Since $\Exc(\varphi) \subset \Supp \Theta^b$ by construction,
condition (4) follows from condition (c). Next, we compute
\begin{equation}\label{p-eq4.1}
\begin{split}
K_V+\Theta^b
&= \varphi^*(K_W+\Delta_W)
   - \Theta^{nb}
   + E
   + \varepsilon E^\dagger \\
&= \varphi^*(K_W+\Delta_W^b)
   + \varphi^*\Delta_W^{nb}
   - \Theta^{nb}
   + E
   + \varepsilon E^\dagger.
\end{split}
\end{equation}
Note that
\[
\varphi^*\Delta_W^{nb}
- \Theta^{nb}
+ E
+ \varepsilon E^\dagger
\]
is $\varphi$-exceptional.
Therefore, by (b) and \eqref{p-eq4.1},
\[
\kappa(V,K_V+\Theta^b)
\le
\kappa(W,K_W+\Delta_W^b)
\le 0.
\]
Thus condition (3) holds. Finally, by \eqref{p-eq4.1}, condition (a),
the effectivity of $E+\varepsilon E^\dagger$,
and the inclusion
\[
\Supp \Theta^{nb}
\subset \Nklt(V,\Theta^b),
\]
we see that condition (2) is satisfied.

Hence Corollary \ref{p-cor4.5} applies, and we conclude that
$V$ is rationally chain connected modulo
\[
\Nklt(V,\Theta^b)
=
\Nklt(V,\Delta_V).
\]
This completes the proof.
\end{proof}

As a straightforward application of Lemma \ref{p-lem4.6}, 
we establish the rational connectedness of divisorial log terminal Fano pairs.

\begin{cor}\label{p-cor4.7}
Let $(X,\Delta)$ be a projective divisorial log terminal pair 
such that $-(K_X+\Delta)$ is ample. 
Then $X$ is rationally connected. 
\end{cor}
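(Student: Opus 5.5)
We plan to apply Lemma~\ref{p-lem4.6} with $W=X$ and a suitably modified boundary $\Delta_W$ chosen so that $K_W+\Delta_W\sim_{\mathbb Q}0$ and $(W,\Delta_W)$ is kawamata log terminal. Lemma~\ref{p-lem4.6} will then give rational chain connectedness of a smooth model modulo the empty set, and Theorem~\ref{p-thm2.11} will upgrade this to rational connectedness.

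\emph{Step 1: reduction to a klt pair with $\mathbb Q$-coefficients.} Since $(X,\Delta)$ is divisorial log terminal, $(X,(1-\delta)\Delta)$ is kawamata log terminal for every $0<\delta\ll 1$. Because ampleness is an open condition, $-(K_X+(1-\delta)\Delta)$ remains ample for $\delta$ small. Next we replace $(1-\delta)\Delta$ by a nearby effective $\mathbb Q$-divisor $\Delta'$ with the same support such that $K_X+\Delta'$ is $\mathbb Q$-Cartier. This uses the standard rational polytope argument: the set of $\mathbb R$-divisors $B$ supported on $\Supp\Delta$ with $K_X+B$ $\mathbb R$-Cartier is a rational affine subspace containing $(1-\delta)\Delta$, so rational points in it approximate $(1-\delta)\Delta$. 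For $\Delta'$ close enough, the pair $(X,\Delta')$ is still kawamata log terminal and $-(K_X+\Delta')$ is still ample, because both are open conditions. Making this perturbation carefully is the only slightly technical point, and I expect it to be the main obstacle. Everything else is bookkeeping.

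\emph{Step 2: choice of $\Delta_W$ and verification of (a)--(c).} Choose $m$ so that $-m(K_X+\Delta')$ is very ample, take a general member $G\in|-m(K_X+\Delta')|$, and set $D:=\frac1m G$ and $\Delta_W:=\Delta'+D$. By Bertini, $D$ is reduced with coefficient $1/m<1$, shares no component with $\Delta'$, and $(W,\Delta_W)$ is still kawamata log terminal. Every coefficient of $\Delta_W$ is less than $1$, so $\Delta_W^b=\Delta_W$. We also have $K_W+\Delta_W\sim_{\mathbb Q}0$. Hence (a) holds with $P=0$, and (b) holds because $\kappa(W,K_W+\Delta_W^b)=0$. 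For (c) we take $Q:=D$, which is ample and therefore big, and which satisfies $D\le\Delta_W^b$.

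\emph{Step 3: conclusion.} Take a log resolution $\varphi\colon V\to W$ as required in Lemma~\ref{p-lem4.6}, and write $K_V+\Delta_V=\varphi^*(K_W+\Delta_W)+E$. Since $(W,\Delta_W)$ is kawamata log terminal, all coefficients of $\Delta_V$ are less than $1$: the strict transform coefficients are those of $\Delta_W$, and the exceptional coefficients are the negatives of discrepancies, which are greater than $-1$. Thus $\Nklt(V,\Delta_V)=\emptyset$. Lemma~\ref{p-lem4.6} then says that $V$ is rationally chain connected modulo $\emptyset$, that is, rationally chain connected in the sense of Definition~\ref{p-def2.13}~(1). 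Since $V$ is smooth projective, Theorem~\ref{p-thm2.11} shows that $V$ is rationally connected. Finally, $X$ is the image of $V$ under the birational morphism $\varphi$, so $X$ is rationally connected.
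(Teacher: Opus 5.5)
Your Steps 2 and 3 are exactly the paper's argument: add a general $D\sim_{\mathbb Q}-(K_X+\Delta)$ keeping the pair kawamata log terminal, apply Lemma~\ref{p-lem4.6}, then Theorem~\ref{p-thm2.11}. Your verification of (a)--(c) and of $\Nklt(V,\Delta_V)=\emptyset$ is correct.

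There is, however, a genuine gap in Step 1, which the paper compresses into ``by slightly perturbing $\Delta$''. A divisorial log terminal pair need not be $\mathbb Q$-factorial, so $K_X+(1-\delta)\Delta$ need not be $\mathbb R$-Cartier. In that case the assertion that $(X,(1-\delta)\Delta)$ is kawamata log terminal has no meaning. Likewise, the rational affine space of divisors $B$ supported on $\Supp\Delta$ with $K_X+B$ $\mathbb R$-Cartier need not contain $(1-\delta)\Delta$.

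For example, let $X$ be the projective cone over $(\mathbb P^1\times\mathbb P^1,\mathcal O(1,2))$, and let $\Delta$ be the cone over a line $D\in|\mathcal O(1,0)|$. Blowing up the vertex gives a log resolution whose exceptional divisor has discrepancy $0$, so $(X,\Delta)$ is plt, in particular divisorial log terminal. Moreover $K_X+\Delta\sim-2H_\infty$, where $H_\infty$ is the hyperplane section at infinity, so $-(K_X+\Delta)$ is ample. But near the vertex, $K_X+b\Delta$ is $\mathbb R$-Cartier only if the class $(b-2,-2)$ of $K_{\mathbb P^1\times\mathbb P^1}+bD$ is proportional to $(1,2)$, that is, only for $b=1$. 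So no kawamata log terminal boundary supported on $\Supp\Delta$ exists, and Step 1 as you describe it cannot be carried out. It is fine when $X$ is $\mathbb Q$-factorial.

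The missing idea is to allow a small ample summand, which costs nothing here because $-(K_X+\Delta)$ is ample. This is the standard fact that a quasi-projective dlt pair becomes klt after adding a small ample divisor (cf.\ Koll\'ar--Mori, Proposition~2.43).

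Concretely, take a log resolution $f\colon Y\to X$, obtained by blowing up over the non-snc locus, with two properties: every $f$-exceptional divisor has discrepancy $>-1$, and $-F$ is $f$-ample for some effective $f$-exceptional $F$. Write $K_Y+\Delta_Y=f^*(K_X+\Delta)$. For an ample Cartier divisor $H$ on $X$ and rational $0<\delta\ll\eta\ll1$, the divisor $f^*H-\eta F+\delta f^{-1}_*\lfloor\Delta\rfloor$ is ample. Let $G$ be a general effective $\mathbb Q$-divisor $\mathbb Q$-linearly equivalent to it.

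For $0<\varepsilon\ll1$, the divisor $\Delta_Y-\varepsilon\delta f^{-1}_*\lfloor\Delta\rfloor+\varepsilon\eta F+\varepsilon G$ has snc support and all coefficients $<1$. Adding it to $K_Y$ gives a divisor $\mathbb R$-linearly equivalent to $f^*(K_X+\Delta+\varepsilon H)$. Its pushforward is $\Delta':=\Delta-\varepsilon\delta\lfloor\Delta\rfloor+\varepsilon f_*G$, and it satisfies:
\begin{itemize}
\item $K_X+\Delta'\sim_{\mathbb R}K_X+\Delta+\varepsilon H$ is $\mathbb R$-Cartier;
\item $(X,\Delta')$ is kawamata log terminal;
\item $-(K_X+\Delta')$ is still ample.
\end{itemize}
Only at this point does your rational approximation apply, with $\Supp\Delta'$ in place of $\Supp\Delta$. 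Steps 2 and 3 then go through unchanged.
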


\begin{proof}[Proof of Corollary \ref{p-cor4.7}]
By slightly perturbing $\Delta$, we may assume that 
$(X,\Delta)$ is kawamata log terminal and 
$-(K_X+\Delta)$ is an ample $\mathbb Q$-divisor. 
Since $-(K_X+\Delta)$ is ample, we take an effective 
$\mathbb{Q}$-divisor $D$ such that 
\[
D \sim_{\mathbb{Q}} -(K_X+\Delta)
\]
and $(X,\Delta+D)$ is kawamata log terminal. 
Then
\[
K_X+\Delta+D \sim_{\mathbb{Q}} 0.
\]
By Lemma \ref{p-lem4.6}, 
there exists a resolution $\varphi\colon V \to X$ 
such that $V$ is rationally chain connected. 
Since $V$ is smooth, it follows that $V$ is rationally connected 
(see Theorem \ref{p-thm2.11}). 
Hence $X$ is rationally connected.
\end{proof}

We also need the following variant of Lemma~\ref{p-lem4.6} 
for the proof of Hacon--M\textsuperscript{c}Kernan's rational 
chain connectedness theorem (see Theorem~\ref{p-thm5.1}).

\begin{lem}\label{p-lem4.8} 
Let $X$ be a normal projective variety and let $\Delta$ be an 
effective $\mathbb Q$-divisor on $X$ such that $K_X+\Delta$ is 
$\mathbb Q$-Cartier.
Let $\varphi\colon X\to W$ be a projective surjective morphism
onto a normal projective variety $W$ with connected fibers
such that $-(K_X+\Delta)$ is $\varphi$-ample and
$\varphi(\Nklt(X, \Delta))=W$.
Let $\psi\colon V\to X$ be a birational morphism
from a smooth projective variety $V$ such that
both $\Exc(\psi)$ and
$
\Exc(\psi)\cup \Supp \psi^{-1}_*\Delta
$
are simple normal crossing divisors on $V$.
Write
\[
K_V+\Delta_V=\psi^*(K_X+\Delta)+E,
\]
where $\Delta_V$ and $E$ have no common irreducible components,
$\psi_*\Delta_V=\Delta$,
and $E$ is $\psi$-exceptional.
Then $V$ is rationally chain connected modulo
$\Nklt(V, \Delta_V)$.
\end{lem}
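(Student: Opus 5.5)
The plan is to verify the criterion of Lemma~\ref{p-lem4.2}~(ii) on the smooth projective variety $V$ with $N:=\Nklt(V,\Delta_V)$. The input will be rational chain connectedness of the \emph{general fibers} of $\varphi\circ\psi\colon V\to W$ modulo $N$, which I expect to obtain from Lemma~\ref{p-lem4.6} applied fiberwise. If $N=V$ there is nothing to prove. Otherwise note that $\psi(N)=\Nklt(X,\Delta)$, so $N$ surjects onto $W$ because $\varphi(\Nklt(X,\Delta))=W$. If $\dim W=\dim X$ then $\varphi$ is birational with connected fibers, hence (as $W$ is normal) an isomorphism, and the hypothesis would force $\Nklt(X,\Delta)=X$, contradicting $N\neq V$. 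So I assume the general fiber $X_w$ of $\varphi$ is positive dimensional.

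\emph{Step 1 (fiberwise input).} Fix an ample $\mathbb Q$-divisor $A$ on $W$ such that $-(K_X+\Delta)+\varphi^*A$ is ample on $X$. Choose a general effective $\mathbb Q$-divisor $D\sim_{\mathbb Q}-(K_X+\Delta)+\varphi^*A$ with small coefficients. Then $D$ adds no new non-klt centers and is in general position with respect to $\psi$. For a general $w\in W$, the fiber $X_w$ is a normal projective variety and $V_w\to X_w$ is a log resolution with the same simple normal crossing properties. By adjunction for general fibers, $K_{X_w}+(\Delta+D)|_{X_w}\sim_{\mathbb Q}0$. Apply Lemma~\ref{p-lem4.6} with $W:=X_w$ and $\Delta_W:=(\Delta+D)|_{X_w}$, checking its hypotheses as follows.
\begin{itemize}
\item[(a)] This holds because $K_W+\Delta_W\sim_{\mathbb Q}0$.
\item[(b)] This holds since $\Delta_W^b\le \Delta_W$, which gives $\kappa(K_W+\Delta_W^b)\le \kappa(K_W+\Delta_W)=0$.
\item[(c)] Take $Q:=D|_{X_w}$. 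It is ample, and it lies in the boundary part because $D$ is general with small coefficients.
\end{itemize}
Since $D$ is general, $\Nklt(V_w,\Delta_{V_w}+\psi^*D|_{V_w})$ coincides with $N\cap V_w$, which is nonempty. Hence $V_w$ is rationally chain connected modulo $N\cap V_w$.

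\emph{Step 2 (checking Lemma~\ref{p-lem4.2}~(ii)).} Let $t\colon V\dashrightarrow Z$ be a nonconstant dominant rational map such that $N$ does not dominate $Z$; I must show that $Z$ is uniruled. Using Lemma~\ref{p-lem2.14}, resolve $t$ by blow-ups along smooth centers, so that we may regard $t$ as a morphism. By Step~1, a general point $v\in V$ is joined by a chain of rational curves inside $V_{w}$ to a point of $N$. If every component of every such chain were contracted by $t$, then $t(v)\in t(N)$ for general $v$, so $t(N)=Z$. That contradicts the assumption that $N$ does not dominate $Z$. Hence through $t(v)$, for general $v$, there passes a nonconstant image of a rational curve, and $Z$ is uniruled. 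Lemma~\ref{p-lem4.2}~(ii) then gives the statement.

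\emph{Main obstacle.} I expect the hardest step to be the general-fiber bookkeeping in Step~1. One must ensure that $X_w$ is normal, that the restricted pair and the log resolution $V_w\to X_w$ still satisfy the hypotheses of Lemma~\ref{p-lem4.6}, and above all that the non-klt locus restricts correctly, namely $\Nklt(V_w,\cdot)=N\cap V_w$. That last point depends on choosing $D$ generally, via Bertini, and on $w$ avoiding finitely many bad loci. A further detail in Step~2 is that the chain from $v$ must be adapted to the blow-ups used to resolve $t$; Lemma~\ref{p-lem2.14} handles this.
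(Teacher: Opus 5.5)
Your proposal is correct and is essentially the paper's proof. Step~1 is exactly the paper's argument: add a general $D\sim_{\mathbb Q}-(K_X+\Delta)+\varphi^*A$ with small coefficients, apply Lemma~\ref{p-lem4.6} to $V_w\to X_w$ for general $w$, and use $\psi(N)=\Nklt(X,\Delta)$ to get $N\cap V_w\neq\emptyset$. Your Step~2 only makes explicit, through Lemma~\ref{p-lem4.2}~(ii) and the lifting argument of Lemma~\ref{p-lem2.14}, the passage from general fibers to all of $V$ that the paper states in one line. To make Step~2 airtight, take $v$ with $t(v)\notin t(N)$; then the connected image of the chain is a union of rational curves passing through $t(v)$.

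One slip to fix: a birational morphism with connected fibers onto a normal variety need not be an isomorphism (a blow-up is a counterexample). The case $\dim W=\dim X$ is excluded anyway, because $\Nklt(X,\Delta)$ is a proper closed subset of $X$ and $\varphi(\Nklt(X,\Delta))=W$ forces $\dim W<\dim X$.
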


\begin{proof}[Proof of Lemma~\ref{p-lem4.8}]
Let $H$ be a sufficiently ample Cartier divisor on $W$
such that $-(K_X+\Delta)+\varphi^*H$ is ample.
Choose a general effective $\mathbb Q$-divisor $A$ on $X$
with sufficiently small coefficients such that
\[
A\sim_{\mathbb Q} -(K_X+\Delta)+\varphi^*H.
\]
Set $\Delta_X:=\Delta+A$.
Then
\[
K_X+\Delta_X \sim_{\mathbb Q} \varphi^*H,
\]
and hence $\kappa(X,K_X+\Delta_X)\ge 0$. 
Moreover, since the coefficients of $A$ are sufficiently small, $A\leq \Delta^b_X$. 

Pulling back to $V$, we obtain
\[
K_V+\Delta_V+\psi^{-1}_*A
=
\psi^*(K_X+\Delta_X)+E.
\] 
Observe that $\psi^*A=\psi^{-1}_*A$, since $A$ is general.
Since $A$ is general, we may further assume that
\[
\Exc(\psi)\cup
\Supp \psi^{-1}_*A\cup
\Supp \psi^{-1}_*\Delta
\]
is a simple normal crossing divisor on $V$.

We now apply Lemma~\ref{p-lem4.6} to the general fiber of
$\varphi\colon X\to W$ with respect to the pair $(X,\Delta_X)$. 
Let $X_w$ be a general fiber of $\varphi\colon X\to W$.
Then
\[
K_{X_w}+\Delta_{X_w}
:=
(K_X+\Delta_X)|_{X_w}
\sim_{\mathbb Q}0. 
\]
Moreover, by construction, $\Delta^b_{X_w}\ge A|_{X_w}$. 
Therefore, $(X_w,\Delta_{X_w})$ satisfies conditions~(a)--(c) of
Lemma~\ref{p-lem4.6}.
Applying Lemma~\ref{p-lem4.6} to
\[
V_w:=(\varphi\circ\psi)^{-1}(w)\longrightarrow X_w,
\]
we obtain that $V_w$ is rationally chain connected modulo
$\Nklt\bigl(V_w,\Delta_{V_w}+\psi^{-1}_*A|_{V_w}\bigr)$, 
where
$
K_{V_w}+\Delta_{V_w}
:=
(K_V+\Delta_V)|_{V_w}$. 
Since
\[
\Nklt\bigl(V_w,\Delta_{V_w}+\psi^{-1}_*A|_{V_w}\bigr)
=
\Nklt\bigl(V,\Delta_V+\psi^{-1}_*A\bigr)|_{V_w}
=
\Nklt\bigl(V,\Delta_V\bigr)|_{V_w},
\]
it follows that $V$ is rationally chain connected modulo
$\Nklt(V,\Delta_V)$.
This completes the proof.
\end{proof}

\section{The Hacon--M\textsuperscript{c}Kernan Rational Chain Connectedness Theorem}\label{p-sec5}

In this section, we establish the Hacon--M\textsuperscript{c}Kernan rational chain connectedness theorem in the complex analytic setting:~Theorem \ref{p-thm5.1}.

\begin{thm}[{\cite[Theorem 5.1]{hacon-mckernan}}]\label{p-thm5.1}
Let $f\colon X \to S$ be a projective morphism between normal complex
varieties such that $f_*\mathcal{O}_X \simeq \mathcal{O}_S$.
Let $\Delta$ be an effective $\mathbb{Q}$-divisor on $X$ such that
$K_X + \Delta$ is $\mathbb{Q}$-Cartier, $\Delta$ is $f$-big, and
$K_X + \Delta \sim_{\mathbb{Q}, f} 0$.
Fix a point $s \in S$. 
After possibly shrinking $S$ around $s$, we may construct
a resolution $g\colon Y \to X$ and effective $\mathbb{Q}$-divisors
$\Gamma$, $E$, and $G$ satisfying the following properties. 
\begin{itemize}

\item[(1)]
Let $g\colon Y \to X$ be a resolution of singularities such that
$\pi := f \circ g\colon Y \to S$ is projective,
$\pi^{-1}(s)$ and $\Exc(g)$ are simple normal crossing divisors on $Y$, and
\[
\Exc(g) \cup \Supp g^{-1}_*\Delta \cup \pi^{-1}(s)
\]
is a simple normal crossing divisor on $Y$.
Given any bimeromorphic morphism $X' \to X$, we may assume further that
$g\colon Y \to X$ factors through $X' \to X$.

\item[(2)]
We have
\[
K_Y + \Gamma \sim_{\mathbb{Q}, \pi} E,
\]
where $\Gamma$ and $E$ have no common irreducible components,
$\Supp(\Gamma + E)$ is a simple normal crossing divisor on $Y$,
$E$ is $g$-exceptional, and $\Gamma$ can be written as $\Gamma = A + B$
with $A$ an effective general $\pi$-ample $\mathbb{Q}$-divisor
with small coefficients and $B$ an effective $\mathbb{Q}$-divisor.

\item[(3)]
We have $\Nklt(Y, \Gamma) \subset g^{-1}\Nklt(X, \Delta)$.
In particular, if $(X, \Delta)$ is kawamata log terminal, then
$(Y, \Gamma)$ is kawamata log terminal.

\item[(4)]
There exists an effective $\mathbb{Q}$-Cartier $\mathbb{Q}$-divisor $G$ on $S$ such that
\[
\Supp(\Gamma+E)\cup\Supp(\pi^*G)\cup\pi^{-1}(s)
\]
is a simple normal crossing divisor on $Y$.
For $t \in [0,1]$, set $\Delta_t := \Delta + t f^*G$. 
Let $\Gamma_t$ and $E_t$ be the effective $\mathbb{Q}$-divisors obtained
from $\Gamma + t\pi^*G$ and $E$, respectively, by subtracting their common
irreducible components, so that
\[
K_Y + \Gamma_t \sim_{\mathbb{Q}, \pi} E_t.
\]
Let $V_t$ be the closure of
\[
\Nklt(Y, \Gamma_t) \setminus \Nklt(Y, \Gamma).
\]
Let $F$ be the union of the irreducible components of $\pi^{-1}(s)$
whose discrepancies with respect to $K_X+\Delta$ are greater than $-1$.
Then $F = V_1$ holds.

\item[(5)]
Moreover, there exist rational numbers
\[
0 = t_0 < t_1 < \cdots < t_k \leq 1
\]
such that
\[
V_{t_i} = F_1 \cup \cdots \cup F_i,
\]
where $F_1, \dots, F_k$ are the irreducible components of $F$.
For any $0 < \varepsilon \ll 1$, we have
\[
V_{t_i - \varepsilon} = V_{t_{i-1}}.
\]

\item[(6)]
Each $F_i$ is rationally chain connected modulo
\[
W_i := F_i \cap \Nklt(Y, \Gamma_{t_{i-1}}).
\]
Therefore, $F$ is rationally chain connected modulo $\Nklt(Y,\Gamma)$,
and hence modulo $g^{-1}\Nklt(X,\Delta)$.

\item[(7)]
In particular, if $(Y, \Gamma)$ is kawamata log terminal, then $F_1$ is rationally connected.

\end{itemize}
\end{thm}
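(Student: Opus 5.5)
We construct the data step by step and prove the rational chain connectedness in (6) by running a relative MMP and invoking Lemma~\ref{p-lem4.6} or Lemma~\ref{p-lem4.8}.

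\emph{Construction, items (1)--(3).} Shrink $S$ to a Stein neighborhood of $s$ that is relatively compact. Since $\Delta$ is $f$-big, write $\Delta\sim_{\mathbb Q,f}A_X+B_X$ with $A_X$ $f$-ample and $B_X\ge 0$. Take a projective resolution $g\colon Y\to X$ factoring through the given $X'$ such that $\Exc(g)\cup\Supp g^{-1}_*\Delta\cup\Supp g^*B_X\cup\pi^{-1}(s)$ is simple normal crossing. Write $K_Y+\Delta_Y=g^*(K_X+\Delta)+E_0$. Replace a small multiple $\delta\Delta$ inside $\Delta$ by $\delta(A_X+B_X)$. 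Then subtract a small multiple of a $g$-exceptional divisor $-\sum a_iE_i$ that is $g$-ample, so that $g^*A_X-\sum a_iE_i$ is $\pi$-ample, and choose $A$ general in its $\mathbb Q$-linear class. This gives $\Gamma=A+B$ and $E$ as in (2) with no common components and $K_Y+\Gamma\sim_{\mathbb Q,\pi}E$. All perturbations are small and the klt locus is preserved, which yields (3).

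\emph{Items (4)--(5).} Take $G$ to be a general member of $\mathfrak m_s^N$ for a suitable $N$, so that $\pi^*G$ contains every component of $\pi^{-1}(s)$ with positive multiplicity. After a further blow-up over $s$, arrange the simple normal crossing condition. For a component $F_j$ of $\pi^{-1}(s)$ with coefficient $\gamma_j<1$ in $\Gamma$ (discrepancy $>-1$), the lc threshold $c_j=(1-\gamma_j)/\mathrm{mult}_{F_j}\pi^*G$ is rational. The components with discrepancy $\le -1$ already lie in $\Nklt(Y,\Gamma)$. Taking $N$ large and $G$ general gives $c_j\le1$ and $V_1=F$. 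Perturb $\Gamma$, by slightly changing $A$ or the coefficients of $B$ as in the tie-breaking trick, so that the $c_j$ are distinct. Ordering them gives $t_1<\dots<t_k$ and (5). Along the way we keep $\Gamma_t$ simple normal crossing and use that non-klt centers of simple normal crossing pairs are strata.

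\emph{Item (6), the core.} Fix $i$ and consider $(Y,\Gamma_{t_i})$, where $F_i$ now has coefficient exactly $1$. Run a $(K_Y+\Gamma_{t_i}-\eta F_i)$-MMP, or a dlt-type MMP, over $S$ in the analytic setting of \cite{fujino-minimal}. Since $K_Y+\Gamma_{t_i}\sim_{\mathbb Q,\pi}E_{t_i}+t_i\pi^*G$, this amounts to an MMP on $E_{t_i}$ plus a vertical part. Because $A$ is $\pi$-ample, the pair is of log general type type relative to $S$, so the MMP terminates. The outcome is one of two cases:
\begin{itemize}
\item[(A)] the strict transform of $F_i$ survives on a model $Y'$ with $K_{Y'}+\Gamma'_{t_i}$ nef and $\mathbb Q$-trivial on the fiber, restricted to the normalization $W$ of the image of $F_i$;
\item[(B)] $F_i$ is contracted, or lies in a Mori fiber space.
\end{itemize}

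In case (A), adjunction $(K_{Y'}+\Gamma')|_{W}=K_W+\Delta_W$ gives $\kappa\ge0$, which is (a). The exceptional divisors of $F_i\to W$ are contained in $\Nklt$ because of the previous components $F_1,\dots,F_{i-1}$. Fiber-$\mathbb Q$-triviality combined with $E$ exceptional gives (b). The restriction of the ample $A$ gives (c). Lemma~\ref{p-lem4.6} applied to $F_i\to W$ then yields rational chain connectedness modulo $\Nklt$. Here $\Nklt(F_i,\Delta_{F_i})$ is contained in $F_i\cap\Nklt(Y,\Gamma_{t_{i-1}})=W_i$, using (5) and inversion of adjunction on simple normal crossing strata.

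In case (B), there is a contraction $\varphi$ with $-(K+\Gamma')$ relatively ample whose non-klt locus (the image of $F_i$ together with earlier strata) surjects. Lemma~\ref{p-lem4.8} applies there. We reduce to the projective setting because $\pi^{-1}(s)$ and $F_i$ are projective varieties.

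Chaining over $i$ then shows that $F$ is rationally chain connected modulo $\Nklt(Y,\Gamma)$, which is contained in $g^{-1}\Nklt(X,\Delta)$, giving (6).

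\emph{Item (7).} If $(Y,\Gamma)$ is klt, then $W_1=\emptyset$, so $F_1$ is rationally chain connected. Being smooth, $F_1$ is rationally connected by Theorem~\ref{p-thm2.11}.

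The main obstacle is step (6): setting up the MMP so that the restriction to $F_i$ lands exactly in the hypotheses of Lemma~\ref{p-lem4.6} or Lemma~\ref{p-lem4.8}. In particular, we must check that condition (b) holds and that the non-klt locus after adjunction is exactly controlled by $W_i$. For this we would first try an MMP with scaling of $A$ over $S$ and track $F_i$ through each step.
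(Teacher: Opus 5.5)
Your outline has the right skeleton: run an MMP, then apply Lemma~\ref{p-lem4.6} when the strict transform of $F_i$ survives to a nef model, and Lemma~\ref{p-lem4.8} when it is contracted. However, step (6) has a genuine gap, and it is exactly the part you flagged as the main obstacle.

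\textbf{The MMP you propose is not available.} The pair $(Y,\Gamma_{t_i}-\eta F_i)$ is not log canonical. The coefficient of each $F_j$ in $\Gamma_t$ grows linearly in $t$ and equals $1$ at $t_j$. Hence every $F_j$ with $j<i$ has coefficient $>1$ in $\Gamma_{t_i}$, and $\Gamma$ itself may already have coefficients $>1$ along $\Nklt(Y,\Gamma)$. No MMP exists for such a pair, and ``a dlt-type MMP'' is exactly what has to be made precise.

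The paper does this by truncating. It runs a $(K_Y+\Gamma^b_{t_i})$-MMP, where $(Y,\Gamma^b_{t_i})$ is snc with coefficients $\le 1$, hence dlt, and $A\le\Gamma^b_{t_i}$. It runs this MMP over $X$ rather than over $S$, so no Mori fibre space can occur; your Mori fibre alternative in (B) is left completely unanalysed. Both cases are then driven by the relation $K_Y+\Gamma^b_{t_i}\sim_{\mathbb Q,\pi}E_{t_i}-\Gamma^{nb}_{t_i}$, with $E_{t_i}$ being $g$-exceptional.
\begin{itemize}
\item In Case (A), the negativity lemma over $X$ gives $(E_{t_i})_m=0$. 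Hence $K_{(F_i)_m}+\Phi\sim_{\mathbb Q}0$ and $\kappa(K_{(F_i)_m}+\Phi^b)\le 0$, which are conditions (a) and (b).
\item In Case (B), $-(K_{(F_i)_m}+\Phi^b)\sim_{\mathbb Q}\Phi^{nb}-(E_{t_i})_m|_{(F_i)_m}$ is ample on the positive-dimensional fibres over $W$, while $(E_{t_i})_m|_{(F_i)_m}$ is effective. This forces $\Supp\Phi^{nb}$ to dominate $W$, which is precisely the hypothesis $\varphi(\Nklt)=W$ of Lemma~\ref{p-lem4.8}.
\end{itemize}
Your justification for (B), that ``the image of $F_i$ together with earlier strata surjects'', does not supply this. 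The image of $F_i$ is not part of the non-klt locus of the adjoint pair on $(F_i)_m$.

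\textbf{You never transport the conclusion back from the MMP model to $F_i$.} Lemmas~\ref{p-lem4.6} and~\ref{p-lem4.8} only show that a resolution $V$ of $(F_i)_m$ is rationally chain connected modulo $\Nklt(V,\Delta_V)$. The map $F_i\dashrightarrow(F_i)_m$ is merely birational, since flips can modify $F_i$. So there is no morphism ``$F_i\to W$'', and your claim that its exceptional divisors lie in the non-klt locus ``because of the previous components'' is unfounded.

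What is needed is $p(\Nklt(V,\Delta_V))\subset\Nklt(F_i,\Psi)$ for a common resolution $p\colon V\to F_i$. This is a discrepancy comparison:
\begin{itemize}
\item In Case (A), it follows from the crepant identity $a(\nu,F_i,\Psi-E_{t_i}|_{F_i})=a(\nu,(F_i)_m,\Phi)$ together with $E_{t_i}|_{F_i}\ge 0$.
\item In Case (B), it follows from $a(\nu,(F_i)_m,\Phi^b)\ge a(\nu,F_i,\Psi^b)$, which comes from the $(K_Y+\Gamma^b_{t_i})$-negativity of the MMP, together with $\Nklt(F_i,\Psi^b)=\Nklt(F_i,\Psi)$.
\end{itemize}
Inversion of adjunction on snc strata only gives $\Nklt(F_i,\Psi)=W_i$ on $Y$ itself, which is the easy step.

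\textbf{A smaller point in (4).} $G$ must have coefficients $<1$, for example $\frac12\sum_j H_j$ with many general $H_j$ through $s$. If $G$ is a reduced general member of $\mathfrak m_s^N$, the strict transform part of $\pi^*G$ has coefficient $1$. It then lies in $\Nklt(Y,\Gamma_1)$ but not in $\pi^{-1}(s)$, so $V_1\neq F$.
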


We give a detailed proof of Theorem \ref{p-thm5.1}. 

\begin{proof}[Proof of Theorem \ref{p-thm5.1}]
Throughout this proof, we freely shrink $S$ around the point $s$ without further mention. 
Let $g\colon Y \to X$ be a resolution of singularities such that
$\pi := f \circ g\colon Y \to S$ is projective and both
$\pi^{-1}(s)$ and $\Exc(g)$ are simple normal crossing divisors on $Y$ 
(see \cite[Section 13]{bierstone-milman}). Let $\mathfrak m_s$ denote the maximal ideal corresponding to $s \in S$, 
and let $\mu\colon S' \to S$ be the blow-up of $S$ along $\mathfrak m_s$. 
We may further assume that $\pi\colon Y \to S$ factors through $\mu$ and that
\[
\Exc(g) \cup \Supp g^{-1}_*\Delta \cup \pi^{-1}(s)
\]
is a simple normal crossing divisor on $Y$. Finally, let $X' \to X$ be a bimeromorphic morphism. 
Then, by \cite[Corollary 2]{hironaka} and \cite[Section 13]{bierstone-milman}, we can arrange 
for $g\colon Y \to X$ to factor through $X' \to X$.

Let $F$ be the union of the irreducible components of $\pi^{-1}(s)$
whose discrepancies with respect to $K_X+\Delta$ are greater than $-1$.
Write
\[
K_Y + \Gamma' = g^*(K_X + \Delta) + E',
\]
where $\Gamma'$ and $E'$ are effective $\mathbb{Q}$-divisors with no
common irreducible components, $g_*\Gamma' = \Delta$, and $E'$ is
$g$-exceptional. Let $E^\dagger := \Exc(g)$ denote the reduced exceptional divisor.
For a sufficiently small rational number $\delta > 0$, set
\[
\Gamma'' := \Gamma' + \delta E^\dagger, \qquad
E'' := E' + \delta E^\dagger.
\]
Then $\Nklt(Y,\Gamma') = \Nklt(Y,\Gamma'')$. Since
\[
\Supp \Gamma'' = \Supp g^{-1}_* \Delta \cup \Exc(g),
\]
and $\Delta$ is $f$-big by assumption, 
we may write
\[
\Gamma'' \sim_{\mathbb{Q},\pi} A' + B',
\]
where $A'$ is a $\pi$-ample $\mathbb{Q}$-divisor, 
$B'$ is an effective $\mathbb{R}$-divisor, 
and $A'$ and $E''$ have no common irreducible components. 
Moreover, by choosing the resolution $g\colon Y \to X$ suitably 
(see \cite[Section 13]{bierstone-milman}), we may further assume that
\[
\Supp(A' + B') \cup \Supp g^{-1}_* \Delta
\cup \Exc(g) \cup \pi^{-1}(s)
\]
is a simple normal crossing divisor on $Y$.

Consider
\[
K_Y + \bigl((1-\varepsilon)\Gamma'' + \varepsilon B'\bigr)
+ \varepsilon A'
\sim_{\mathbb{Q},\pi}
g^*(K_X+\Delta) + E''
\] 
for $0<\varepsilon \ll 1$. 
Canceling common components on both sides yields
\[
K_Y + \Gamma \sim_{\mathbb{Q},\pi}
g^*(K_X+\Delta) + E,
\]
where $\Gamma$ and $E$ are effective $\mathbb{Q}$-divisors with no
common irreducible components, $E$ is $g$-exceptional, and
$\varepsilon A' \le \Gamma$.
Since $g^*(K_X+\Delta) \sim_{\mathbb{Q},\pi} 0$, we obtain
\[
K_Y + \Gamma \sim_{\mathbb{Q},\pi} E.
\]
By construction, $\Supp(\Gamma+E)$ is a simple normal crossing divisor.
Set $A := \varepsilon A'$ and $B := \Gamma - \varepsilon A'$ so that
$\Gamma = A + B$. 
We note that we can freely replace $A$ by a general effective
$\mathbb{Q}$-divisor
\[
\widetilde{A}\sim_{\mathbb{Q},\pi}A.
\]
More precisely, let $m$ be a sufficiently large and divisible positive
integer, and let $A^\dag$ be a general effective Cartier divisor satisfying
\[
A^\dag\sim_{\pi}mA.
\]
Then
\[
\widetilde{A}:=\frac{1}{m}A^\dag
\]
is a general effective $\mathbb{Q}$-divisor
$\mathbb{Q}$-linearly equivalent to $A$ over $\pi$. 
Moreover, we have 
\[
\Nklt(Y,\Gamma)
\subset \Nklt(Y,\Gamma'')
= \Nklt(Y,\Gamma')
\subset g^{-1}\Nklt(X,\Delta),
\]
where the first inclusion holds by $0<\varepsilon\ll 1$.
In particular, if $(X,\Delta)$ is kawamata log terminal,
then so is $(Y,\Gamma)$.

We write $F = \sum_{i=1}^k F_i$ for its decomposition into irreducible components. 
We note that we may slightly perturb the coefficients of those $F_i$ 
that appear in $B$ or $E$ without changing $\Nklt(Y,\Gamma)$, as follows. 
For sufficiently small positive rational numbers $\varepsilon_i$, 
we replace
$
B$ and $A$ by
$B + \sum_{i=1}^k \varepsilon_i F_i$ and  
$A - \sum_{i=1}^k \varepsilon_i F_i$, 
respectively. 
After subtracting the common irreducible components of 
$B + \sum_{i=1}^k \varepsilon_i F_i$ and $E$, 
and replacing 
$A - \sum_{i=1}^k \varepsilon_i F_i$ 
with a general effective $\mathbb{Q}$-divisor 
that is $\mathbb{Q}$-linearly equivalent to it over $S$, 
the non-klt locus $\Nklt(Y,\Gamma)$ remains unchanged.

Let $\sigma_1, \ldots, \sigma_m$ be generators of the maximal ideal
$\mathfrak m_s \subset \mathcal O_S$.
Since $\pi\colon Y \to S$ factors through the blow-up
$\mu\colon S' \to S$ of $S$ along $\mathfrak m_s$, there exists an
effective Cartier divisor $M$ on $Y$ such that
\[
\pi^{-1}\mathfrak m_s\cdot \mathcal O_Y=\mathcal O_Y(-M),
\]
where $\Supp M=\pi^{-1}(s)$.
The pull-backs
$\pi^*\sigma_1,\ldots,\pi^*\sigma_m$
generate $\mathcal O_Y(-M)$.
Hence, by Bertini's theorem, we may choose sufficiently many general
effective Cartier divisors
$H_1,\ldots,H_l$ on $S$ passing through $s$, where $l\gg0$, such that,
setting
\[
G:=\frac{1}{2}\sum_{j=1}^{l}H_j,
\]
the divisor
\[
\Supp(\Gamma+E)\cup\Supp(\pi^*G)\cup\pi^{-1}(s)
\]
is a simple normal crossing divisor on $Y$.

For $t \in [0,1]$, set $\Delta_t := \Delta + t f^*G$.
Let $\Gamma_t$ and $E_t$ be the effective $\mathbb Q$-divisors
obtained from $\Gamma + t\pi^*G$ and $E$, respectively, 
by subtracting their common irreducible components so that
\[
K_Y + \Gamma_t \sim_{\mathbb{Q},\pi} E_t.
\]
Let $V_t$ be the closure of
\[
\Nklt(Y,\Gamma_t)\setminus\Nklt(Y,\Gamma).
\]
By construction, we have $F = V_1$.
Since we may slightly perturb the coefficients of those $F_i$ 
that appear in $B$ or $E$, 
there exist rational numbers
\[
0 = t_0 < t_1 < \cdots < t_k \le 1
\]
such that, after renumbering the $F_i$ if necessary,
\[
V_{t_i} = F_1 \cup \cdots \cup F_i
\]
for each $i$.
Moreover, for any sufficiently small $0 < \varepsilon \ll 1$,
\[
V_{t_i - \varepsilon} = V_{t_{i-1}}.
\]

We have already verified (1)--(5). 
Assertion (6) follows from Lemma~\ref{p-lem5.2} below. 
Finally, since $F_1$ is rationally chain connected by (6) and is smooth, 
it is rationally 
connected (see Theorem \ref{p-thm2.11}). This proves (7) and completes the proof of 
Theorem~\ref{p-thm5.1}.
\end{proof}

The nontrivial part of Theorem \ref{p-thm5.1} is (6). 
This follows from the following lemma, whose proof is new 
and plays a central role in this paper.

\begin{lem}\label{p-lem5.2}
We use the same notation as in Theorem~\ref{p-thm5.1}.
Let $F^\circ$ be an irreducible component of $F$.
Then, by~(5) of Theorem~\ref{p-thm5.1}, there exists
$\tau^\circ\in(0,1]$ such that $F^\circ$ is a log canonical center of
$(Y,\Gamma_{\tau^\circ})$.
With this notation, for any sufficiently small positive rational number
$\varepsilon$, $F^\circ$ is rationally chain connected modulo
\[
W^\circ
:=
F^\circ\cap\Nklt(Y,\Gamma_{\tau^\circ-\varepsilon}).
\]
\end{lem}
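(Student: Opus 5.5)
The plan is to reduce Lemma~\ref{p-lem5.2} to Lemma~\ref{p-lem4.8}, or to Lemma~\ref{p-lem4.6} when the relevant fibration is trivial. To do this we run a minimal model program over $S$ that isolates $F^\circ$ as the unique new log canonical center.

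Set $\tau:=\tau^\circ$ and choose $0<\varepsilon\ll1$. By (5), the component $F^\circ$ is the only component of $V_\tau$ not already contained in $V_{\tau-\varepsilon}$, and its coefficient in $\Gamma_\tau$ is exactly one. Consider the pair $(Y,\Theta)$ with
\[
\Theta:=\Gamma_{\tau-\varepsilon}^{b}+(1-\eta)F^\circ \quad\text{or directly}\quad \Theta:=\Gamma_\tau^{b}.
\]
Here $\Theta^{nb}$ is handled as in Lemma~\ref{p-lem4.6}: $\Supp\Gamma_\tau^{nb}$ lies in $\Nklt(Y,\Gamma_{\tau-\varepsilon})\cup F^\circ$. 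Then $(Y,\Gamma_\tau^b)$ is a dlt pair, since $\Supp(\Gamma+E)\cup\Supp\pi^*G\cup\pi^{-1}(s)$ is snc. We have
\[
K_Y+\Gamma_\tau^b\sim_{\mathbb Q,\pi}E_\tau-\Gamma_\tau^{nb}+(\text{pull-back of } \tau G),
\]
and $A\le\Gamma_\tau^b$ is $\pi$-ample. Using the minimal model program with scaling over $S$ (\cite{fujino-minimal}), which applies because $A$ is $\pi$-ample and hence the boundary is big, we run a $(K_Y+\Gamma_\tau^b)$-MMP over $S$ near $\pi^{-1}(s)$. This contracts $E_\tau$, which is $g$-exceptional and hence very exceptional over $X$. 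Since $K_X+\Delta_\tau\sim_{\mathbb Q,f}0$, the MMP terminates with a model $Y\dashrightarrow X^m$ on which $K_{X^m}+\Gamma^m_\tau\sim_{\mathbb Q,S}-(\text{effective, supported on }\Nklt)$. We must check that $F^\circ$ is not contracted. This holds because its discrepancy with respect to $(X,\Delta_\tau)$ is exactly $-1$, and the MMP preserves lc centers that are not contained in $\Supp E_\tau$; we arrange this by perturbing the coefficients as in the proof of Theorem~\ref{p-thm5.1}.

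Next we extract the fibration. On $X^m$ we run, or use, a further $-(K+\Gamma^m_{\tau-\varepsilon})$-type contraction. Because $K+\Gamma_{\tau-\varepsilon}\sim_{\mathbb Q}(\text{stuff})-\varepsilon\pi^*G$ and $\pi^*G$ contains $F^\circ$ with positive coefficient, $-(K+\Gamma^m_{\tau-\varepsilon})$ restricted to the strict transform $F^m$ of $F^\circ$ behaves like an effective multiple of $-F^m|_{F^m}$. Adjunction $(K+\Gamma^m_\tau)|_{F^m}=K_{F^m}+\Delta_{F^m}$ then gives a pair on the normal projective variety $F^m$. This pair satisfies one of two conditions:
\begin{itemize}
\item[(a)] it satisfies conditions (a)--(c) of Lemma~\ref{p-lem4.6}, with $\kappa\ge0$ coming from $\sim_{\mathbb Q,S}$-triviality and $\kappa\le0$ because $F^\circ$ lies over the point $s$; or
\item[(b)] after the MMP it carries a Mori-fibre/contraction structure $F^m\to W$ with anti-ample log canonical divisor relative to $W$, and the nklt locus dominates $W$, which is the setting of Lemma~\ref{p-lem4.8}.
\end{itemize}
Condition (c) of Lemma~\ref{p-lem4.6} is supplied by $A|_{F^m}$, which is big because $A$ is $\pi$-ample and hence ample on the fibre.

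Finally we apply Lemma~\ref{p-lem4.6} or Lemma~\ref{p-lem4.8} to a log resolution of $F^m$ dominated by $F^\circ$; we may choose $Y$ so that this holds, by~(1). We then pass back to $F^\circ$ using Lemma~\ref{p-lem2.14} and Lemma~\ref{p-lem2.12}. This requires identifying the nklt locus of the adjoint pair on the resolution with $F^\circ\cap\Nklt(Y,\Gamma_{\tau-\varepsilon})$, which follows from the snc condition and the ordering in~(5).

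The main obstacle is the MMP step: we must ensure that $F^\circ$ survives, and that the adjunction pair on $F^m$ has $\kappa(K_{F^m}+\Delta^b_{F^m})\le0$ while $\kappa\ge0$ for the full divisor. This must be done in the analytic setting, where we need projectivity of $F^m$ (it lies over $s$, so that is fine) and Fujino's MMP only over relatively compact neighborhoods. If this step resists, I would first try the special case where $K_Y+\Gamma_\tau^b$ is already $\pi$-nef over $s$ and adapt the general case via the basepoint-free Lemma~\ref{p-lem2.7}.
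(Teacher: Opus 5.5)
The overall shape of your plan matches the paper: a $(K_Y+\Gamma^b_{\tau^\circ})$-MMP followed by Lemma~\ref{p-lem4.6} or Lemma~\ref{p-lem4.8}. However, two essential steps are missing or wrong.

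\textbf{First gap: $F^\circ$ need not survive the MMP.} Your claim that it does is false in general, and the perturbation in the proof of Theorem~\ref{p-thm5.1} does not help: it only orders the times at which the $F_i$ become lc centers.
\begin{itemize}
\item Since $K_Y+\Gamma^b_{\tau^\circ}\sim_{\mathbb Q,\pi}E_{\tau^\circ}-\Gamma^{nb}_{\tau^\circ}$ and $F^\circ$ may meet $\Supp\Gamma^{nb}_{\tau^\circ}$, curves on $F^\circ$ can be $(K_Y+\Gamma^b_{\tau^\circ})$-negative. So $F^\circ$ can be contracted by a divisorial contraction.
\item The paper runs the MMP over $X$. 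It stops either when $K_{Y_m}+(\Gamma^b_{\tau^\circ})_m$ is nef over $X$, or just before a divisorial contraction that would contract $(F^\circ)_m$.
\item It is exactly that divisorial contraction, restricted to $(F^\circ)_m$, that gives the fibration $(F^\circ)_m\to W$ needed for Lemma~\ref{p-lem4.8}: $\dim W<\dim (F^\circ)_m$ and $-(K_{(F^\circ)_m}+\Phi^b)$ is relatively ample. The non-klt locus dominates $W$ because $K_{(F^\circ)_m}+\Phi^b\sim_{\mathbb Q}(E_{\tau^\circ})_m|_{(F^\circ)_m}-\Phi^{nb}$.
\item Your ``further $-(K+\Gamma^m_{\tau-\varepsilon})$-type contraction'' is never defined. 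Its heuristic also fails: $\pi^*G$ is $\mathbb Q$-linearly trivial over $S$, so it restricts trivially to $F^m$.
\item In the nef case, $\kappa(K_{(F^\circ)_m}+\Phi^b)\le 0$ does not follow from $F^\circ$ lying over $s$. It comes from the negativity lemma for the birational morphism $Y_m\to X$ (one reason to run the MMP over $X$). Nefness of $(E_{\tau^\circ})_m-(\Gamma^{nb}_{\tau^\circ})_m$ over $X$ forces $(E_{\tau^\circ})_m=0$, hence $K_{(F^\circ)_m}+\Phi\sim_{\mathbb Q}0$.
\item Your appeal to very exceptionality is misplaced. It concerns an MMP over $X$ for a divisor $\equiv_X E_{\tau^\circ}$. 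You have neither: your MMP is over $S$, and the divisor is $E_{\tau^\circ}-\Gamma^{nb}_{\tau^\circ}$.
\end{itemize}

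\textbf{Second gap: the transfer back to $F^\circ$.} Your resolution $V$ resolves $(F^\circ)_m$, so $\Nklt(V,\Delta_V)$ is computed from $((F^\circ)_m,\Phi)$ or $((F^\circ)_m,\Phi^b)$. The snc condition and the ordering in~(5) only give $\Nklt(F^\circ,\Psi)=F^\circ\cap\Nklt(Y,\Gamma_{\tau^\circ-\varepsilon})$ on $Y$. They say nothing about how discrepancies change under the MMP. You also cannot choose $Y$ afterwards, since $(F^\circ)_m$ is built from $Y$. Instead, take $V$ to be a common resolution $p\colon V\to F^\circ$, $q\colon V\to (F^\circ)_m$, and prove a discrepancy comparison in each case:
\begin{itemize}
\item Nef case: $K_Y+\Gamma_{\tau^\circ}-E_{\tau^\circ}\sim_{\mathbb Q,\pi}0$ and $(E_{\tau^\circ})_m=0$ make the MMP crepant. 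Hence $a(\nu,F^\circ,\Psi-E_{\tau^\circ}|_{F^\circ})=a(\nu,(F^\circ)_m,\Phi)$, so $a(\nu,F^\circ,\Psi)\le -1$ for every component $\nu$ of $\Delta_V^{\ge 1}$.
\item Contraction case: the MMP is $(K+\Gamma^b_{\tau^\circ})$-negative, which gives $a(\nu,F^\circ,\Psi^b)\le a(\nu,(F^\circ)_m,\Phi^b)$.
\end{itemize}
Either way $p(\Nklt(V,\Delta_V))\subset\Nklt(F^\circ,\Psi)$. You then conclude by pushing rational chains forward along the surjection $p$. Lemma~\ref{p-lem2.14} plays no role, since $p$ need not be a sequence of smooth blow-ups.
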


\begin{proof}[Proof of Lemma \ref{p-lem5.2}]
We run a $(K_Y+\Gamma^b_{\tau^\circ})$-minimal model program over $X$ and then apply Lemma \ref{p-lem4.6} or Lemma \ref{p-lem4.8}. 

\setcounter{step}{0}
\begin{step}[Reduction to the non-klt locus on $F^\circ$]\label{p-step5.2.1}
Observe that 
\[
K_Y + \Gamma_{\tau^\circ} \sim_{\mathbb{Q},\pi} E_{\tau^\circ}, 
\]
and $F^\circ$ is an irreducible component of $\Gamma_{\tau^\circ}$. By adjunction, we write
\[
K_{F^\circ} + \Psi := (K_Y + \Gamma_{\tau^\circ})|_{F^\circ}, \quad \text{i.e., } \Psi = (\Gamma_{\tau^\circ} - F^\circ)|_{F^\circ}.
\] 
Then for any sufficiently small positive rational number $\varepsilon$, we have
\[
\Nklt(F^\circ, \Psi) = \Nklt(F^\circ, (\Gamma_{\tau^\circ-\varepsilon})|_{F^\circ}) = F^\circ \cap \Nklt(Y, \Gamma_{\tau^\circ-\varepsilon}).
\] 
Hence it suffices to show that $F^\circ$ is rationally chain connected modulo $\Nklt(F^\circ, \Psi)$.
\end{step}

\begin{step}[Running a $(K_Y + \Gamma_{\tau^\circ}^b)$-MMP over $X$]\label{p-step5.2.2}
Recall that 
\[
K_Y + \Gamma_{\tau^\circ}^b \sim_{\mathbb{Q},\pi} E_{\tau^\circ} - \Gamma_{\tau^\circ}^{nb},
\]
and that $A \le \Gamma_{\tau^\circ}^b$ by construction. 

Take an open neighborhood $U$ of $s$ and a Stein compact subset 
$W \subset S$ containing $U$ such that $\Gamma(W,\mathcal O_S)$ 
is noetherian. 
Then, by \cite[Lemma 9.4]{fujino-minimal}, we can run a 
$(K_Y + \Gamma_{\tau^\circ}^b)$-minimal model program over $X$ 
around $f^{-1}(W)$ with ample scaling, obtaining a finite sequence 
of flips and divisorial contractions:
\[
Y =: Y_0 \dashrightarrow Y_1 \dashrightarrow \cdots \dashrightarrow Y_m,
\]
such that the strict transform $(F^\circ)_m$ of $F^\circ$ remains a divisor on $Y_m$. We note that, at each step of the minimal model program,
it is necessary to shrink the spaces further around $W$. For any $\mathbb R$-divisor $D$ on $Y$, we denote by $(D)_m$ 
its pushforward on $Y_m$.

We distinguish two cases:
\begin{itemize}
\item[(A)] $K_{Y_m} + (\Gamma_{\tau^\circ}^b)_m$ is nef over $f^{-1}(W)$, or
\item[(B)] $Y_m$ admits a $(K_{Y_m} + (\Gamma_{\tau^\circ}^b)_m)$-negative divisorial contraction that contracts $(F^\circ)_m$.
\end{itemize}

By adjunction, set
\[
K_{(F^\circ)_m} + \Phi := (K_{Y_m} + (\Gamma_{\tau^\circ})_m)|_{(F^\circ)_m}.
\] 
Then one verifies that 
\[
K_{(F^\circ)_m} + \Phi^b = (K_{Y_m} + (\Gamma_{\tau^\circ}^b)_m)|_{(F^\circ)_m}.
\] 
Moreover, $((F^\circ)_m, \Phi^b)$ is divisorial log terminal because $(Y_m, (\Gamma_{\tau^\circ}^b)_m)$ is divisorial log terminal.  

Furthermore, $(K_{Y_m} + (\Gamma_{\tau^\circ})_m)$ is $\mathbb Q$-linearly equivalent to $(E_{\tau^\circ})_m$ over $S$, and $(E_{\tau^\circ})_m$ shares no components with $(F^\circ)_m$, so
\[
\kappa((F^\circ)_m, K_{(F^\circ)_m} + \Phi) \ge 0.
\] 
Since $\Gamma_{\tau^\circ}^b \ge A$, $\Phi^b$ contains an effective big $\mathbb{Q}$-Cartier $\mathbb{Q}$-divisor.
\end{step}

\begin{step}[Case (A)]\label{p-step5.2.3}
In this step, we treat Case~(A).

In Case (A), the negativity lemma implies $(E_{\tau^\circ})_m = 0$ 
over $U$, hence
\[
K_{(F^\circ)_m} + \Phi \sim_{\mathbb{Q}} 0 \quad \text{and} \quad 
\kappa((F^\circ)_m, K_{(F^\circ)_m} + \Phi^b) \le \kappa((F^\circ)_m, K_{(F^\circ)_m} + \Phi) = 0.
\] 
Thus $((F^\circ)_m, \Phi)$ satisfies conditions (a)--(c) of Lemma \ref{p-lem4.6}.

Let $q \colon V \to (F^\circ)_m$ be a resolution such that
\begin{equation}\label{p-eq5.1}
K_V + \Delta_V = q^*(K_{(F^\circ)_m} + \Phi) + E^\sharp,
\end{equation}
as in Lemma~\ref{p-lem4.6}. 
Then Lemma~\ref{p-lem4.6} implies that $V$ is rationally chain connected modulo $\Nklt(V, \Delta_V)$.

We may assume that $V$ is a common resolution of $F^\circ$ and $(F^\circ)_m$:
\[
\xymatrix{
& V \ar[dl]_p \ar[dr]^q & \\
F^\circ && (F^\circ)_m.
}
\]

Let $\nu$ be any irreducible component of $\Delta_V^{\ge 1}$. 
Then
\[
a(\nu, V, \Delta_V - E^\sharp)
=
a(\nu, V, \Delta_V)
\le -1.
\]
Hence, by \eqref{p-eq5.1}, we obtain
\[
a(\nu, (F^\circ)_m, \Phi)\le -1.
\]

Since
\[
K_{(F^\circ)_m}+\Phi
=
\left(K_{Y_m}+(\Gamma_{\tau^\circ})_m\right)|_{(F^\circ)_m}
\sim_{\mathbb Q} 0,
\]
it follows that
\[
a(\nu, F^\circ, \Psi-(E_{\tau^\circ})|_{F^\circ})
=
a(\nu, (F^\circ)_m, \Phi)
\le -1.
\]
Therefore
\[
a(\nu, F^\circ, \Psi)\le -1.
\]
This shows that
\[
p(\Nklt(V, \Delta_V))
=
p(\Supp \Delta_V^{\ge 1})
\subset \Nklt(F^\circ, \Psi).
\]
Hence $F^\circ$ is rationally chain connected modulo $\Nklt(F^\circ, \Psi)$.
\end{step}

\begin{step}[Case (B)]\label{p-step5.2.4}
In this step, we treat Case~(B).

In Case (B), $(F^\circ)_m$ is contracted by a $(K_{Y_m} + (\Gamma^b_{\tau^\circ})_m)$-negative divisorial contraction. 
By the $(K_{Y_m}+(\Gamma^b_{\tau^\circ})_m)$-negative divisorial contraction 
contracting $(F^\circ)_m$, we obtain a projective surjective morphism 
\[
(F^\circ)_m \to W
\]
with connected fibers such that $-(K_{(F^\circ)_m}+\Phi^b)$ is ample over $W$ and 
$\dim W < \dim (F^\circ)_m$. 
Since
\[
K_{(F^\circ)_m}+\Phi^b
\sim_{\mathbb Q}
(E_{\tau^\circ})_m|_{(F^\circ)_m}-\Phi^{nb},
\] 
we obtain that $\Supp \Phi^{nb}$ dominates $W$. Hence, 
the non-klt locus $\Nklt((F^\circ)_m, \Phi^b)$ dominates $W$.

Let $q \colon V \to (F^\circ)_m$ be a resolution such that
\[
K_V + \Delta_V
=
q^*(K_{(F^\circ)_m} + \Phi^b) + E^\flat,
\]
as in Lemma~\ref{p-lem4.8}. 
Then Lemma~\ref{p-lem4.8} implies that $V$ is rationally chain connected modulo $\Nklt(V, \Delta_V)$.

We may assume that $V$ is a common resolution of $F^\circ$ and $(F^\circ)_m$:
\[
\xymatrix{
& V \ar[dl]_p \ar[dr]^q & \\
F^\circ && (F^\circ)_m.
}
\]

By the negativity lemma, for any divisor $\nu$ over $F^\circ$ we have
\[
a(\nu, (F^\circ)_m, \Phi^b)
\ge
a(\nu, F^\circ, \Psi^b).
\] 
Hence, if $a(\nu, V, \Delta_V)\leq -1$, then
\[
a(\nu, F^\circ, \Psi^b)
\le
a(\nu, (F^\circ)_m, \Phi^b)
=
a(\nu, V, \Delta_V-E^\flat)
=
a(\nu, V, \Delta_V)
\le
-1.
\]
It follows that
\[
p(\Nklt(V, \Delta_V))
\subset
\Nklt(F^\circ, \Psi^b)
=
\Nklt(F^\circ, \Psi).
\]
Thus $F^\circ$ is rationally chain connected modulo $\Nklt(F^\circ, \Psi)$.
\end{step}

This completes the proof of Lemma \ref{p-lem5.2}.
\end{proof}

Here, we make a remark on the treatment of our Theorem \ref{p-thm5.1}, 
which is essentially \cite[Theorem 5.1]{hacon-mckernan}, 
and point out the differences from the way Hacon and 
M\textsuperscript{c}Kernan 
handle the corresponding result.

\begin{rem}\label{p-rem5.3}
Although we work with complex analytic spaces rather than algebraic
varieties, Theorem~\ref{p-thm5.1} is essentially the same as
\cite[Theorem~5.1]{hacon-mckernan}.
However, we do not address the statement concerning the Kodaira
dimension in \cite[Theorem~5.1~(4)]{hacon-mckernan}.
The proof of \cite[Theorem~5.1~(4)]{hacon-mckernan} relies on a highly
nontrivial extension theorem; see
\cite[Theorem~5.2]{hacon-mckernan} and
\cite[Corollary~3.17]{hacon-mckernan-b} for details.
Since this extension result is technically demanding even for
specialists, we avoid using it in the present paper.

Instead, we prove Theorem~\ref{p-thm5.1} by running the minimal model
program (see \cite{fujino-minimal}).
Of course, the existence of flips itself ultimately depends on
extension theorems, and therefore our approach does not yield a
substantial simplification of the overall theory.
Nevertheless, we find the minimal model program more accessible and
conceptually transparent than working directly with the highly
technical extension results.
\end{rem}

\begin{rem}\label{p-rem5.4}
Roughly speaking, Hacon and M\textsuperscript{c}Kernan directly prove that 
$\kappa _{\sigma} (F^\circ, K_{F^\circ}+\Psi^b)\leq 0$ by using their difficult extension theorem (see \cite[Corollary 3.17]{hacon-mckernan-b}). 
They say that it is the tricky part of their proof of Theorem \ref{p-thm1.1} (see \cite[Theorem 1.2]{hacon-mckernan}). In contrast, in our proof of Lemma \ref{p-lem5.2}, 
it may happen that
\[
\kappa(F^\circ, K_{F^\circ} + \Psi^b)
>
\kappa\bigl((F^\circ)_m, K_{(F^\circ)_m} + \Phi^b\bigr).
\]
Thus our method does not allow us to directly deduce 
$\kappa(F^\circ, K_{F^\circ} + \Psi^b) \le 0$. 
Instead, we prove that a resolution of 
$((F^\circ)_m, \Phi)$ (resp.~$((F^\circ)_m, \Phi^b)$) 
is rationally chain connected modulo the non-klt locus. 
It then follows that the same property holds for 
$(F^\circ, \Psi)$ (resp.~$(F^\circ, \Psi^b)$). 
\end{rem}

\section{Proofs of the Results}\label{p-sec6}

In this section, we prove the results stated in Section~\ref{p-sec1}.

\begin{proof}[Proof of Theorem \ref{p-thm1.1}]
By taking the Stein factorization, we may assume that 
$f_*\mathcal O_X \simeq \mathcal O_S$. 
Fix a point $P \in S$ and shrink $S$ around $P$ 
without further comment. 
Since $-(K_X+\Delta)$ is $f$-semiample by assumption, 
we take an effective $\mathbb{R}$-divisor $D_1$ such that 
\[
-(K_X+\Delta) \sim_{\mathbb{R},f} D_1
\]
and 
\[
\Nklt(X,\Delta) = \Nklt(X,\Delta_1),
\]
where $\Delta_1 := \Delta + D_1$. 
Since $\Delta_1 \sim_{\mathbb{R},f} -K_X$ and is $f$-big, 
we write
\[
\Delta_1 \sim_{\mathbb{Q},f} A + B,
\]
where $A$ is an effective $f$-ample $\mathbb{Q}$-divisor 
and $B$ is an effective $\mathbb{R}$-divisor. 
Set
\[
\Delta_2 := (1-\varepsilon)\Delta_1 + \varepsilon B
\]
for some $0 < \varepsilon \ll 1$. 
Then
\[
-(K_X+\Delta_2) \sim_{\mathbb{R},f} \varepsilon A,
\]
which is $f$-ample, and
\[
\Nklt(X,\Delta_2)
\subset 
\Nklt(X,\Delta_1)
=
\Nklt(X,\Delta).
\]
By slightly perturbing $\Delta_2$, we obtain an effective 
$\mathbb{Q}$-divisor $\Delta_3$ such that 
$-(K_X+\Delta_3)$ is $f$-ample and 
\[
\Nklt(X,\Delta_3)
=
\Nklt(X,\Delta_2)
\subset
\Nklt(X,\Delta).
\]
Take an effective $\mathbb{Q}$-divisor $D_2$ such that 
\[
-(K_X+\Delta_3) \sim_{\mathbb{Q},f} D_2
\]
and
\[
\Nklt(X,\Delta_4)
=
\Nklt(X,\Delta_3)
\subset
\Nklt(X,\Delta),
\]
where $\Delta_4 := \Delta_3 + D_2$. 
Set $\Delta^\dag := \Delta_4$. 
Then
\[
K_X+\Delta^\dag \sim_{\mathbb{Q},f} 0,
\]
$\Delta^\dag$ is $f$-big, and 
\[
\Nklt(X,\Delta^\dag) \subset \Nklt(X,\Delta).
\] 
Therefore, we can apply Theorem \ref{p-thm5.1}. 
By Lemma \ref{p-lem2.12}, we may replace $Y$ by a higher model if necessary. By Theorem \ref{p-thm5.1} (6), 
$\pi^{-1}(P)$ is rationally chain connected modulo 
$g^{-1}\Nklt(X,\Delta^\dag)$. 
Consequently, it is rationally chain connected modulo
$g^{-1}\Nklt(X,\Delta)$.
\end{proof}

\begin{proof}[Proof of Corollary \ref{p-cor1.2}]
Throughout this proof, we shrink $X$ around a point $P$ 
without further comment. By perturbing the coefficients of $\Delta$, 
we may assume that $(X,\Delta)$ is kawamata log terminal 
and that $K_X+\Delta$ is $\mathbb{Q}$-Cartier. 
We apply Theorem \ref{p-thm1.1} to the identity map 
$f\colon X \to X$. 
Then any fiber of $g\colon Y \to X$ is rationally chain connected. 
This proves (i). 

For (ii), if $X$ is rationally connected, then it is clearly 
rationally chain connected. 
Thus, it suffices to prove the ``only if'' part. 
Let $g\colon Y \to X$ be a resolution of singularities. 
By (i), $g$ has rationally chain connected fibers, 
and hence $Y$ is rationally chain connected. 
Since $Y$ is smooth, it follows that $Y$ is rationally connected 
(see Theorem \ref{p-thm2.11}). 
Therefore, $X$ is rationally connected. 
This completes the proof.
\end{proof}

\begin{proof}[Proof of Corollary \ref{p-cor1.3}]
We apply Corollary \ref{p-cor1.2} (i) to 
$p\colon \Gamma \to X$. 
Then $p^{-1}(x)$ is rationally chain connected. 
Thus, by Lemma \ref{p-lem2.12}, 
$q(p^{-1}(x))$ is rationally chain connected. 
\end{proof}

\begin{proof}[Proof of Corollary \ref{p-cor1.4}]
Let $\Gamma$ be the graph of 
$f \colon X \dashrightarrow Y$, 
and denote by $p \colon \Gamma \to X$ the natural projection. 
By Corollary~\ref{p-cor1.2} (i), 
every positive-dimensional fiber of $p$ 
is rationally chain connected. 
Since $Y$ contains no rational curves, 
every fiber of $p$ must be zero-dimensional. 
Hence $p$ is an isomorphism, 
and therefore $f$ is a morphism.
\end{proof}

\begin{proof}[Proof of Theorem \ref{p-thm1.6}]
By perturbing $\Delta$ slightly, we may assume that
$-(K_X+\Delta)$ is an ample $\mathbb{Q}$-divisor while preserving
$\Nklt(X,\Delta)$.
Let $D$ be a general effective ample $\mathbb{Q}$-divisor with
\[
D\sim_{\mathbb Q}-(K_X+\Delta),
\]
and set $\Delta^\dag:=\Delta+D$.
Then
\[
K_X+\Delta^\dag\sim_{\mathbb Q}0,
\]
$(\Delta^\dag)^b\ge D$, and
\[
\Nklt(X,\Delta)=\Nklt(X,\Delta^\dag).
\]
By applying Lemma \ref{p-lem4.6} to a resolution of $X$, we conclude that
$X$ is rationally chain connected modulo $\Nklt(X,\Delta)$.
If $(X,\Delta)$ is kawamata log terminal, then
$X$ is rationally connected by Corollary \ref{p-cor4.7}.
\end{proof}

\begin{proof}[Proof of Theorem \ref{p-thm1.7}]
This theorem is a corollary of Theorem \ref{p-thm1.6}. 
In the algebraic setting, the proof is given in 
\cite[Section 13]{fujino-hyp}. 
Note that Theorem \ref{p-thm1.1}, and therefore Theorem \ref{p-thm5.1}, are not needed in this proof.
In the complex analytic setting, the same argument applies, 
since the theory of quasi-log structures is established in 
\cite{fujino-quasi-log}. 
\end{proof}

\begin{proof}[Proof of Corollary \ref{p-cor1.8}]
By taking the Stein factorization, we may assume that 
$f_*\mathcal O_X \simeq \mathcal O_S$. 
Throughout this proof, we fix a point $s \in S$ 
and shrink $S$ around $s$ without further comment. 
If $(X,\Delta)$ is log canonical, 
then $[X,K_X+\Delta]$ naturally becomes a quasi-log canonical pair. 
Hence, (2) is a special case of Theorem \ref{p-thm1.7}. 
For (1), as in the proof of Lemma \ref{p-lem2.7}, 
we construct an effective $\mathbb{R}$-divisor $\Theta$ 
such that $(X,\Theta)$ is kawamata log terminal and 
$-(K_X+\Theta)$ is $\pi$-ample. 
Thus, (1) follows from (2). 
This completes the proof. 
\end{proof}

For the reader's convenience, we present an alternative proof 
of Corollary \ref{p-cor1.8} (1) using Theorem \ref{p-thm1.1}. 
This argument follows the strategy of 
Hacon and M\textsuperscript{c}Kernan in \cite{hacon-mckernan}. 

\begin{proof}[Proof of Corollary \ref{p-cor1.8} (1) as an application of 
Theorem \ref{p-thm1.1}] 
As usual, we fix a point $P \in S$ and shrink $S$ around $P$ 
without further comment. 
By the basepoint-free theorem (see Lemma \ref{p-lem2.7}), 
$-(K_X+\Delta)$ is $f$-semiample. 
Since $\Nklt(X,\Delta)=\emptyset$, 
Theorem \ref{p-thm1.1} together with Lemma \ref{p-lem2.12} 
implies that $f^{-1}(P)$ is rationally chain connected. 
\end{proof}

%%%%%%%%%%%%%%%%%%%%

\end{document}